\DeclareFontFamily{OT1}{pzc}{}
\DeclareFontShape{OT1}{pzc}{m}{it}{<-> s * [0.900] pzcmi7t}{}
\DeclareMathAlphabet{\mathscr}{OT1}{pzc}{m}{it}
\newcommand{\arjun}[1]{\opt{notes}{\todo[size=\tiny,backgroundcolor=white,noline]{\textcolor{Mahogany}{arjun: #1}}}}
\newcommand{\notes}[1]{\opt{notes}{\textcolor{OliveGreen}{#1}}}
\newcommand{\Secref}[1]{Section~\ref{#1}}
\newcommand{\Thmref}[1]{Theorem~\ref{#1}}
\newcommand{\Propref}[1]{Prop.~\ref{#1}}
\newcommand{\R}{\mathbb{R}} 
\newcommand{\Z}{\mathbb{Z}} 
\newcommand{\Q}{\mathbb{Q}} 
\newcommand{\Prob}{\mathbb{P}} 
\newcommand{\w}{\omega}
\newcommand{\norm}[2]{\left\vert{#1}\right\vert_{#2}} 
\newcommand{\grad}{\nabla} 
\newcommand{\AND}{\textrm{ and }} 
\newcommand{\OR}{\textrm{ or }}
\newcommand{\supp}{\operatorname{supp}} 
\newcommand{\almostsurely}{\textrm{a.s}}
\renewcommand{\limsup}{\varlimsup}
\renewcommand{\liminf}{\varliminf}
\DeclareMathOperator{\argmax}{argmax}
\newcommand{\Uniform}{\operatorname{Uniform}}
\newcommand{\Bernoulli}{\operatorname{Bernoulli}}
\newcommand{\la}{\langle}
\newcommand{\ra}{\rangle}
\newcommand{\E}{\mathbb{E}}
\renewcommand{\emph}[1]{\textbf{#1}}
\newcommand{\origin}{\mathbf{0}}
\newcommand{\vx}[0]{\vec{X}}
\newcommand{\mU}{\mathcal{U}}
\newcommand{\riU}{\operatorname{ri}(\mathcal{U})}
\newcommand{\gpp}[2][\beta]{g_{pp}(#1,#2)}
\newcommand{\overlinegpp}{\overline{g}_{pp}^{\beta}}
\newcommand{\gpl}[2][\beta]{g_{pl}(#1,#2)}
\newcommand{\mgf}{\mathbb{M}}
\newcommand{\mgfrw}{M}
\newcommand{\logmgf}{\mathbb{\Lambda}}
\newcommand{\logmgfrw}{\Lambda_p}
\newcommand{\gammatwo}{\mathbb{\Gamma}_2}
\newcommand{\entropy}{\mathcal{H}}
\newcommand{\relativeHqp}{\entropy(\mathbb{Q}_\beta|\mathbb{P})}
\newcommand{\weak}{\operatorname{WEAK}}
\newcommand{\strong}{\operatorname{STRONG}}
\newcommand{\rwF}{F}
\newcommand{\weightsF}{\mathcal{F}}
\theoremstyle{plain}
\newtheorem{thm}{Theorem}[section]
\newtheorem*{thm*}{Theorem}
\newtheorem{lem}[thm]{Lemma}
\newtheorem*{lem*}{Lemma}
\newtheorem{prop}[thm]{Proposition}
\newtheorem*{prop*}{Proposition}
\theoremstyle{definition}
\newtheorem{define}[thm]{Definition}
\newtheorem*{question*}{Question}
\newtheorem*{example*}{Example}
\newtheorem{claim}[thm]{Claim}
\theoremstyle{remark}
\newtheorem{rem}[thm]{Remark}
\newtheorem*{rem*}{Remark}
\title{On the phase diagram of the polymer model}
\author[A.\,Krishnan]{Arjun Krishnan}
\author[S.\,Mkrtchyan]{Sevak Mkrtchyan}
\author[S.\,Neville]{Scott Neville}
\address{Arjun Krishnan\\ University of Rochester\\  Department of Mathematics\\ Rochester, NY 14627\\ USA.}
\email{arjun@shirleyarjun.net}
\urladdr{https://people.math.rochester.edu/faculty/akrish11/}
\address{Sevak Mkrtchyan\\ University of Rochester\\  Department of Mathematics\\ Rochester, NY 14627\\ USA.}
\email{sevak.mkrtchyan@rochester.edu}
\urladdr{https://people.math.rochester.edu/faculty/smkrtchy/}
\address{Scott Neville\\ University of Michigan\\ Department of Mathematics\\ Ann Arbor, MI 48109\\USA.}
\email{nevilles@umich.edu}
\thanks{A.\,Krishnan was partially supported by a Simons Collaboration grant 638966.}
\thanks{S.\,Mkrtchyan was partially supported by a Simons Collaboration grant 422190.}
\thanks{S.\,Neville was partially supported by a NSF RTG 1840234}
\keywords{Phase diagram, directed polymers, weak and strong disorder, localization, central limit theorem}  
\subjclass[2020]{60K35, 60K37} 
\DeclareSymbolFontAlphabet{\amsmathbb}{AMSb}%
\begin{document}
\begin{abstract}
In dimensions 3 or larger, it is a classical fact that the directed polymer model has two phases: Brownian behavior at high temperature, and non-Brownian behavior at low temperature. We consider the response of the polymer to an external field or tilt, and show that at fixed temperature, the polymer has Brownian behavior for some fields and non-Brownian behavior for others. In other words, the external field can \emph{induce} the phase transition in the directed polymer model.
\end{abstract}
\maketitle

\setcounter{tocdepth}{2}
\tableofcontents

\section{Introduction}
\label{sec:introduction}
The directed polymer model was introduced by \citet{huse_henley_1985} to study the influence of inhomogeneities on interfaces in the two dimensional Ising model. Nowadays, it is more commonly viewed as a model of polymer chains in a solution with impurities. The equilibrium shape of the polymer is influenced by both the (attractive) impurities and by the random thermal fluctuations in the medium. The strength of the thermal fluctuations relative to the impurities is controlled by an inverse temperature parameter $\beta$.

The classical paradigm in this model is the existence of two distinct phases or behavior regimes. In the ``weak disorder'' phase, the thermal fluctuations overwhelm the influence of the impurities, and the polymer behaves diffusively like a random walk or Brownian motion. When the impurities dominate and the polymer is nondiffusive, it is said to be in ``strong disorder''. In fact, in one space dimension, the polymer model is always in strong disorder, and has the universal KPZ scaling~\cite{MR2917766} and fluctuation behavior~\cite{MR3116323,krishnan_universality_2016,MR4341071}. 

The transition between phases may also be described in terms of the free energy $F(\beta)$ of the polymer. The critical temperature $\beta_c \in [0,\infty]$ is the smallest $\beta$ at which $F(\beta)$ fails to be an analytic function. When $\beta<\beta_c$, the polymer is said to be in the high-temperature phase, and when $\beta>\beta_c$, in the low temperature phase. In one or two space dimensions, $\beta_c$ is known to be $0$. In dimensions three or higher, the polymer experiences a sharp phase transition at some $\beta_c > 0$. Under some technical restrictions, the weak/strong-disorder phases and the high/low-temperature phases are now known to be synonymous. 

In other classical models displaying a phase transition, there are two functions called the Gibbs and Helmholtz free energies that are extremely useful in not only defining phase transitions like in the previous paragraph, but also in extracting averages of various statistical quantities under the Gibbs measure. For example, in the Ising model, the Helmholtz free energy $F^H(\beta,h)$ is a function of $\beta$ and $h$, the external field. Its convex dual in the $h$ parameter is the Gibbs free energy $F^G(\beta,m)$, where $m$ is the average magnetization under the Gibbs measure. In $d=1$, the model was solved by Ising \citep{ising_beitrag_1925} in his thesis, and shows no phase transition. However, in $d=2$, the model was famously solved by Onsager \citep{MR0010315} when $h=0$, and does display a phase transition. The phase transition appears as a singularity in the derivative of the free energy $\lim_{h \to 0^\pm} \partial F^H(\beta,h) = \pm m_0$, where $m_0$,  the average zero-field magnetization is only nonzero below the critical temperature. This is a prototypical example of a symmetry breaking phase transition. The Ising model remains more-or-less unsolved for $h \neq 0$ and $d > 2$, and describing its two-parameter free energies and phase diagram is an open problem of considerable interest.

The directed polymer model and the closely related random walk in random environment (RWRE) are special cases of the more general random walk in random potential (RWRP). There, typically 2 free energies are considered  ---the point-to-point and point-to-level free energies--- that have an additional directional and field parameter respectively. The field parameter appears in the Hamiltonian just like the external field does in the Ising model. As a result, the point-to-point and point-to-level free energies are simply the classical Gibbs and Helmholtz free energies of the polymer model.  In this paper, we consider the behavior of the standard directed polymer model at fixed temperature, and show that the polymer can be in weak disorder for some external fields, while being in strong disorder for others. In other words, in contrast to the Ising model, the field can \textit{induce} the phase transition from Gaussian to non-Gaussian behavior in the polymer model at fixed temperature.

The first mathematically rigorous formulation of the directed polymer model was by Imbrie and Spencer \cite{MR968950}. Much of the early work can be found in \cite{MR1939654,MR2199799,MR1006293,MR2271480,MR1413246,MR2249671,MR2299713}; see \cite{MR2073332} and \cite{MR3444835} for comprehensive reviews. Let the measure space $(\R^{\Z_{\geq 0} \times \Z^d},\mathcal{F},\Prob)$ represent a field of iid weights $\{\w_{t,x}\}_{t \in \Z_{\geq 0}, x \in \Z^d}$ on the lattice $\Z^d$ independently for each time-step $t \in \Z_{\geq 0}$ (the nonnegative integers). We assume that the moment generating function $\mgf(\beta) = \E[ e^{\beta \w_{t,x}} ]$ is finite for all $\beta \in \R$, where $\E$ represents expectation with respect to the measure $\Prob$. For a finite lattice path $\gamma \colon \{0,\ldots,n\} \to \Z^d$, we define the weight or energy of the path as
\notes{The condition $\mgf(\beta) < \infty$ for \textbf{all} $\beta \in \R$ is what is used in Comets, in both his book and in Proposition 2.5 of \cite{MR1996276}}
\begin{equation}
    H(\gamma) := \sum_{i=1}^n \w_{i,\gamma(i)}.
\end{equation}

Consider a random walk $\vec{X}:=(X_n)_{n=0}^\infty$ in $\Z^d$, where the single-step probabilities are given by the probability mass function $p\colon \Z^d \to [0,1)$. The expectation and measure on the space of walks $((\Z^d)^{\Z_{\geq 0}},F)$ where $F$ is the natural $\sigma$-field, are $E_{p}$ and $P_{p}$ respectively. The measures $P_p$ and $\Prob$ are independent.
The partition function of the model is defined by
\begin{equation*}
    Z_{n,\beta,p}(x,y) := E_{p} \left[ \exp( \beta H(\vx_n)) | X_0 = x, X_n = y \right],
\end{equation*}
where $\vx_n$ refers to $(X_i)_{i=0}^n$, the first $n$ steps of the random walk path. We write $Z_{n,\beta,p}(x)$ instead of $Z_{n,\beta,p}(\origin,x)$, where $\origin$ is the origin in $\Z^d$; and we write 
$$Z_{n,\beta,p} = E_{p} [\exp( \beta H(\vx_n)) | X_0 = \origin]$$ when the endpoint is not constrained. The partition function is used to define the Gibbs measures on paths $\vx_\infty$, where the probability distribution of a path is given by 
\begin{equation}
    \mu_{n,\beta,p}(d\vx_\infty) := \frac{ e^{ \beta H(\vx_n)} }{Z_{n,\beta,p}}P_p(d \vx_\infty).
    \label{eq:gibbs measure definition}
\end{equation} 
That is, the first $n$ steps of the walk are given the Gibbs weight, and thereafter, the walk uses the transition probability of the kernel $p$. By projecting this measure on the first $n$ steps, the probability of a path $\vx_n$ is
\begin{equation*}
    \frac{ e^{ \beta H(\vx_n)} }{Z_{n,\beta,p}}P_p(\vx_n),
\end{equation*} 
which, abusing notation, we again denote by $\mu_{n,\beta,p}$. 
Given a functional $f$ on the space of random walks of length $n$, we write the expectation of $f$ with respect to the Gibbs measure as
\begin{equation*}
    \la f \ra_{n,\beta,p} := E_{p} [ f(\vx_n) \mu_{n,\beta,p}(\vx_n) ].
\end{equation*}
In the following, we will sometimes not include $\beta \AND p$ in the subscript as long as they are fixed and understood, and write $Z_n \OR \mu_n$ instead.

\subsection{Classical results}

This section discusses classical results and can be skipped by experts. 

Under the assumption of finite exponential moments, it is a classical theorem \cite[Proposition 2.5]{MR1996276} that the limit
\begin{equation}
    \lim_{n \to \infty} \frac{\log( Z_{n,\beta})}{n} =: F(\beta) 
\end{equation}
exists $\Prob$ a.s and in $L^1$. The constant $F(\beta)$ is called the \textit{free-energy} of the model, and from Jensen's inequality, it follows that
\begin{equation}
   F(\beta) \leq \logmgf(\beta),
    \label{eq:free energy annealed bound}
\end{equation}
where $\logmgf(\beta)=\log \mgf(\beta)$ is the logarithm of the moment generating function of the weights. This is called the \emph{annealed bound} for the free-energy. There are two classical phases of the polymer, essentially defined by whether or not equality holds in \eqref{eq:free energy annealed bound}. When equality holds, the polymer is said to be in the \textbf{high-temperature} phase, and when equality does not hold, the polymer is said to be in the \textbf{low- temperature} phase (also called the \emph{very strong disorder} phase in \cite{MR2579463}).  

Another important quantity is the \textit{normalized} partition function
\begin{equation}
   W_n := \frac{ Z_{n,\beta,p} }{ \mgf(\beta)^n},
   \label{eq:normalized partition function martingale}
\end{equation}
originally introduced by Bolthausen in his pioneering work \citep{MR1006293}. This is a martingale with respect to the filtration $\mathcal{F}_n = \sigma( \{ \w_{i,x} \}_{i \in \{0,\ldots,n\}, x \in \Z^d} )$, which corresponds to knowing all the weights up to time $n$. Since $W_n$ is also nonnegative,  
\begin{equation*}
    \lim_{n \to \infty} W_n = W_\infty,\quad \Prob-\almostsurely,
\end{equation*}
and by the Kolmogorov $0-1$ law, we have the dichotomy $\Prob( W_\infty > 0) = 1$ or $\Prob(W_\infty = 0) = 1$. The polymer is said to be in \textbf{weak disorder} if the former holds, and in \textbf{strong disorder} when the latter holds. A similar critical temperature $\overline{\beta_c}$ can be defined that separates the weak and strong disorder regimes. It is not difficult to see that weak disorder implies that we must have $\beta < \beta_c$; i.e., we must be in the high-temperature phase. It follows that $\beta_c \geq \overline{\beta_c}$, and in fact, $\beta_c = \overline{\beta_c}$ was a well-known conjecture \citep[Open Problem 3.2]{MR3444835}, that now appears to be solved \citep{junk_strong_2024}. 

The standard second-moment method provides a sufficient condition for weak disorder. It involves obtaining a uniform in $n$ bound for $\E[W_n^2]$. Since $W_n$ is a martingale, this shows that $W_n \to W_\infty$ almost surely and in $L^2$ and thus $\Prob( W_\infty > 0) = 1$.  
\begin{thm}[\cite{MR1006293,MR1413246}]
   Let 
   \begin{equation}
       \pi(p) := P_p( S_n = S'_n \text{ for some } n),
       \label{eq:intersection probability of two copies of the random walk}
   \end{equation}
   where $S_n$ and $S_n'$ are independent copies of the random walk with transition kernel $p$. If $\pi(p) < 1$ and
   \begin{equation}
       \mgf_2(\beta) := \frac{\mgf(2\beta)}{\mgf(\beta)^2} < \frac{1}{\pi(p)},
       \label{eq:L2 regime defining condition}
   \end{equation}
   then $W_\infty > 0$ a.s. 
   \label{thm:original l2 theorem for polymer}
\end{thm}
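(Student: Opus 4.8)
The plan is to run the standard second moment method. Since $W_n$ is a nonnegative martingale started from $W_0 = 1$, it converges $\Prob$-almost surely to $W_\infty$ (as already noted in the excerpt) and satisfies $\E[W_n] = 1$ for all $n$. If I can additionally show $\sup_n \E[W_n^2] < \infty$, then the convergence also holds in $L^2$, so $\E[W_\infty] = \lim_n \E[W_n] = 1 > 0$. In particular $\Prob(W_\infty > 0) > 0$, and the dichotomy $\Prob(W_\infty > 0) \in \{0,1\}$ established via the Kolmogorov $0$--$1$ law then forces $\Prob(W_\infty > 0) = 1$. Thus everything reduces to a uniform-in-$n$ bound on the second moment under the hypothesis $\mgf_2(\beta)\pi(p) < 1$.

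To compute $\E[W_n^2]$, I would introduce a second independent copy of the walk: writing $Z_{n,\beta,p}$ as the single-walk expectation $E_p[\exp(\beta H(\vx_n))]$ and squaring yields an expectation $E_p^{\otimes 2}$ over an independent pair $(S, S')$ of walks started at $\origin$. Taking the environment expectation $\E$ inside by Fubini and using that the weights $\{\w_{i,x}\}$ are independent across sites, the expectation factorizes over time: at each step $i$, a site visited by both walks contributes $\mgf(2\beta)$, a site visited by exactly one contributes $\mgf(\beta)$, and unvisited sites contribute $1$. After dividing by the normalization $\mgf(\beta)^{2n}$ the single-visit factors cancel and I obtain
\begin{equation*}
    \E[W_n^2] = E_p^{\otimes 2}\!\left[ \mgf_2(\beta)^{N_n} \right], \qquad N_n := \sum_{i=1}^n \mathbb{1}\{S_i = S_i'\},
\end{equation*}
where $N_n$ is the number of collisions of the two walks by time $n$.

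It remains to bound this uniformly. By Cauchy--Schwarz $\mgf_2(\beta) \geq 1$, so $\mgf_2(\beta)^{N_n}$ increases to $\mgf_2(\beta)^{N_\infty}$ and monotone convergence gives $\sup_n \E[W_n^2] = E_p^{\otimes 2}[\mgf_2(\beta)^{N_\infty}]$, where $N_\infty$ is the total collision count. A collision $S_i = S_i'$ is precisely a return to the origin of the difference walk $S - S'$, so by the strong Markov property $N_\infty$ is geometrically distributed: it equals $k$ with probability $(1 - \pi(p))\,\pi(p)^k$, where $\pi(p)$ is the return probability from \eqref{eq:intersection probability of two copies of the random walk}. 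Summing the resulting geometric series,
\begin{equation*}
    \sup_n \E[W_n^2] = (1 - \pi(p)) \sum_{k \geq 0} \bigl( \mgf_2(\beta)\,\pi(p) \bigr)^k = \frac{1 - \pi(p)}{1 - \mgf_2(\beta)\,\pi(p)},
\end{equation*}
which is finite precisely under \eqref{eq:L2 regime defining condition}, completing the argument.

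The computation itself is mechanical; the one step deserving care is the renewal argument giving the geometric law of $N_\infty$ with parameter $\pi(p)$. This is exactly where transience ($\pi(p) < 1$) enters: were the difference walk recurrent, then $N_\infty = \infty$ almost surely and the second moment would diverge for every $\beta \neq 0$. I would also confirm the convention in \eqref{eq:intersection probability of two copies of the random walk} that the collision time is counted over $n \geq 1$, so that $\pi(p)$ indeed coincides with the geometric parameter appearing above.
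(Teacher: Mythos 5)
Your proof is correct and follows exactly the approach the paper has in mind: the paper states this as a classical result of Bolthausen and Song--Zhou and only sketches the standard second-moment method (a uniform-in-$n$ bound on $\E[W_n^2]$, martingale $L^2$-convergence, and the Kolmogorov $0$--$1$ dichotomy), which is precisely what you carry out. Your collision-counting identity $\E[W_n^2] = E_p^{\otimes 2}[\mgf_2(\beta)^{N_n}]$ and the renewal argument giving the geometric law of the total collision count with parameter $\pi(p)$ are the standard details behind that sketch, and both are executed correctly.
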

The set $\{ \beta \colon \mgf_2(\beta) < \pi^{-1}(p)\}$ is called the $L^2$ region of the polymer. In this region, it is known that we are in weak disorder; i.e., $\beta < \overline{\beta_c}$, and in fact, the diffusively rescaled polymer path $\vx_n$ under the Gibbs measure converges to a Brownian motion in probability. More generally, for $p>1$, one can similarly define the region where the martingale $W_n$ is bounded in $L^p$. Let 
$$
    p^*(\beta) = \sup\{ p \colon (W_n)_{n=1}^\infty \text{ is } L^p \text{ bounded} \}.
$$
Under the assumption of bounded weights, \citet{Junk2022} showed that in the weak disorder region, $p^*(\beta) \geq 1 + 2/d$, and conjectured \citep{FukushimaJunk2023} that $p^*(\beta) > 1 + 2/d$ for $\beta < \beta_c$. It is known that $p^*(\beta) \geq 2$ in the $L^2$ region of weak-disorder. A relaxation of the bounded weights assumption are given in \citep{FukushimaJunk2023}. 

Theorem \ref{thm:original l2 theorem for polymer} is generally stated and proved in the case where $p$ is the standard nearest-neighbor random walk on $\Z^d$, but it is obvious and well-known that it applies to reasonably general transition kernels $p$. 

While Theorem \ref{thm:original l2 theorem for polymer} provides a sufficient condition for weak-disorder, it doesn't prove the existence of a critical temperature. This was proved by \citet{MR2271480}, who relied on an innovative application of the FKG inequality. We combine and summarize several results from Bolthausen to Comets-Yoshida in the theorem below. 
\begin{thm}[\cite{MR968950,MR1006293,MR1413246,MR2271480,MR1371075}]
    \hphantom{A}
    \begin{itemize}
        \item There is a critical value $\overline{\beta_c} \in [0,\infty]$ such that weak disorder holds when $\beta < \overline{\beta_c}$, and strong disorder holds when $\beta > \overline{\beta_c}$.
        \item A phase transition also occurs in the free energy: $\exists \beta_c \in [0,\infty]$ such that $F(\beta) = \logmgf(\beta)$ for $\beta \leq \beta_c$ and $F(\beta) < \logmgf(\beta)$ for $\beta > \beta_c$.
        \item In general $\overline{\beta_c} \leq \beta_c$.
        \item The critical value $\beta_c$ is $0$ for $d=1,2$ and $\overline{\beta_c} \in (0,\infty]$ for $d \geq 3$.
        \item If $d \geq 3$ and $\beta \in [0,\overline{\beta_c}]$, the rescaled polymer path under the Gibbs measure converges to a Brownian motion in probability.
    \end{itemize}
   \label{thm:full characterization of weak disorder bolthausen to comets et al}
\end{thm}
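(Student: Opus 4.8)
The plan is to assemble the five assertions from their classical sources, separating the two genuinely substantive inputs---the monotonicity in $\beta$ of each phase transition---from the soft consequences. Existence of $F(\beta)$ as an a.s.\ and $L^1$ limit is the classical fact recorded above, so I would only add that $F$ is convex, being a pointwise limit of the convex maps $\beta\mapsto\frac1n\log Z_{n,\beta}$ (each a cumulant generating function of $H(\vx_n)$ under $P_p$), and that Jensen gives the annealed bound \eqref{eq:free energy annealed bound}. Writing $p(\beta):=\logmgf(\beta)-F(\beta)\ge 0$, the asserted interval structure reduces to showing that $p$ is non-decreasing on $[0,\infty)$. For this I would differentiate the finite-volume gap $\logmgf(\beta)-\frac1n\E\log Z_{n,\beta}$ in $\beta$ and identify its limit $p'(\beta)$ with the nonnegative expected replica overlap $\lim_n\frac1n\sum_{k\le n}\E\big[(\mu_n\otimes\mu_n)(X_k=X_k')\big]$ for two independent polymer paths---an exact Gaussian integration-by-parts identity for Gaussian weights, and an analogous computation in general. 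Since $p(0)=0$, monotonicity forces $\{p=0\}=[0,\beta_c]$, which defines $\beta_c$.

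\textbf{Weak/strong disorder and the ordering.} The $0$--$1$ dichotomy for $\{W_\infty>0\}$ is already in hand. The real content is the existence of $\overline{\beta_c}$: that weak disorder at some $\beta_1$ forces weak disorder at every $\beta_0<\beta_1$. Here I would follow Comets--Yoshida and apply the FKG inequality to the i.i.d.\ field to show the functional controlling $\Prob(W_\infty>0)$ is monotone in $\beta$, so the weak-disorder set is a half-open interval $[0,\overline{\beta_c})$. The ordering $\overline{\beta_c}\le\beta_c$ is then immediate: in weak disorder $W_n\to W_\infty\in(0,\infty)$ a.s., hence $\frac1n\log W_n\to 0$; but $\frac1n\log W_n=\frac1n\log Z_{n,\beta}-\logmgf(\beta)\to-p(\beta)$, so $p(\beta)=0$ and $\beta\le\beta_c$.

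\textbf{Dimension dependence.} For $d\ge 3$ the difference walk $S_n-S_n'$ is transient, so $\pi(p)<1$; since $\mgf_2(\beta)\to 1$ as $\beta\to 0$, condition \eqref{eq:L2 regime defining condition} holds for all small $\beta>0$, and \Thmref{thm:original l2 theorem for polymer} yields weak disorder there, whence $\overline{\beta_c}>0$. For $d=1,2$ the difference walk is recurrent, $\pi(p)=1$, and the second-moment method is unavailable; the classical strong-disorder arguments for recurrent walks instead give $F(\beta)<\logmgf(\beta)$ for every $\beta>0$, i.e.\ $\beta_c=0$ (and a fortiori $\overline{\beta_c}=0$).

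\textbf{Diffusivity, and the main obstacle.} Finally, throughout weak disorder the rescaled path converges to Brownian motion in probability: in the $L^2$ region, $L^2$-convergence of $W_n$ lets one compare the quenched endpoint law with the annealed random-walk law through a local central limit theorem, and Comets--Yoshida extend this to the full region $[0,\overline{\beta_c})$. I expect the hardest steps to be the FKG monotonicity that pins down $\overline{\beta_c}$ and this full-region invariance principle; the overlap monotonicity behind $\beta_c$ is moderately technical, while existence of $F$, the annealed bound, the ordering $\overline{\beta_c}\le\beta_c$, and the $d\ge 3$ positivity are routine.
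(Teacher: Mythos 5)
The paper offers no proof of this theorem --- it is a compilation of classical results cited to Imbrie--Spencer, Bolthausen, Comets--Yoshida, et al.\ --- so your proposal must be judged against those classical arguments, and in outline it matches them: second-moment method for $d\ge 3$, tilted-measure/FKG monotonicity for $\overline{\beta_c}$, the soft ordering $\overline{\beta_c}\le\beta_c$, and deferral of the full weak-disorder invariance principle to \citep{MR2271480}. However, one step is a genuine gap. The bullet ``$\beta_c=0$ for $d=1,2$'' concerns the \emph{free-energy} transition, and you derive it from ``classical strong-disorder arguments for recurrent walks.'' Those arguments \citep{MR1996276} use recurrence plus FKG to prove $W_\infty=0$ a.s.\ for all $\beta>0$, i.e.\ $\overline{\beta_c}=0$; they do not produce the strict inequality $F(\beta)<\logmgf(\beta)$. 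Passing from strong disorder to the free-energy gap is precisely the $\beta_c=\overline{\beta_c}$ problem that this paper's introduction describes as a long-standing conjecture only recently resolved, so it cannot be absorbed as routine. The correct sources for $\beta_c=0$ are Comets--Vargas in $d=1$ \citep{MR2249671} and Lacoin in $d=2$ \citep{MR2579463}, via fractional moments combined with a change of measure and coarse graining --- a genuinely different and substantially harder technique than recurrence; the $d=2$ case in particular postdates the other results summarized here.

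A second, fixable, imprecision: your monotonicity of $p(\beta)=\logmgf(\beta)-F(\beta)$ via an ``exact'' replica-overlap identity is special to Gaussian weights, where integration by parts gives
$\frac{d}{d\beta}\bigl[\logmgf(\beta)-\tfrac1n\E\log Z_{n,\beta}\bigr]=\tfrac{\beta}{n}\sum_{k\le n}\E\bigl[(\mu_n\otimes\mu_n)(X_k=X_k')\bigr]\ge 0$.
There is no analogous exact identity under the paper's standing assumption ($\mgf(\beta)<\infty$ for all $\beta$, weights otherwise arbitrary). The standard repair is the same tilted-measure FKG argument you already invoke for $\overline{\beta_c}$, namely \citep[Theorem 3.2(b)]{MR2271480}: for a fixed path, under $\hat\Prob \propto e^{\beta H(\vx_n)-n\logmgf(\beta)}\,\Prob$ one has $\hat\E[H(\vx_n)]=n\logmgf'(\beta)$, $H(\vx_n)$ is increasing in the weights and $Z_{n,\beta}^{-1}$ is decreasing, so FKG yields the sign of the derivative of the gap. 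This is in fact the very mechanism the present paper adapts in its Section 6 to prove monotonicity in the presence of an external field, so replacing your overlap computation by it brings your argument fully in line with the cited literature.
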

 
Theorem \ref{thm:full characterization of weak disorder bolthausen to comets et al} doesn't guarantee that strong disorder or a low temperature regime exist; i.e., that $\overline{\beta_c}$ or $\beta_c$ are finite. This is guaranteed by the following entropic condition. If there are two measures $Q \ll P$, then the \textit{relative entropy} of the two measures is given by
\begin{equation}
    \entropy(P | Q) := - \int \log \left( \frac{dQ}{dP} \right) dP.
    \label{eq:relative entropy of two measures}
\end{equation}
Let $\mathbb{Q}_\beta(d\w_\origin) := \exp(\beta \w_\origin) \mgf(\beta)^{-1} \Prob(d\w_\origin)$. That is,  $\mathbb{Q}_\beta$ is a Gibbs-biased distribution on one individual weight. 
\begin{prop}
    Let $p$ be the standard nearest-neighbor random walk on $\Z^d$. If $\relativeHqp > \log(2d)$, then $F(\beta) < \logmgf(\beta)$; i.e., the polymer is in the low-temperature regime.
    \label{prop:condition on gamma 2 for strong disorder}
\end{prop}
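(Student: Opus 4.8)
The plan is to bound the quenched free energy $F(\beta)$ from above by the \emph{fractional-moment method}, and to recognize the hypothesis $\relativeHqp > \log(2d)$ as exactly the condition that makes this bound strictly better than the annealed bound $\logmgf(\beta)$. As a first step I would record the explicit form of the entropy. Since $\mathbb{Q}_\beta$ has density $e^{\beta\w_\origin}/\mgf(\beta)$ with respect to $\Prob$, a direct computation from \eqref{eq:relative entropy of two measures} (using $\E_{\mathbb{Q}_\beta}[\w_\origin] = \mgf'(\beta)/\mgf(\beta) = \logmgf'(\beta)$) gives the identity
\[
    \relativeHqp = \beta\,\logmgf'(\beta) - \logmgf(\beta).
\]
Finiteness of $\mgf$ on all of $\R$ guarantees that $\logmgf$ is smooth, so these derivatives exist.

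Next comes the fractional-moment estimate. Writing $Z_{n,\beta} = (2d)^{-n}\sum_\gamma e^{\beta H(\gamma)}$, where the sum runs over the $(2d)^n$ nearest-neighbor paths $\gamma$ of length $n$ from the origin, the subadditivity $\left(\sum_i a_i\right)^\theta \le \sum_i a_i^\theta$ for $\theta\in(0,1)$ yields $Z_{n,\beta}^\theta \le (2d)^{-n\theta}\sum_\gamma e^{\theta\beta H(\gamma)}$. Along a fixed $\gamma$ the visited weights $\w_{i,\gamma(i)}$ are i.i.d., so $\E[e^{\theta\beta H(\gamma)}] = \mgf(\theta\beta)^n$, and taking expectations over the $(2d)^n$ paths gives $\E[Z_{n,\beta}^\theta] \le (2d)^{n(1-\theta)}\mgf(\theta\beta)^n$. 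Jensen's inequality then supplies $\theta\,\E[\log Z_{n,\beta}] \le \log\E[Z_{n,\beta}^\theta]$; dividing by $n\theta$ and letting $n\to\infty$ (invoking the classical convergence $\tfrac1n\E[\log Z_{n,\beta}]\to F(\beta)$ quoted before \eqref{eq:free energy annealed bound}) yields, for every $\theta\in(0,1)$,
\[
    F(\beta) \le \phi(\theta) := \frac{1}{\theta}\Big[(1-\theta)\log(2d) + \logmgf(\theta\beta)\Big].
\]

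Finally I would carry out the key computation at the right endpoint. The function $\phi$ extends continuously to $\theta = 1$ with $\phi(1) = \logmgf(\beta)$, the annealed value, and a one-line differentiation combined with the entropy identity above gives
\[
    \phi'(1) = \beta\,\logmgf'(\beta) - \logmgf(\beta) - \log(2d) = \relativeHqp - \log(2d).
\]
Under the hypothesis $\phi'(1) > 0$, so $\phi$ is strictly increasing at $\theta = 1$; hence $\phi(\theta) < \phi(1) = \logmgf(\beta)$ for $\theta$ slightly below $1$, and choosing any such $\theta$ produces $F(\beta) < \logmgf(\beta)$, the low-temperature regime. The computation is elementary throughout, so there is no serious obstacle; the only real content — and the point I would emphasize — is the identification of $\phi'(1)$ with $\relativeHqp - \log(2d)$, which is precisely what makes the threshold $\log(2d)$, the exponential growth rate of the number of nearest-neighbor paths, appear. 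The minor care points are the limit $\tfrac1n\E[\log Z_{n,\beta}]\to F(\beta)$ and the continuity/differentiability of $\phi$ at $\theta=1$, both of which follow from the quoted convergence and the smoothness of $\logmgf$.
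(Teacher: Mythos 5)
Your proof is correct and takes essentially the same approach as the paper: both are the Kahane--Peyri\`ere fractional-moment method, with the hypothesis $\relativeHqp > \log(2d)$ appearing exactly as the derivative at $\theta=1$ of the fractional-moment bound. The paper organizes the same estimate as a one-step recursion $\E[W_n^\theta]\le r(\theta)\,\E[W_{n-1}^\theta]$ for a general kernel $p$ (with $\log(2d)$ replaced by $\entropy(p)$) and concludes via Markov plus Borel--Cantelli, whereas you bound $\E[Z_{n,\beta}^\theta]$ directly by summing over paths and finish with Jensen and the $L^1$ convergence of $n^{-1}\log Z_{n,\beta}$; these are cosmetic variants of the identical estimate.
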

The condition in Proposition \ref{prop:condition on gamma 2 for strong disorder} was originally introduced by Kahane and Peyri\'ere \cite{MR0431355} in a different context, and is recorded in \citep{MR1996276} in the polymer setting. In the literature, the quantity $\relativeHqp$ is called $\gammatwo(\beta)$ and has the formula
\begin{equation}
    \relativeHqp = \beta \logmgf'(\beta) - \logmgf(\beta).
    \label{eq:gammatwo quantity used in strong disorder condition}
\end{equation}
We find the relative entropy interpretation more appealing for reasons that will become clear (see Theorem \ref{thm:strong disorder statement}), and will prefer it over $\gammatwo(\beta)$.  

In weak disorder, the polymer endpoint under the Gibbs measure is spread out on the $\sqrt{n}$ scale and is distributed like a Gaussian. In strong disorder, the situation is quite different, and the polymer endpoint is concentrated around favored spots, a phenomenon called ``localization''. One way to observe this phenomenon is to study the probability of the most likely endpoint, namely
$$J_n:=\max_z\mu_{n-1,\beta,p}(X_n=z).$$
When the polymer is in weak disorder and the endpoint is diffusive, $J_n$ decays as a power of $n$ \citep[Remark 1.13]{junk_local_2023}. But when the endpoint localizes, $J_n$ will be relatively large; more precisely, the polymer is said to be localized \citep[Definition 5.1]{MR3444835} if 
\begin{equation}
    \liminf_{n\to\infty}\frac 1n\sum_{t=1}^n J_t>0,\ \mathbb{P}-\text{a.s.},
    \label{eq:localization definition}
\end{equation}
and delocalized otherwise. For the standard directed polymer with nearest-neighbor steps and Gaussian weights, \citet{MR1939654} proved $\limsup_n J_n>0$
in $d=1 \AND 2$ for all $\beta > 0$. This was extended to the stronger statement \eqref{eq:localization definition} and generalized to all iid weights with finite moment generating function by \citet{MR1996276}. We refer to \citep{MR1939654,MR1996276,MR4089496} for more details and generalizations. 

\section{The external field in the polymer model}
\subsection{Gibbs and Helmholtz Free Energies} 
As mentioned in the introduction, the directed polymer model can be seen as a RWRP \citep[Example 1.1]{rassoul-agha_quenched_2013,rassoul-agha_quenched_2014}. In those articles, more general potentials $V$ that depend on finite path segments (instead of just the vertex the path passes through) were considered, but we restrict to iid vertex-weight potentials (called local) here. Let $\supp(p) = \{ z \colon p(z) > 0 \}$ be the set of steps the random walk can take, and let $\mU$ be the convex hull of $\supp(p)$. All of their results are restricted to the case where $p$ has finite-range; i.e., $|\supp(p)| < \infty$.  

In RWRP, there is no explicit time-coordinate, and so, to cast our directed walks in their setting, we embed our walks in $d+1$ dimensions with the $e_0$ dimension representing time, and choose the kernel of our walk to satisfy $p(x) > 0$ only if $x = e_0 + \sum_{i=1}^d a_i e_i$ for some integers $a_i$. Then, our walks would be directed in their sense, and as long as $\E[ \w_0^{p} ] < \infty$ for some $p > d$, we are guaranteed the almost-sure existence of the \emph{point-to-point} and \emph{point-to-level} limiting free energies:
\begin{align}
    \gpp{x} & := \lim_{n \to \infty} \frac{1}{\beta n} \log E_p \left[ \exp( \beta H(\vx_n) ) | X_0 = 0, X_n = [nx] \right] \quad \forall x \in \mU, \label{eq:gpp definition} \\
    \gpl{h} & := \lim_{n \to \infty} \frac{1}{\beta n} \log E_p \left[ \exp( \beta H(\vx_n) + \beta X_n \cdot h ) | X_0 = \origin \right] \quad \forall h \in \R^{d}, \label{eq:gpl definition}
\end{align}
where $[x]$ denotes the unique lattice point in $[x, x+1)^{d+1}$. Under the assumption of $\mgf(\beta) < \infty$, the existence of the limit in \eqref{eq:gpl definition} is just the classical result of Comets, Shiga and Yoshida \citep[Proposition 2.5]{MR1996276}. The limits in \eqref{eq:gpp definition} and \eqref{eq:gpl definition} originally appear in \citep[Theorem 2.3]{rassoul-agha_quenched_2013} and \citep[Theorem 2.2]{rassoul-agha_quenched_2014} and apply to a wide class of ergodic potentials. They also prove several variational formulas for both $g_{pp}$ and $g_{pl}$. \citet{MR3556777} provide an interesting and mysterious characterization of the weak disorder regime in terms of these variational formulas. \citet{MR3535900} extend these results to zero temperature $(\beta = \infty)$, and delve deeper into the analysis of the minimizers of the variational formulas. 

It is clear that the set $\mU \subset \{ e_0 + x \colon x = \sum_{i=1}^d c_i e_i \}$ for $c_i \in \R$ and thus $\gpp{\cdot}$ and $\gpl{\cdot}$ are really only $d$ dimensional functions of the parameters $(c_1,\ldots,c_d)$. Hence, we view $\gpp{\cdot}$ and $\gpl{\cdot}$ as functions on $\R^d$ and not as functions on $\R^{d+1}$ as defined above. Similarly, we will view $\mU$ and $\supp(p)$ as subsets of $\R^d$ and $\Z^d$ respectively, and $p(x)$ as a kernel on $\Z^d$. It is easy to see 
that \citep[eq. (4.3)]{MR3535900}
\begin{equation}
    \gpl{h} = \sup_{x \in \mU} \{ \gpp{x} + h \cdot x \}, \quad h \in \R^d.
    \label{eq:gpl in terms of gpp}
\end{equation}
In other words, $\gpl{h}$ is the convex dual of the convex function $-\gpp{x}$. A priori, $\gpp{x}$ may not be continuous up to the boundary of $\mU$, and hence $\gpp{x}$ has to be extended with an upper semicontinuous regularization to invert \eqref{eq:gpl in terms of gpp} and establish the usual convex duality between $\gpp{x}$ and $\gpl{h}$. See \citep[Section 4]{MR3535900} for details.  The tilt-velocity (field-direction in our language) duality is standard fare in large deviations theory, and appears in the rate function for the endpoint of RWRE and RWRP going back to \citet{MR1675027} and \citet{MR1989232}. In the case of the directed polymer, it probably first appears in \citep[equation (4.3)]{rassoul-agha_quenched_2014}.  

We have no need for $g_{pp}$ and the duality formula in this paper. However, we do need the existence of $g_{pl}$ when $|\supp(p)|=+\infty$. This is so that we can consider the case of the Gaussian random walk, which gives an example where the ``existence of strong disorder'' part of our theorems does not apply and for some temperatures the polymer is always in weak disorder (see Section \ref{sec:gaussian random walk}). The assumption of finite support of $p$ appears as an important technical restriction in the series of papers cited above. One of the technical contributions of this paper is to remove this restriction in the existence results for $\gpl{h}$. In fact, our results are stronger, since they show that the limit in \eqref{eq:gpl definition} exists simultaneously for all $\beta \in (0,\infty)$ and $h \in \R^d$, $\Prob-\almostsurely$ (see Proposition \ref{prop:existence and convexity of gpl}).

\notes{I think this section may not be necessary to include. The theory of convex duality is well developed when the convex functions involved are upper-semicontinuous. So it is of some value to establish when $\gpp{x}$ is continuous up to the boundary of $\mU$. A priori, we do not know this, and we therefore define as in \citep{MR3535900}, $\gpp{x} = -\infty ~ \forall x \in \mU^c$, and let $\overlinegpp(x) = \max( \gpp{x}, \limsup_{y \to x} \gpp{y})$ be the upper semicontinuous regularization of $\gpp{x}$ defined on all of $\R^d$. Then, by standard convex duality \citep{rockafellar_convex_1970}
\begin{align*}
    \label{eq:}
    \gpl{h} & = \sup_{x \in \R^d} \left\{ \overlinegpp(x) + h \cdot x \right\}\\
    \overlinegpp(x) & = \inf_{h \in \R^d} \left\{ \gpl{h} - h \cdot x \right\}\\
    \gpp{x} & = \inf_{h \in \R^d} \left\{ \gpl{h} - h \cdot x \right\} \quad x \in \riU
\end{align*}
where $\riU$ is the relative interior of $\mU$ and the last part follows because $\gpp$ is continuous on $\riU$. We say that $h$ and $x \in \riU$ are dual to each other if
\begin{equation*}
    \gpl{h} = \gpp{x} + h \cdot x
\end{equation*}
}

\subsection{The effect of the external field}
In the usual Ising model on a rectangle $R \subset \Z^d$, the Hamiltonian of a spin configuration $\sigma \in \{\pm 1\}^R$ under external field $h \in \R^d$ is 
\begin{equation*}
   H(\sigma) = \sum_{x \sim y, x,y \in R} \sigma_x \sigma_y + h \cdot \sum_{x \in R} \sigma_x,
\end{equation*}
where $x \sim y$ means $x$ and $y$ are nearest-neighbors. The partition function is given, as usual, by $Z = \sum_\sigma \exp( \beta H(\sigma))$. If we view the random walk in the polymer model as a sequence of increments $\sigma = (X_{i+1} - X_i)_{i=0}^{n-1}$, then the parameter $h$ appears in \eqref{eq:gpl definition} just like the external field appears in the Ising model, so we refer to the parameter $h$ as an \textbf{external field} that biases the underlying random walk. Other authors refer to $h$ as the \textit{tilt}. The field terminology also makes sense from the point of view of the central limit theorem of the polymer endpoint; see Remark \ref{rem:misalignment with underlying drift}.

The point-to-level limit can be written in terms of the original random walk with drift, as follows:
\begin{align}
    E_p & \left[ \exp\left(  \beta ( H(\vx_n) + h \cdot X_n)  \right) | X_0 = 0 \right]  \nonumber\\
    & = \mgfrw_p(\beta h)^n \sum_{\vx_n} \exp( \beta H(\vx_n) ) \prod_{i=1}^n \frac{ e^{\beta(X_i - X_{i-1} ) \cdot h}}{\mgfrw_p(\beta h)} p(X_i - X_{i-1}),\nonumber\\
    & = \mgfrw_p(\beta h)^n E_{q_{p,\beta}(h)} \left[ \exp( \beta H(\vx_n) ) | X_0 = \origin \right],\nonumber
\end{align}
where $\mgfrw_p(\beta h) = E_p[ \exp( \beta (X_1 - X_0) \cdot h ) ]$ is the moment generating function of the underlying random walk, and $q_{p,\beta}(h,\cdot)$ is a random walk transition kernel given by
\begin{equation}
    q_{\beta,p}(h,x) := \frac{ e^{\beta x \cdot h}}{\mgfrw_p(\beta h)} p(x).
    \label{eq:transition probability in direction h}
\end{equation}
To save ourselves from a notational alphabet soup, we will drop $p$ and $\beta$ from the notation, and simply refer to $q(h)$ as the random walk transition probability under the external field $h$. The field $h$ introduces a drift into the underlying random walk in direction $\beta^{-1} \grad_h \logmgfrw(\beta h)$, where $\logmgfrw(\beta h) = \log \mgfrw(\beta h)$. Then,
\begin{equation}
    \gpl{h} = \lim_{n \to \infty} \frac{1}{\beta n} \log Z_{n,\beta,q(h)} + \frac{1}{\beta}  \logmgfrw(\beta h) \quad \almostsurely .
\end{equation}
With a two parameter free energy $\gpl{h}$ like in the Ising model and other systems in statistical physics, it is of considerable interest to describe the phase diagram of the system. This is the main purpose of this paper.

Taking $\E$-expectation over the weights and applying Jensen's inequality, we obtain the annealed bound
\begin{equation}
    \gpl{h} \leq \frac1{\beta} \left( \logmgf(\beta) + \logmgfrw(\beta h) \right).
    \label{eq:annealed bound for gpl}
\end{equation}

By analogy with the annealed bound in \eqref{eq:free energy annealed bound} in \Secref{sec:introduction}, we say that the polymer is in the \textbf{high temperature} regime when the external field is $h$ if equality holds in \eqref{eq:annealed bound for gpl}, and in \textbf{low temperature} otherwise. Strictly speaking, it is the pair $(h,\beta)$ that is at high or low temperature, but when $\beta$ is fixed and understood, we will simply refer to the field $h$ as being in high or low temperature. Again, it is easy to check that
\begin{equation}
   W_{n,\beta}(h) := \frac{ Z_{n,\beta,q(h)} }{ \mgf(\beta)^n}
\end{equation}
is a nonnegative martingale (c.f.~\eqref{eq:normalized partition function martingale}), and thus has a $\Prob$-almost sure limit $W_{\infty,\beta}(h)$. As before, we can define \textbf{weak and strong disorder} under field $h$ according as $W_{\infty,\beta}(h) > 0$ or $W_{\infty,\beta}(h) = 0$, $\Prob-\almostsurely$. By the Kolmogorov 0--1 law, this is a dichotomy, and therefore, we define two deterministic sets of external fields
\begin{align*}
    \label{eq:}
    \weak_\beta & := \{ h \in \R^d \colon \Prob( W_{\infty,\beta}(h) > 0) = 1 \}, \\
    \strong_\beta & := \{ h \in \R^d \colon \Prob( W_{\infty,\beta}(h) = 0) = 1 \}. 
\end{align*}
It is clear that $\R^d$ is the disjoint union of $\weak_\beta \AND \strong_\beta$. However, it is not clear when each of them is nonempty.
 
\subsection{Main Results}
Our main results (Theorems \ref{thm:simultaneosly in weak and strong disorder} and \ref{thm:strong disorder statement}) shows that polymers in $d \geq 3$ and fixed $\beta > 0$ can simultaneously be in weak disorder for some values of the external field, and in strong disorder in others. For all of our results, we assume
\begin{equation}
    \mgf(\beta) < \infty \quad \forall \beta \in \R, \AND \mgfrw_p(t) < \infty \quad \forall t \in \R^d.
    \label{eq:assumptions on both mgfs}
\end{equation}

\begin{prop}
    The limit in \eqref{eq:gpl definition} exists almost surely simultaneously for all $\beta \in (0,\infty)$ and $h \in \R^d$, and defines $\gpl{h}$, which is a continuous function of $(\beta,h)$. $\gpl{h}$ is convex in $\beta$ and $h$ separately, and $\gpl{h/\beta}$ is jointly convex in $(\beta,h)$.
    \label{prop:existence and convexity of gpl}
\end{prop}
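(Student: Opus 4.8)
The plan is to work with the finite-$n$ free energies and to exploit the fact that, after a suitable reparametrization, they are jointly \emph{convex} functions of the parameters; convexity then supplies the uniform control needed to upgrade the classical fixed-parameter existence theorem into a statement holding simultaneously for all $(\beta,h)$ off a single null set. Concretely, write
\[
 \ell_n(\beta,\eta) := \frac1n \log E_p\left[ \exp\left( \beta H(\vx_n) + \eta\cdot X_n \right) \,\middle|\, X_0=\origin \right], \qquad (\beta,\eta)\in(0,\infty)\times\R^d,
\]
so that the prelimit $b_n(\beta,h):=\frac{1}{\beta n}\log E_p[\exp(\beta H(\vx_n)+\beta X_n\cdot h)]$ of $\gpl{h}$ equals $\beta^{-1}\ell_n(\beta,\beta h)$, while after the substitution $h\mapsto h/\beta$ the prelimit of $\gpl{h/\beta}$ equals $\beta^{-1}\ell_n(\beta,h)$. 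Since the exponent $\beta H(\vx_n)+\eta\cdot X_n$ is jointly affine in $(\beta,\eta)$, Hölder's inequality shows each $\ell_n$ is jointly convex in $(\beta,\eta)$; this is the structural input that drives everything.

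First I would establish convergence for a single fixed pair. Writing $\eta=\beta h$ and absorbing the field into the walk through the tilted kernel $q(h)$ of \eqref{eq:transition probability in direction h}---a bona fide transition kernel precisely because $\mgfrw_p(\beta h)<\infty$---one sees that $\ell_n(\beta,\beta h)$ equals, up to the deterministic additive constant $\logmgfrw(\beta h)$, the per-step log-partition function of a directed polymer with walk kernel $q(h)$. Its almost sure and $L^1$ convergence is then the theorem of Comets--Shiga--Yoshida \citep[Proposition 2.5]{MR1996276}, which needs only $\mgf(\beta)<\infty$ and makes no use of finite range. Taking a countable dense set $D\subset(0,\infty)\times\R^d$ and intersecting the corresponding null sets produces a single full-probability event on which $\ell_n(\beta,\eta)\to\Lambda(\beta,\eta)$ for every $(\beta,\eta)\in D$.

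The heart of the argument is the passage from convergence on $D$ to locally uniform convergence on the whole open domain, via joint convexity. The relevant deterministic principle is that a sequence of finite convex functions on an open convex set which converges pointwise on a dense subset, and which is locally uniformly bounded, converges locally uniformly---hence pointwise everywhere---to a finite convex function. Local uniform boundedness I would obtain by a convexity sandwich: boundedness of $\{\ell_n\}$ at finitely many points of $D$ whose convex hull engulfs a given compactum (these converge, hence are bounded) controls $\ell_n$ from above, and a supporting-hyperplane estimate controls it from below, uniformly in $n$; the annealed bound \eqref{eq:annealed bound for gpl} gives an a priori upper bound as a safeguard. Granting this, convergence holds simultaneously for all $(\beta,\eta)$ on the full-probability event, the limit $\Lambda$ is convex and---being a finite convex function on an open set---automatically continuous, and unwinding the definitions yields the almost sure simultaneous existence of $\gpl{h}=\beta^{-1}\Lambda(\beta,\beta h)$ together with its continuity in $(\beta,h)$. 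Convexity of $\gpl{h}$ in $h$ is then immediate, since $h\mapsto\Lambda(\beta,\beta h)$ is convex (an affine precomposition) and $\beta^{-1}>0$; the convexity in $\beta$ and the joint convexity of $\gpl{h/\beta}$ descend from the joint convexity of $\ell_n$ secured by the affine-exponent observation, passing to the limit.

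The main obstacle I anticipate is exactly the point the infinite-support case is designed to stress: showing that the $\ell_n$ are finite and locally uniformly bounded in $n$, almost surely and simultaneously over the parameters. For a fixed $(\beta,\eta)$ finiteness is part of the Comets--Shiga--Yoshida statement, but when $|\supp(p)|=\infty$ the endpoint $X_n$ is unbounded and the weights $\{\w_{i,x}\}$ are unbounded over the infinitely many visited sites, so the \emph{simultaneous} finiteness of $E_p[\exp(\beta H(\vx_n)+\eta\cdot X_n)]$ and the uniform-in-$n$ local bounds must be established rather than assumed. This is where the full-line hypotheses \eqref{eq:assumptions on both mgfs} genuinely enter, through exponential-moment estimates for the tilted walk together with a union bound over $D$ to produce one null set; once this uniform integrability and boundedness is in hand, the convexity machinery is soft and the remaining assertions are routine.
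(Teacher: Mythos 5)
Your existence-and-continuity argument is correct, and it takes a genuinely different route from the paper's. The paper proves a quantitative, eventually-almost-sure Lipschitz bound for $g_n(\beta,h)$ on compact parameter sets by controlling the Gibbs averages $n^{-1}\la X_n\ra_{n,\beta,q(h)}$ and $n^{-1}\la H(\vx_n)\ra_{n,\beta,q(h)}$ with large-deviation estimates (Cram\'er's theorem for the walk and for the weights, a lower bound on the partition function along typical paths, then Chebyshev and Borel--Cantelli), and concludes via Arzel\`a--Ascoli together with convergence on $(\Q\cap(0,\infty))\times\Q^d$. You instead observe that $\ell_n(\beta,\eta)=\frac1n\log E_p[\exp(\beta H(\vx_n)+\eta\cdot X_n)]$ is jointly convex by H\"older, obtain convergence on a countable dense set from Comets--Shiga--Yoshida exactly as the paper does, and then invoke the standard convex-analysis theorem (Rockafellar's Theorem 10.8): finite convex functions on an open convex set converging pointwise on a dense subset converge locally uniformly to a finite convex, hence continuous, limit. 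This is softer and considerably shorter; indeed the ``main obstacle'' you flag---local uniform boundedness---is automatic from dense convergence plus convexity (your convex-hull sandwich is exactly the proof of that step), and simultaneous a.s.\ finiteness of $\ell_n$ follows from Fubini over a countable dense parameter set together with convexity of the finiteness region. What the paper's harder route buys is explicit derivative bounds on the prelimit free energies, which have independent content; what yours buys is brevity and the fact that the infinite-range difficulty is absorbed entirely into the soft convexity machinery.

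There is, however, a genuine gap in your last step: convexity of $\gpl{h}$ in $\beta$ and joint convexity of $\gpl{h/\beta}$ do \emph{not} ``descend from the joint convexity of $\ell_n$.'' The prelimit of $\gpl{h}$ is $\beta^{-1}\ell_n(\beta,\beta h)$; the map $\beta\mapsto\ell_n(\beta,\beta h)$ is indeed convex (affine precomposition), but multiplying by $\beta^{-1}$ is not a convexity-preserving operation, and likewise $\beta^{-1}\ell_n(\beta,h)$ need not be jointly convex. What your argument actually yields is that $\beta\,\gpl{h}$ is convex in $\beta$ and that $\beta\,\gpl{h/\beta}=\lim_n\frac1n\log E_p[\exp(\beta H(\vx_n)+h\cdot X_n)]$ is jointly convex in $(\beta,h)$. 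The prefactor is not cosmetic: taking degenerate weights $\w\equiv0$, $d=1$ and $p$ the simple random walk (allowed by \eqref{eq:assumptions on both mgfs}) gives
\begin{equation*}
\gpl{h}=\beta^{-1}\log\cosh(\beta h),\qquad \gpl{h/\beta}=\beta^{-1}\log\cosh(h),
\end{equation*}
and the first is strictly concave in $\beta$ near $\beta=0$ while the second fails to be jointly convex in $(\beta,h)$. You should know that the paper's own proof has the same blind spot: its displayed formula for $\partial_\beta g_n(\beta,h)$ and its Hessian computation for $g_n(\beta,h/\beta)$ silently drop the terms produced by differentiating the $1/\beta$ prefactor, i.e.\ they are really computations for $\beta g_n$, so the paper too only substantiates convexity of $\beta\,\gpl{h}$ and joint convexity of $\beta\,\gpl{h/\beta}$. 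So your gap mirrors a defect in the paper rather than a missing idea available to you; but as written, this step of your proposal (and the corresponding clause of the proposition) does not follow from what precedes it.
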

Proposition \ref{prop:existence and convexity of gpl} is a point-to-line analog of a full ``shape theorem'' for the free energy. The main difficulty here over the point-to-point version in \citep[Theorem 2.2]{rassoul-agha_quenched_2014}, is that $\supp(p)$ could be infinite. However, our assumption on the weights \eqref{eq:assumptions on both mgfs} is stronger than theirs, and as noted in \citep{MR2381596}, Proposition 2.5 in \citep{MR1996276} does work to show the existence of $g_{pl}$ on any dense countable subset of $(0,\infty) \times \R^d$ even when $\supp(p)$ is infinite. We provide a Lipschitz estimate on compact subsets for $\gpl{h}$ to prove Proposition \ref{prop:existence and convexity of gpl} in Section \ref{sec:shape theorem}.
\begin{rem}
    Note that from physical considerations, $\gpl{h}$ as defined in \eqref{eq:gpl definition}, is the natural Helmholtz free-energy of the system. However, in some respects, $\gpl{h/\beta}$ is the more natural object to study, because it separates out the effect of $\beta$ and $h$ in the partition function and would lead to cleaner statements in Proposition \ref{prop:existence and convexity of gpl} and Theorem \ref{thm:monotonicity}. That said, we work with $\gpl{h}$ as that is the object that was first introduced in the mathematics literature (\citep{rassoul-agha_quenched_2014}), it makes the duality between $\gpl{h}$ and $\gpp{x}$ in \eqref{eq:gpl in terms of gpp} cleaner, and is the prevalent choice in the physics literature as well.
\end{rem}

Let the Shannon entropy of the random walk transition probability be
\begin{equation}
    \entropy(p) := -\sum_{x \in \Z^d} \log(p(x)) p(x) = -E_p[ \log p(X_1) | X_0 = \origin ].
    \label{eq:shannon entropy of transition probability}
\end{equation}
For any $K \subset \supp(p)$, define 
\begin{equation}
    \entropy_K(p) := -E_p\left[ \log \left( \frac{p(X_1)}{p(K)} \right) \bigg| X_1 \in K, X_0 = \origin \right].
\end{equation}
That is, $\entropy_K(p)$ is the Shannon entropy of the random walk conditioned to take steps in the set $K$. 
\begin{thm}
   Let $\beta > 0$ be fixed.
   \begin{enumerate}
       \item \label{item:weak disorder statement} If $\mgf_2(\beta) < \pi(p)^{-1}$, where $\pi(p)$ is the intersection probability defined in \eqref{eq:intersection probability of two copies of the random walk}, $\weak_\beta$ contains a nonempty neighborhood of the origin such that $\mgf_2(\beta) < \pi(q(h))^{-1}$, and thus the polymer is at high temperature for any $h$ in this neighborhood.
       \item \label{item:strong disorder holds when p has finite range} If $p$ is nontrivial and has finite range, there exists a deterministic set $D \subset \R^d$, such that for each $h \in \R^d \setminus D$, $\lambda h$ is in low-temperature (and hence in  $\strong_\beta$) for all large enough $\lambda > 0$. $D$ is contained in a finite union of hyperplanes; see  \eqref{eq:defining the codimension one bad set of fields}.
   \end{enumerate}
   \label{thm:simultaneosly in weak and strong disorder}
\end{thm}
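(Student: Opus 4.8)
The plan is to prove the two parts by completely different mechanisms. For \Itemref{item:weak disorder statement} I apply the $L^2$ criterion of \Thmref{thm:original l2 theorem for polymer} to the tilted kernel $q(h)$ and argue that the intersection probability $\pi(q(h))$ is continuous at the origin. For \Itemref{item:strong disorder holds when p has finite range} I establish a fractional-moment version of the Kahane--Peyri\`ere/\citep{MR1996276} criterion valid for an arbitrary finite-range kernel, and then show that the tilted walk $q(\lambda h)$ degenerates (its Shannon entropy tends to $0$) as $\lambda\to\infty$ for directions $h$ outside a finite union of hyperplanes.

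For the first part, note that $\mgf_2(\beta)$ does not depend on $h$, and that up to the deterministic factor $\mgfrw_p(\beta h)^n$ the partition function under field $h$ is that of the polymer driven by $q(h)$; hence \Thmref{thm:original l2 theorem for polymer} gives $h\in\weak_\beta$ as soon as $\mgf_2(\beta)<\pi(q(h))^{-1}$. Since $q(0)=p$ and $\mgf_2(\beta)<\pi(p)^{-1}$ strictly, it suffices to show $\limsup_{h\to 0}\pi(q(h))\le\pi(p)$. I would prove this through the Green's-function identity $\pi(q)=1-G_q^{-1}$, where $G_q=\sum_{n\ge 0}P_q(S_n=S_n')=\sum_{n\ge 0}\bar q^{\,*n}(\origin)$ is the expected number of collisions and $\bar q$ is the law of the difference walk $S_n-S_n'$. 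The key structural point is that both copies have the \emph{same} drift, so the difference walk is mean-zero for every $h$, with covariance depending continuously on $h$ and nonsingular (as $p$, hence $q(h)$, is genuinely $d$-dimensional). A local central limit theorem then yields $\bar q_h^{\,*n}(\origin)\le C\,n^{-d/2}$ with $C$ uniform for $h$ near $0$; since $d\ge 3$ this is summable, so dominated convergence gives $G_{q(h)}\to G_p$ and hence continuity of $\pi(q(h))$. The main obstacle here is precisely this \emph{uniform} local CLT: one must control the constants as $h$ varies and deal with the periodicity of the walk, using the finiteness of all exponential moments in \eqref{eq:assumptions on both mgfs} to bound the moments of $q(h)$ uniformly on compacts.

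For the second part, write $F_q(\beta):=\lim_n\frac1n\log Z_{n,\beta,q}$ for the free energy driven by kernel $q$; then the annealed bound \eqref{eq:annealed bound for gpl} at field $\lambda h$ reduces to $F_{q(\lambda h)}(\beta)\le\logmgf(\beta)$, low temperature being the strict inequality, which places $\lambda h$ in $\strong_\beta$ by the inclusion (low temperature)$\subseteq\strong_\beta$ recorded before \Thmref{thm:full characterization of weak disorder bolthausen to comets et al}. I would first prove the following generalization of \Propref{prop:condition on gamma 2 for strong disorder} to an arbitrary finite-range $q$ by the fractional moment method: writing $q(\gamma)=\prod_i q(X_i-X_{i-1})$ for the reference weight of a path, subadditivity of $t\mapsto t^\theta$ on $(0,1)$ and independence of the weights along a fixed path give
\begin{equation*}
\E\bigl[Z_{n,\beta,q}^{\theta}\bigr]\le\sum_{\gamma}q(\gamma)^{\theta}\,\mgf(\theta\beta)^{n}=\Bigl(\mgf(\theta\beta)\sum_{z}q(z)^{\theta}\Bigr)^{n},
\end{equation*}
so that $F_q(\beta)\le\phi(\theta):=\tfrac1\theta\bigl(\log\mgf(\theta\beta)+\log\sum_{z}q(z)^{\theta}\bigr)$, where finiteness of $\sum_z q(z)^{\theta}$ is exactly where the finite-range hypothesis enters. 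Since $\phi(1)=\logmgf(\beta)$ and a direct computation gives $\phi'(1)=\beta\logmgf'(\beta)-\logmgf(\beta)-\entropy(q)=\relativeHqp-\entropy(q)$ by \eqref{eq:gammatwo quantity used in strong disorder condition}, whenever $\relativeHqp>\entropy(q)$ we obtain $\phi(\theta)<\logmgf(\beta)$ for $\theta$ slightly below $1$, hence $F_q(\beta)<\logmgf(\beta)$.

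It remains to apply this with $q=q(\lambda h)$ and send $\lambda\to\infty$. For fixed $h$, $q(\lambda h)$ concentrates on the face $K_h:=\argmax_{x\in\supp(p)}x\cdot h$, converging to $p$ conditioned on $\{X_1\in K_h\}$, so by continuity of Shannon entropy on a fixed finite support $\entropy(q(\lambda h))\to\entropy_{K_h}(p)$. Setting $D:=\{h: K_h\text{ is not a single point}\}$, for $h\notin D$ the limiting measure is a point mass and $\entropy_{K_h}(p)=0$; since $\relativeHqp=\gammatwo(\beta)>0$ for $\beta>0$ and nondegenerate weights, we get $\relativeHqp>\entropy(q(\lambda h))$ for all large $\lambda$, and the criterion yields low temperature, i.e.\ $\lambda h\in\strong_\beta$. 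Finally, if $K_h$ contains two distinct points $x,y\in\supp(p)$ then $(x-y)\cdot h=0$, so $D\subseteq\bigcup_{x\ne y\in\supp(p)}\{h:(x-y)\cdot h=0\}$ is contained in a finite union of hyperplanes. The main obstacle in this part is the fractional-moment lemma itself---both justifying the bound (the path sum and the role of finite range) and carrying out the derivative computation $\phi'(1)=\relativeHqp-\entropy(q)$---while the entropy degeneration and the description of $D$ are comparatively routine.
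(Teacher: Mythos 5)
Your proposal follows essentially the same route as the paper: for part (1) you reduce to continuity of the expected collision count of the difference walk at $h=\origin$ (your Green's function $G_q=1/(1-\pi(q))$ is just the paper's return series $R(q)=\pi(q)/(1-\pi(q))$), with the required uniform-in-$h$ bound $P_{q(h)}(S_n=S_n')\le Cn^{-d/2}$ being exactly what the paper supplies; and for part (2) you use the same fractional-moment (Kahane--Peyri\`ere) criterion with the identical derivative computation $\phi'(1)=\relativeHqp-\entropy(q)$, followed by the same entropy degeneration $\entropy(q(\lambda h))\to\entropy_{K(h)}(p)=0$ off the hyperplane set $D$. The one place you stop short is the uniform local-CLT estimate, which you correctly flag as the main obstacle: the paper's Section~\ref{sec:weak disorder} carries it out by splitting the Fourier integral for $P_{q(h)}(T_n=\origin)$ into a near-origin region with a Gaussian bound $e^{-C|u|^2}$ and a complementary region where the characteristic function satisfies $|\phi_{q(h)}(u)|\le c<1$ uniformly for small $|h|$, so that is where the bulk of the remaining work lies.
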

When $\mgf_2(\beta) < \pi(p)^{-1}$ and $p$ has finite range, Theorem \ref{thm:simultaneosly in weak and strong disorder} implies that for such $\beta$, $\weak_\beta$ and $\strong_\beta$ are both nonempty. See Figure \ref{fig:phase diagram} for a schematic of the phase diagram implied by our results. 

\begin{figure}
\includegraphics[width=5cm]{./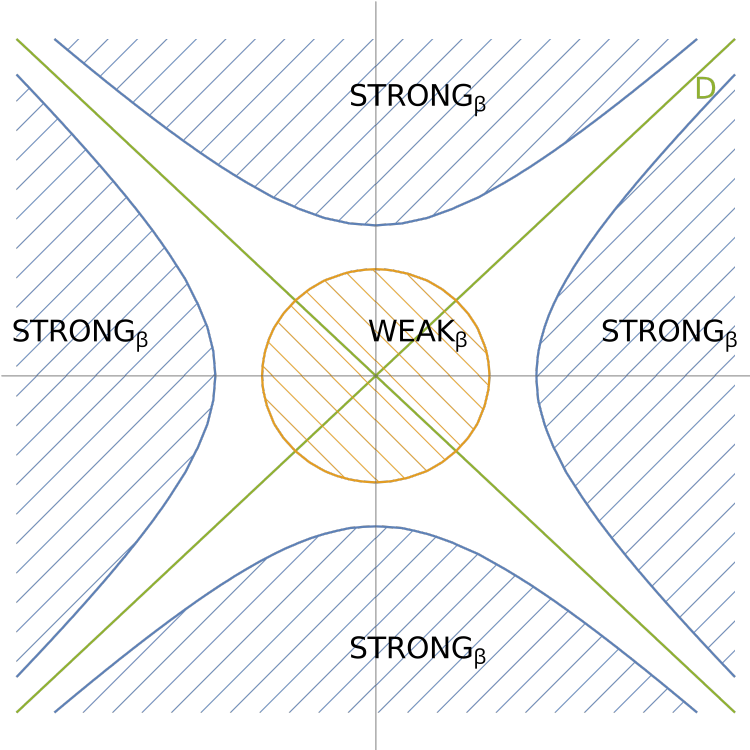}
\caption{
This is a schematic of the weak and strong disorder regions as a function of the external field $h\in\R^3$ on the plane $h_3=0$ when $d=3$, $p$ is the simple random walk and $\beta$ is small enough to be in the $L^2$ region. $D$ is contained in the diagonal lines $h_1 = \pm h_2$. As long as $\beta$ is small enough, there is region of weak disorder for small $|h|$. If $h$ is not in $D$, our results say that $\lambda h$ is at low-temperature and hence in $\strong_\beta$ for $\lambda$ large enough. Our results cannot precisely pin down the phase transition in $h$ with $\beta$ fixed; and so in the region between $\weak_\beta$ and $\strong_\beta$ is left unshaded and unpatterned.}
\label{fig:phase diagram}
\end{figure}

Item \ref{item:strong disorder holds when p has finite range} in Theorem \ref{thm:simultaneosly in weak and strong disorder} is actually fairly explicit; that is, we can identify $h$ for which the polymer is in strong disorder. 
\begin{thm}
    Let $\beta > 0$ and $h$ be such that 
    \begin{enumerate}
        \item $K(h) := \argmax_{x \in \supp(p)} h \cdot x$ is nonempty, and
        \item    
           \begin{equation}
               \relativeHqp > \entropy_{K(h)}(p),
           \label{eq:gammatwo condition for general random walks}
       \end{equation}
       where $\relativeHqp$ is defined in \eqref{eq:gammatwo quantity used in strong disorder condition}.
    \end{enumerate}
     Then for all large enough $\lambda > 0$, $\lambda h$ is at low temperature, and thus $\lambda h \in \strong_\beta$.
   \label{thm:strong disorder statement} 
\end{thm}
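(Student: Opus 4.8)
The plan is to peel the external field off the free energy, reduce the claim to a strong-disorder criterion for the tilted walk, and prove that criterion by a fractional-moment estimate whose derivative at exponent $1$ turns out to be exactly $\relativeHqp-\entropy(q)$. Concretely, I would start from the representation $\gpl{\lambda h}=\lim_{n}\frac{1}{\beta n}\log Z_{n,\beta,q(\lambda h)}+\frac1\beta\logmgfrw(\beta\lambda h)$ together with the annealed bound \eqref{eq:annealed bound for gpl}: subtracting $\frac1\beta\logmgfrw(\beta\lambda h)$ shows that $\lambda h$ is at low temperature precisely when the tilted-walk free energy $F_{q(\lambda h)}(\beta):=\lim_n\frac1n\log Z_{n,\beta,q(\lambda h)}$ lies strictly below $\logmgf(\beta)$. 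Since low temperature implies membership in $\strong_\beta$, it suffices to prove this strict inequality for all large $\lambda$.

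Next I would establish a kernel-version of Proposition \ref{prop:condition on gamma 2 for strong disorder}: for any transition kernel $q$ of finite Shannon entropy, $\relativeHqp>\entropy(q)$ forces $F_q(\beta)<\logmgf(\beta)$. Fix $\theta\in(0,1)$; using $(\sum a_\gamma)^\theta\le\sum a_\gamma^\theta$ and the independence of the weights along a directed path (which visits distinct sites),
\[
\E\!\left[Z_{n,\beta,q}^{\theta}\right]\le\sum_{\gamma}P_q(\gamma)^{\theta}\,\E\!\left[e^{\theta\beta H(\gamma)}\right]=\Big(\sum_{x}q(x)^{\theta}\Big)^{n}\mgf(\theta\beta)^{n},
\]
both sums being finite for $\theta$ near $1$ because \eqref{eq:assumptions on both mgfs} forces faster-than-exponential tails on $q$. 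Taking logarithms, dividing by $\theta n$, and using concavity of $\log$ gives $F_q(\beta)=\lim_n\frac1n\E[\log Z_{n,\beta,q}]\le\phi(\theta)$ with $\phi(\theta):=\frac1\theta\log\sum_x q(x)^\theta+\frac1\theta\logmgf(\theta\beta)$. One checks $\phi(1)=\logmgf(\beta)$ and, differentiating and using \eqref{eq:gammatwo quantity used in strong disorder condition}, $\phi'(1)=\relativeHqp-\entropy(q)$. Hence $\relativeHqp>\entropy(q)$ makes $\phi'(1)>0$, so $\phi(\theta)<\logmgf(\beta)$ for $\theta$ slightly below $1$ and therefore $F_q(\beta)<\logmgf(\beta)$. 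For $q$ the nearest-neighbor walk this recovers Proposition \ref{prop:condition on gamma 2 for strong disorder}, since there $\entropy(q)=\log(2d)$.

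Finally I would compute the entropy of the tilt in the large-field limit. From \eqref{eq:transition probability in direction h}, $q(\lambda h,x)=e^{\lambda\beta x\cdot h}p(x)/\mgfrw_p(\lambda\beta h)$; with $m=\max_{x\in\supp(p)}h\cdot x$ attained on $K(h)$ by hypothesis (1), factoring out $e^{\lambda\beta m}$ shows the tilted kernel concentrates on $K(h)$, with $q(\lambda h,x)\to p(x)/p(K(h))$ for $x\in K(h)$ and $\to0$ otherwise; consequently $\entropy(q(\lambda h))\to\entropy_{K(h)}(p)$. The conceptual heart of the argument is that $\relativeHqp$ depends only on $\beta$ and the weight law and is insensitive to the field, whereas the tilt drives the walk entropy down toward $\entropy_{K(h)}(p)$; so hypothesis (2) yields $\relativeHqp>\entropy(q(\lambda h))$ for all large $\lambda$, and the criterion of the previous paragraph, applied to the fixed kernel $q(\lambda h)$, gives $F_{q(\lambda h)}(\beta)<\logmgf(\beta)$, i.e.\ $\lambda h\in\strong_\beta$.

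The main obstacle is the rigor surrounding the two limits. First, one must verify that the Rényi-type sum $\sum_x q(\lambda h,x)^\theta$ is finite on a left-neighborhood of $\theta=1$ and that the entropy convergence is genuine and not merely pointwise; both are controlled by the moment assumption \eqref{eq:assumptions on both mgfs} (and are automatic when $p$ has finite range, the case relevant to Theorem \ref{thm:simultaneosly in weak and strong disorder}). Second, one must justify $F_q(\beta)=\lim_n\frac1n\E[\log Z_{n,\beta,q}]$ for the tilted kernels, which is exactly the content of Proposition \ref{prop:existence and convexity of gpl}. Crucially, the argument never interchanges $\lambda\to\infty$ with the free-energy limit: the fixed-kernel criterion is invoked separately at each large $\lambda$, which is what keeps the estimate clean.
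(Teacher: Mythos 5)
Your proposal is correct and follows essentially the same route as the paper: a Kahane--Peyri\`ere fractional-moment criterion showing that $\relativeHqp > \entropy(q)$ forces low temperature (the paper's Proposition \ref{prop:entropy condition for strong disorder}), combined with the entropy limit $\entropy(q(\lambda h)) \to \entropy_{K(h)}(p)$ proved by dominated convergence (the paper's Lemma \ref{lem:as h goes to infinity the underlying random walk reduces}), invoked at each fixed large $\lambda$. The only differences are cosmetic: you bound $\E[Z_{n,\beta,q}^\theta]$ in one shot over paths and conclude via Jensen, where the paper iterates a one-step estimate $\E[W_n^\theta] \le r(\theta)\,\E[W_{n-1}^\theta]$ and uses Markov plus Borel--Cantelli, and your condition $\phi'(1) = \relativeHqp - \entropy(q) > 0$ is exactly the paper's condition $\frac{d}{d\theta}\log r(\theta)\big\vert_{\theta=1} > 0$, since $\log r(\theta) = \theta\left(\phi(\theta) - \logmgf(\beta)\right)$.
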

A standard condition implying strong disorder at fixed $(\beta,h)$ is $\relativeHqp > \entropy(q(h))$ (see Proposition \ref{prop:entropy condition for strong disorder}), which compares an entropy of the weights to the entropy of the underlying random walk. For $(\beta,\lambda h)$, as $\lambda \to \infty$, this condition transforms into \eqref{eq:gammatwo condition for general random walks}. Although the inequality in \eqref{eq:gammatwo condition for general random walks} looks hard-to-verify, it is trivial when $K(h)$ is a singleton, since the conditional Shannon entropy is $0$. The finite range assumption on $p$ guarantees that this happens for some fields $h$. For example, if $\supp(p) = \{z_1,\ldots,z_k\}$, it is easy to see that
\begin{equation}
    D \subset \R^d \setminus \{ h \colon |K(h)| = 1 \} \subset \bigcup_{1 \leq i < j \leq k} \{ h \colon h \cdot z_i = h \cdot z_j \}.
    \label{eq:defining the codimension one bad set of fields}
\end{equation}
Lemma \ref{lem:simultaneous weak and strong disorder for finite range walks} has the details.

\begin{rem}
    The intuitive reason for strong disorder is the following. Suppose $p$ is the standard simple random walk. The external field $h$ gives the walk a drift. When the drift is really large, for example, when $h = K e_1$ with $K \gg 1$, it makes the walk effectively zero dimensional; i.e., it makes it highly likely to just step in the $+e_1$ direction. In two dimensions or lower, the polymer is always in strong disorder (see Theorem \ref{thm:original l2 theorem for polymer}). The set $K(h)$ in Theorem \ref{thm:strong disorder statement} more or less determines the ``effective dimension'' of the walk with field $\lambda h$ as $\lambda \to \infty$. A stronger version of Theorem \ref{thm:strong disorder statement} ought to be true: when the ``effective dimension'' of the walk with field $h$ is $0, 1 \OR 2$, $\lambda h \in \strong_\beta$ for all large enough $\lambda$. On the other hand, if $h \in D$, $K(h)$ has sufficiently large dimensionality, and $\beta$ is small enough, it is possible that $\lambda h \in \weak_\beta$ for all $\lambda > 0$.
\end{rem}

When $p$ does not have finite range, for example, when $p(x) = C\exp( -|x|^2 )$ (discrete Gaussian), we can no longer implement our strategy to prove the existence of fields $h$ at low temperature. The bigger the drift you add to a Gaussian, the more it stays the same. So Theorem \ref{thm:strong disorder statement} fails to imply strong disorder since $\entropy(q(h)) \geq c > 0$ for all $h \in \R^d$ and therefore \eqref{eq:gammatwo condition for general random walks} does not hold for $\beta$ small enough. In fact, we can show more.
\begin{prop}
Let $p$ be the transition kernel of the discrete Gaussian random walk in $d
\geq 3$. For all $\beta$ small enough and all $h \in \R^d$, the polymer is in weak disorder. For all $\beta$ large enough, and all $h \in \R^d$, the polymer is in strong disorder.
\label{prop:gaussian random walk polymer in weak disorder for all h}
\end{prop}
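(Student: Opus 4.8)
The plan rests on a single structural observation: for the discrete Gaussian kernel $p(x) \propto e^{-|x|^2}$, completing the square in the tilted kernel \eqref{eq:transition probability in direction h} gives $q(h,x) \propto e^{\beta h\cdot x - |x|^2} \propto e^{-|x - c|^2}$ with $c = \beta h/2$, so $q(h)$ is again a discrete Gaussian, merely recentered at $c$. Write $\mu_c$ for this law. Two invariances drive everything. First, $\mu_{c+v}$ is the translate of $\mu_c$ by $v$ for every $v \in \Z^d$, so the Shannon entropy is unchanged: $\entropy(\mu_{c+v}) = \entropy(\mu_c)$. Second, the increment law of the difference walk, $r_c := \mu_c * \widetilde{\mu_c}$ with $\widetilde{\mu}(x) = \mu(-x)$, satisfies $r_{c+v} = r_c$ exactly, because translating $\mu_c$ by $+v$ and $\widetilde{\mu_c}$ by $-v$ leaves the convolution fixed; hence the intersection probability $\pi(q(h))$, which depends only on $r_c$, is also a function of $c$ mod $\Z^d$. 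Since $c \mapsto \mu_c$ is smooth, both $\entropy(\mu_c)$ and $\pi(q(h))$ descend to continuous functions on the compact torus $\mathbb{T}^d = \R^d/\Z^d$. The whole point is that, unlike a finite-range walk, a large drift does not concentrate the Gaussian---it only shifts it---so these quantities stay in a compact range as $h$ ranges over all of $\R^d$.

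For weak disorder I would first show $\pi_* := \sup_{h \in \R^d} \pi(q(h)) = \sup_{c \in \mathbb{T}^d}\pi(\mu_c*\widetilde{\mu_c}) < 1$. For each fixed $c$ the difference walk is symmetric, mean zero, with nondegenerate covariance and Gaussian tails, hence genuinely $d$-dimensional and, as $d \geq 3$, transient, so $\pi(\mu_c*\widetilde{\mu_c}) < 1$. Writing $\pi_c = 1 - 1/G_c$ with $G_c = \int_{\mathbb{T}^d} (1 - \widehat{r_c}(\theta))^{-1}\, d\theta$ the Green's function at the origin, uniformity reduces to $\sup_c G_c < \infty$. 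This follows from two estimates, uniform over the compact torus: (i) the covariance $\Sigma_c$ of $r_c$ is continuous and positive definite, so $\lambda_{\min}(\Sigma_c)$ is bounded below and $1 - \widehat{r_c}(\theta) \geq c_0|\theta|^2$ near $\theta = 0$, which is integrable precisely because $d \geq 3$; and (ii) full support of $\mu_c$ makes $r_c$ aperiodic, so $\widehat{r_c}(\theta) = |\widehat{\mu_c}(\theta)|^2 < 1$ for $\theta \neq 0$, and continuity plus compactness give $\sup_{c,\,|\theta|\geq\delta}\widehat{r_c}(\theta) < 1$. Hence $\pi_* < 1$. Since $\mgf_2(\beta) \to 1$ as $\beta \to 0^+$ and $\mgf_2$ is continuous, there is $\beta_0 > 0$ with $\mgf_2(\beta) < \pi_*^{-1} \leq \pi(q(h))^{-1}$ for every $h$ whenever $\beta < \beta_0$. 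Applying \Thmref{thm:original l2 theorem for polymer} to the kernel $q(h)$ then yields $W_{\infty,\beta}(h) > 0$ a.s., i.e. $h \in \weak_\beta$, for all $h$.

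For strong disorder I would use the entropy sufficient condition $\relativeHqp > \entropy(q(h))$ of \Propref{prop:entropy condition for strong disorder}. By the first invariance $\entropy(q(h)) = \entropy(\mu_{\beta h/2})$ depends only on $\beta h/2$ mod $\Z^d$, so $H_* := \sup_{h\in\R^d}\entropy(q(h)) = \sup_{c\in\mathbb{T}^d}\entropy(\mu_c) < \infty$ by continuity and compactness. On the other hand $\relativeHqp = \beta\logmgf'(\beta) - \logmgf(\beta)$ is nondecreasing, since its derivative is $\beta\,\Var_{\mathbb{Q}_\beta}(\w) \geq 0$, and for the weights considered it tends to $+\infty$ as $\beta\to\infty$. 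Thus there is $\beta_1$ with $\relativeHqp > H_* \geq \entropy(q(h))$ for all $h$ once $\beta > \beta_1$, and strong disorder holds for every field.

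The crux is the uniformity in $h$, and the technical heart is the uniform bound $\sup_{c \in \mathbb{T}^d} G_c < \infty$ that upgrades pointwise transience to $\pi_* < 1$; everything else---the recentering identity, the two translation invariances, and the reduction to the torus---is what makes this uniformity available for the Gaussian and unavailable for finite-range walks, where a diverging drift genuinely lowers the effective dimension (cf. \Thmref{thm:strong disorder statement}). I would also flag that the growth $\relativeHqp \to \infty$ is the one place the weight law enters: it holds for the distributions of interest (e.g. Gaussian weights, or any law without an atom at its essential supremum) and is the same mechanism underlying \Propref{prop:condition on gamma 2 for strong disorder}, so it should be stated as the input needed there rather than derived anew.
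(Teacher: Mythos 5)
Your proposal is correct and is essentially the paper's own argument: both proofs exploit that tilting the discrete Gaussian merely recenters it, so that $\pi(q(h))$ and $\entropy(q(h))$ are periodic in the drift parameter, reduce to a compact fundamental domain by continuity, and then apply the $L^2$ criterion (\Thmref{thm:original l2 theorem for polymer}) with $\mgf_2(\beta)\to 1$ for small $\beta$, and the entropy criterion (\Propref{prop:entropy condition for strong disorder}) together with $\relativeHqp \to \infty$ for large $\beta$. The only differences are in execution: the paper gets uniformity of $\pi(q(h))<1$ by citing the continuity of $\pi(q(\cdot))$ established in the proof of \Thmref{thm:weak disorder holds for small h} rather than your self-contained Green's-function bound, and it bounds $\entropy(q(h))$ by explicit series estimates ($1.01d<\entropy(q(h))<1.85d$) rather than soft compactness --- and both proofs equally depend on the weight-law input behind $\relativeHqp\to\infty$ (no atom at the essential supremum, via Comets' Corollary 2.1), which you rightly flag.
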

A similar result holds for the high and low temperature regimes. This result does not tell us what happens for intermediate $\beta$, and since the proof of Proposition \ref{prop:gaussian random walk polymer in weak disorder for all h} shows that $\entropy(q(h))$ is periodic in $h$, it is conceivable that the polymer fluctuates between weak and strong disorder as $h$ is varied. We discuss this in \Secref{sec:strong disorder}.

Figure \ref{fig:gpl for different values of beta} shows the results of numerical experiments to compute $\gpl{h}$ in $d = 3$. The weights are chosen to be iid $\Uniform[0,1]$, and $p$ is the kernel of the standard nearest-neighbor random walk. Results are shown for $n=500$ steps, and $\beta \in \{1,3,5\}$. We averaged $\log Z_n$ over $100$ samples to estimate $\gpp{x}$ on a $[-n,n]^3$ grid, and used the Legendre transform in \eqref{eq:gpl in terms of gpp} to compute $\gpl{h}$. The GPU accelerated python code may be found \href{https://github.com/arjunkc/busemann-code}{here}\footnote{\href{https://github.com/arjunkc/busemann-code}{https://github.com/arjunkc/busemann-code}}. The simulation can be run for larger $n$ with more RAM, more patience, and/or cleverer code. 
\begin{figure}[ht]
\includegraphics[width=4.5cm]{./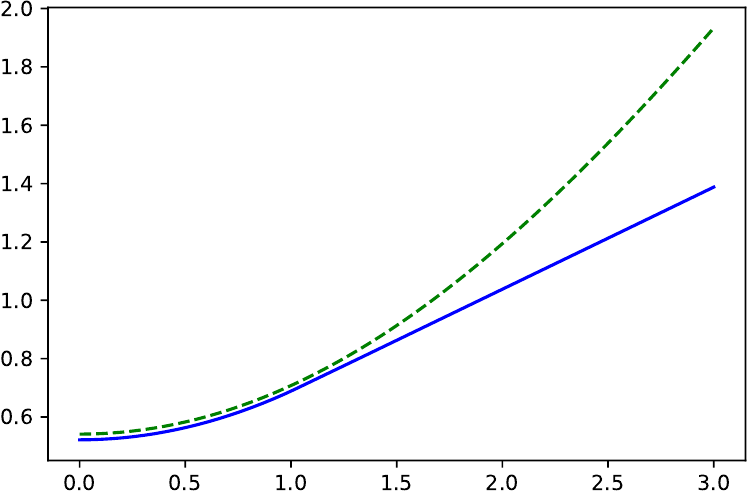}
\includegraphics[width=4.5cm]{./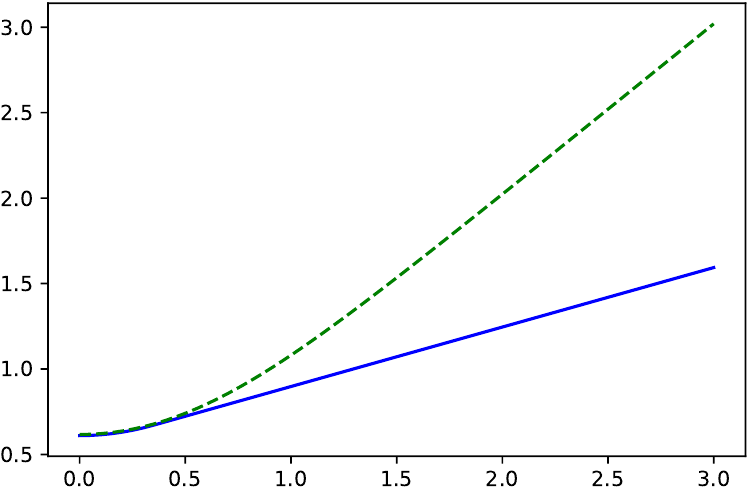}
\includegraphics[width=4.5cm]{./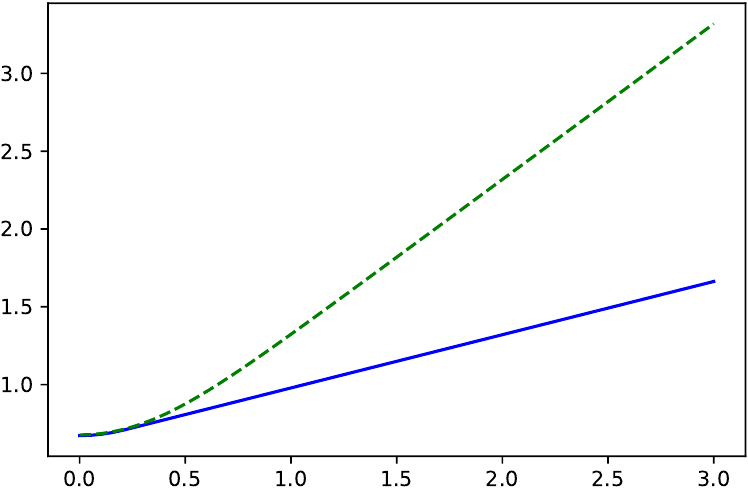}
\caption{
$\gpl{h}$ is in blue (solid), and the RHS of~\eqref{eq:annealed bound for gpl} is in green (dotted), each as a function of $h = t e_1$ 
From left to right, $\beta$ takes the values $1,3,\AND 5$.}
\label{fig:gpl for different values of beta}
\end{figure}
In these simulations, we can see a high temperature regime for $h$ near $0$ where $\gpl{h}$ coincides with the annealed bound. Then, the free energy smoothly transitions from the high-temperature regime to the low-temperature regime as $\gpl{h}$ becomes strictly smaller than the annealed bound. While the $\beta = 3$ and $\beta=5$ graphs appear similar, it seems clear that the phase transition happens at smaller $h$ for larger $\beta$. Our next few results indicate why this is true.
\begin{thm}
   The function $\beta \mapsto \log Z_n(\beta,h/\beta)/n - \lambda(\beta)$ is a nonincreasing function, and hence there is a critical $\beta_c(h)$ (resp. $\overline{\beta}_c(h)$) such that the polymer is at low temperature (resp. strong disorder) for $(\beta,h/\beta)$ when $\beta > \beta_c(h)$, and at high temperature (resp. weak disorder) for $\beta < \beta_c(h)$. 
   \label{thm:monotonicity}
\end{thm}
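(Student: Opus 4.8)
The plan is to prove that the (deterministic) limiting free energy
$m(\beta):=\lim_{n}\tfrac1n\log Z_n(\beta,h/\beta)-\logmgf(\beta)=\beta\gpl{h/\beta}-\logmgf(\beta)$
is nonincreasing in $\beta$; this is the content of the statement once it is read in the $n\to\infty$ limit, where the quantity is deterministic (the per-realization finite-$n$ function is genuinely not monotone, but its a.s./$L^1$ limit is). Since $\tfrac1n\log Z_n(\beta,h/\beta)\to\beta\gpl{h/\beta}$ a.s.\ and in $L^1$ by Proposition~\ref{prop:existence and convexity of gpl}, it suffices to show the \emph{averaged} finite-$n$ function $\beta\mapsto\tfrac1n\E[\log Z_n(\beta,h/\beta)]-\logmgf(\beta)$ is nonincreasing and then let $n\to\infty$. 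The reparametrization by $h/\beta$ is exactly what makes this tractable: $Z_n(\beta,h/\beta)=E_p[\exp(\beta H(\vx_n)+X_n\cdot h)\mid X_0=\origin]$, so the field factor $e^{X_n\cdot h}$ carries no $\beta$. Differentiating in $\beta$ and interchanging (justified by \eqref{eq:assumptions on both mgfs}) gives $\tfrac{d}{d\beta}\big(\tfrac1n\E[\log Z_n]-\logmgf(\beta)\big)=\tfrac1n\sum_{i=1}^n\big(\E[\langle\omega_{i,X_i}\rangle]-\logmgf'(\beta)\big)$, where $\langle\cdot\rangle$ is the field-tilted Gibbs average. Everything reduces to the single-site bound $\E[\langle\omega_{i,X_i}\rangle]\le\logmgf'(\beta)=\E_{\mathbb{Q}_\beta}[\omega_\origin]$: the quenched mean weight collected along the polymer at a fixed time is at most the annealed tilted mean.

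To prove this I would condition on all weights off time-level $i$. Writing $c_x\ge0$ for the ($\mathcal{H}_i$-measurable, hence independent of $\{\omega_{i,y}\}_y$) product of the to-$(i,x)$ and from-$(i,x)$ partition functions, the Gibbs law of $X_i$ is proportional to $c_x e^{\beta\omega_{i,x}}$, so conditionally $\E[\langle\omega_{i,X_i}\rangle\mid\mathcal{H}_i]=\E[\omega_J]$, where the site $J$ is selected with probability $\propto c_J e^{\beta\omega_J}$ among the i.i.d.\ weights $\{\omega_{i,x}\}$. The claim becomes: the selected weight $\omega_J$ is stochastically dominated by a single tilted weight $\omega^\ast\sim\mathbb{Q}_\beta$. \textbf{The main obstacle} is that the naive route—writing the target as $\Cov(\langle\omega_{i,X_i}\rangle,\text{normalization})\ge0$ and invoking FKG—fails, because the tilted average $\omega_J$ is not a coordinatewise monotone function of the environment. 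The fix is a monotone-likelihood-ratio computation: both marginals are absolutely continuous with respect to the weight law $\Prob$, with Radon--Nikodym densities $w\mapsto e^{\beta w}/\mgf(\beta)$ for $\omega^\ast$ and $w\mapsto e^{\beta w}\sum_x c_x\,\E[(c_x e^{\beta w}+S_{-x})^{-1}]$ for $\omega_J$, where $S_{-x}=\sum_{y\ne x}c_y e^{\beta\omega_{i,y}}$. Their ratio is proportional to $\big(\sum_x c_x\,\E[(c_x e^{\beta w}+S_{-x})^{-1}]\big)^{-1}$, and since each summand is decreasing in $w$ (as $\beta>0$), this ratio is nondecreasing in $w$. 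Monotone likelihood ratio then gives $\omega_J\preceq_{\mathrm{st}}\omega^\ast$, whence $\E[\omega_J]\le\E[\omega^\ast]=\logmgf'(\beta)$; the required integrability is guaranteed by \eqref{eq:assumptions on both mgfs}.

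Integrating the conditional bound over $\mathcal{H}_i$ and summing over $i$ shows the derivative above is $\le0$, so $\tfrac1n\E[\log Z_n(\beta,h/\beta)]-\logmgf(\beta)$ is nonincreasing; letting $n\to\infty$ yields that $m(\beta)$ is nonincreasing and continuous. The annealed bound \eqref{eq:annealed bound for gpl} reads $m(\beta)\le\logmgfrw(h)$ with equality exactly at high temperature, and $m(0^+)=\logmgfrw(h)$. A nonincreasing function bounded above by a constant that it attains at $\beta=0$ must stay equal to that constant on an initial interval and be strictly below afterwards; this produces $\beta_c(h)=\sup\{\beta:m(\beta)=\logmgfrw(h)\}$, with $(\beta,h/\beta)$ at high temperature for $\beta<\beta_c(h)$ and at low temperature for $\beta>\beta_c(h)$.

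For the weak/strong-disorder threshold $\overline{\beta}_c(h)$, I would run the identical stochastic-domination comparison on the normalized martingale $W_{n,\beta}(h/\beta)$ along the curve $\beta\mapsto(\beta,h/\beta)$—or, equivalently, invoke the classical monotonicity of the weak-disorder region summarized in Theorem~\ref{thm:full characterization of weak disorder bolthausen to comets et al}—to conclude that $\{\beta:h/\beta\in\weak_\beta\}$ is again a down-interval, giving $\overline{\beta}_c(h)$. The whole argument hinges on the single-site inequality of the second paragraph, and the one nonobvious step is precisely the replacement of the failed FKG correlation bound by the monotone-likelihood-ratio domination.
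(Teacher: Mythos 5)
Your proposal is correct, and it reaches the key inequality by a genuinely different route than the paper. Both arguments share the same skeleton: observe that in the parametrization $(\beta,h/\beta)$ the field factor carries no $\beta$ (equivalently, the kernel $q(h/\beta)$ is independent of $\beta$), reduce the theorem to showing that $\beta\mapsto\tfrac1n\E[\log Z_n(\beta,h/\beta)]-\logmgf(\beta)$ is nonincreasing, i.e.\ that $\E\langle H(\vx_n)\rangle_{n,\beta,q(h/\beta)}\le n\,\logmgf'(\beta)$, and then pass to the limit and read off the threshold from the annealed bound. Where you differ is in how that inequality is proved. The paper fixes a random-walk path $\vx_n$, tilts the product measure $\Prob$ along that path by $\exp(\beta H(\vx_n)-n\logmgf(\beta))$, and applies FKG under the tilted measure to the pair $H(\vx_n)$ (coordinatewise increasing in the weights) and $Z_{n,\beta,q(h/\beta)}^{-1}$ (coordinatewise decreasing); Fubini over paths then evaluates the resulting bound to exactly $n\,\logmgf'(\beta)$. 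So your claim that FKG ``fails'' is too strong: it fails only for the naive covariance formulation you describe, and the paper's path-by-path tilting is precisely the fix. Your alternative --- conditioning on all weights off level $i$, computing the conditional law of the Gibbs-selected weight $\w_J$ explicitly, and dominating it by $\mathbb{Q}_\beta$ via a monotone likelihood ratio --- is a valid substitute, and arguably more informative, since it identifies the conditional law of the collected weight and yields a stochastic-domination statement rather than only a bound on its mean; the paper's route is shorter once the tilting trick is seen. For the weak/strong-disorder half, your second suggestion is the right one (and is what the paper implicitly relies on): since $q(h/\beta)$ does not depend on $\beta$, the curve $\beta\mapsto(\beta,h/\beta)$ is just the standard one-parameter polymer with a fixed kernel, so the classical monotonicity of the weak-disorder region applies verbatim and produces $\overline{\beta}_c(h)$. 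Your first suggestion there (running the averaged domination argument on $W_{n,\beta}(h/\beta)$) would not suffice by itself: monotonicity of expectations says nothing about the almost-sure positivity or vanishing of $W_\infty$, which is what weak versus strong disorder requires.
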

\notes{A similar statement applies with strong disorder replacing low temperature.}
In particular, this result shows that if $(\beta_0,h)$ is in the low temperature regime, then $(\beta,h \beta_0/\beta)$ is also in low temperature for $\beta > \beta_0$, and explains why low temperature appears in Figure \ref{fig:gpl for different values of beta} at smaller $h$ for larger $\beta$. This result is proved in Section \ref{sec:monotonicity} using ideas from the classical monotonicity result for $F(\beta) - \logmgf(\beta)$ \citep{MR2271480}.  

\notes{
\begin{rem}
   Since the effect of the field is to modify the underlying random walk \eqref{eq:transition probability in direction h}, the classical results like Theorem \ref{thm:full characterization of weak disorder bolthausen to comets et al} and Proposition \ref{prop:condition on gamma 2 for strong disorder} do apply, at least in the case where $p$ has finite-range. This implies that when $h$ is fixed, there is a $\beta_c(h)$ such that $(\beta,h)$ is in weak disorder when $\beta < \beta_c(h)$ and in strong disorder when $\beta > \beta_c(h)$. Theorem \ref{thm:simultaneosly in weak and strong disorder} implies that with $\beta$ fixed, there is a phase transition in the $h$ parameter which, to us, is quite surprising. In particular, this implies that $\liminf_{|h| \to \infty} \beta_c(h) = 0$, where $|\cdot|$ is the Euclidean norm on $\R^d$.
\end{rem}
Basically this remark is pretty wrong: we had trouble defining $\beta_c(h)$ at all since we only have monotonicy along hyperbolas $\beta \to (\beta,h/\beta)$. For this to work, we would have to change coordinates, and then we would have a bonafide phase transition. Moreover, people complained a lot about the phase transition not being surprising, so we just decided to remove it.}

Theorem \ref{thm:simultaneosly in weak and strong disorder} is proved in two different sections. Section \ref{sec:weak disorder} covers the weak-disorder portion of the theorem in Item \ref{item:weak disorder statement}. It follows a sketch in \citep[Exercise 9.1]{MR3444835} for the large deviation rate function for the polymer endpoint. However, step 2 of \citep[Exercise 9.1]{MR3444835} is nontrivial in the case of infinite range walks, and it deserves a proof that we could not find elsewhere in the literature. Our proof innovates on the ideas present in the proof of the standard local central limit theorem. A form of the estimate required in Item \ref{item:weak disorder statement} of Theorem \ref{thm:simultaneosly in weak and strong disorder} also appears in \citep[Lemma 5.3]{rassoul-agha_quenched_2012}, but again, the assumption $|\supp(p)| < \infty$ appears in an important way in their proof. A similar estimate also appears in \cite[Corollary 1.9]{junk_local_2023}, where again a drift is added to the underlying simple symmetric random walk. They prove that if $p^*(\beta) > 1 + 2/d$ (which is known to be true in the $L^2$ region), weak-disorder is preserved under the addition of small drifts. Our method removes the restriction of $|\supp(p)| < \infty$ in the $L^2$ region.

The strong disorder portion of Theorem \ref{thm:simultaneosly in weak and strong disorder} is covered in Section \ref{sec:strong disorder}. We use the technique of estimating fractional moments for the partition function originally used by \citet{MR0431355} for the multiplicative martingale, and its proof takes up the majority of Section \ref{sec:strong disorder}. \notes{I did not mention Proposition 2.4 in \citep{MR3672833}.}

One of the main consequences of the weak-strong disorder phase transition is that the Gibbs measure on the endpoint on the path changes from Gaussian to localized behavior. We present our generalization of these classical results next: 
\begin{enumerate}[1)]
\item a CLT in the $L^2$ region of $\weak_\beta$; i.e., in the set $\{ h \colon \mgf_2(\beta) < \pi(q(h))^{-1} \}$, and
\item localization of the endpoint for $h \in \strong_\beta$.
\end{enumerate}
The classical CLT for the polymer endpoint under the Gibbs measure is originally due to Bolthausen \citep{MR1006293} when the weights are $\Bernoulli(1/2)$. This was subsequently improved by \citet[eq.\,(2.5)]{MR2073332} using the version of the second moment computation for general weights by \citet{MR1413246}. They are both proved under the assumption of $|\supp(p)| < \infty$. Our generalization of these results involves an additional sub-Gaussianity assumption on $p$ to handle the case when $|\supp(p)| = +\infty$. It is of some interest to weaken this assumption.

For any positive-definite matrix $\Sigma$, we write $\gamma_\Sigma$ for the Gaussian measure with density
\begin{equation}
    \gamma_\Sigma(x) := \frac{1}{(2\pi )^{d/2} (\det \Sigma)^{1/2}} \exp \left( - \frac12 x \cdot \Sigma^{-1} x \right),
    \label{eq:gaussian measure with covariance matrix}
\end{equation}
where $x \cdot y$ is the usual dot product on $\R^d$.
\begin{thm}
    Let $p$ be a random walk kernel whose step distribution is sub-Gaussian; i.e., $\exists C > 0$ s.t.
    \begin{equation}
        E_p[e^{t \cdot |X_1|}] \leq \exp \left( C |t|^2/2 \right),
        \label{eq:subgaussianity assumption on p}
    \end{equation}
    If $\mgf_2(\beta) < \pi(q(h))^{-1}$, then $\forall f \in C(\R^d)$ with at most polynomial growth at $\pm \infty$, we have
    \begin{equation}
        \lim_{n \to \infty} \bigg\la f \left( \frac{X_n - n m_{q(h)}}{\sqrt{n}} \right) \bigg\ra_{n,\beta,q(h)}
        = \int f \left( x \right) d \gamma_{\Sigma(q(h))}(x) \quad \Prob-\almostsurely
        \label{eq:gaussian limit for polynomial functions of endpoint}
    \end{equation}
    where $m_{q(h)} = E_{q(h)}[X_1]$, and $\Sigma_{q(h)}$ is the covariance matrix of one step of the random walk given by
    \begin{equation*}
        \Sigma_{q(h)}(i,j) = E_{q(h)} \left[ \left( (X_1 - m_{q(h)}) \cdot e_i   \right) \left( (X_1 - m_{q(h)}) \cdot e_j \right) \right].
    \end{equation*}
    \label{thm:clt for the endpoint in weak disorder}
\end{thm}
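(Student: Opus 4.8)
The plan is to run the classical second-moment (replica) argument of Bolthausen and Comets--Shiga--Yoshida, replacing every place where finite range of $p$ is used by a uniform estimate furnished by the sub-Gaussianity assumption \eqref{eq:subgaussianity assumption on p}. Throughout write $q=q(h)$, $m=m_{q(h)}$, $\Sigma=\Sigma(q(h))$, $\mgf_2=\mgf_2(\beta)$, and let $X,X'$ be two independent $q$-walks started at $\origin$ with overlap $L_n:=\sum_{i=1}^n\mathbb{1}\{X_i=X_i'\}$. Since $\mgf_2\ge 1$ (Cauchy--Schwarz), the hypothesis $\mgf_2<\pi(q)^{-1}$ forces $\pi(q)<1$; the replica identity $\E[W_{n,\beta}(h)^2]=E_q^{\otimes2}[\mgf_2^{L_n}]$ together with the geometric domination of $L_\infty$ (the content of the proof of \Thmref{thm:original l2 theorem for polymer}) gives $\sup_n\E[W_{n,\beta}(h)^2]=\zeta_\infty<\infty$. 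Hence $W_{n,\beta}(h)\to W_{\infty,\beta}(h)$ a.s.\ and in $L^2$, with $W_{\infty,\beta}(h)>0$ a.s.\ and $\zeta_\infty=\E[W_{\infty,\beta}(h)^2]$.

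Next I reduce the statement. By L\'evy's continuity theorem applied along a countable dense set of frequencies, and using $W_{\infty,\beta}(h)>0$, the claim \eqref{eq:gaussian limit for polynomial functions of endpoint} for bounded continuous $f$ reduces to showing, for each fixed $t\in\R^d$, that $\la \exp(it\cdot(X_n-nm)/\sqrt n)\ra_{n,\beta,q}\to \hat\gamma(t):=\exp(-\tfrac12 t\cdot\Sigma t)$, $\Prob$-a.s. Writing $b_n(x):=\mgf(\beta)^{-n}E_q[e^{\beta H(\vx_n)}\mathbb{1}\{X_n=x\}]$ (so $W_{n,\beta}(h)=\sum_x b_n(x)$) and $e_n(x):=\exp(it\cdot(x-nm)/\sqrt n)$, the Gibbs average equals $\sum_x e_n(x)b_n(x)/W_{n,\beta}(h)$. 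Since the denominator tends to $W_{\infty,\beta}(h)>0$ and $E_q[e_n(X_n)]\to\hat\gamma(t)$ by the multivariate CLT for $q$, it suffices to prove that the centered linear statistic $B_n:=\sum_x\big(e_n(x)-E_q[e_n(X_n)]\big)b_n(x)$ tends to $0$.

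The replica identity upgrades to $\E[b_n(x)b_n(y)]=E_q^{\otimes2}[\mathbb{1}\{X_n=x,X_n'=y\}\,\mgf_2^{L_n}]$, so for bounded continuous $\Phi$,
\[
\E\Big[\textstyle\sum_{x,y}\Phi_n(x,y)b_n(x)b_n(y)\Big]=E_q^{\otimes2}\Big[\mgf_2^{L_n}\Phi\big(\tfrac{X_n-nm}{\sqrt n},\tfrac{X_n'-nm}{\sqrt n}\big)\Big],
\]
where $\Phi_n(x,y)=\Phi((x-nm)/\sqrt n,(y-nm)/\sqrt n)$. The heart of the matter is the decoupling lemma
\[
E_q^{\otimes2}\Big[\mgf_2^{L_n}\Phi\big(\tfrac{X_n-nm}{\sqrt n},\tfrac{X_n'-nm}{\sqrt n}\big)\Big]\longrightarrow \zeta_\infty\iint\Phi\,d\gamma_\Sigma\,d\gamma_\Sigma,
\]
that the intersection weight and the diffusively rescaled endpoints asymptotically decouple, the endpoints becoming independent $\gamma_\Sigma$-Gaussians. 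I would prove this by freezing a large time $T$: the weight $\mgf_2^{L_n}$ is measurable with respect to the first $T$ increments up to an error that is uniformly integrable (dominated by $\mgf_2^{L_\infty}\in L^1$) and small as $T\to\infty$, while the increments on $[T,n]$, rescaled, converge to two independent $\gamma_\Sigma$ and are asymptotically independent of $\mathcal{F}_T$. Taking $\Phi(u,v)\in\{e^{it\cdot u}e^{-it\cdot v},\,e^{it\cdot u},\,e^{-it\cdot v},\,1\}$ and expanding $\E|B_n|^2$ into the four corresponding replica terms yields $\zeta_\infty(\hat\gamma(t)^2-\hat\gamma(t)^2-\hat\gamma(t)^2+\hat\gamma(t)^2)=0$, so $B_n\to0$ in $L^2$. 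I would promote this to a.s.\ convergence, simultaneously over a countable dense set of $t$, by running the estimate along a sparse subsequence $n_k$ with summable $\E|B_{n_k}|^2$ (a local CLT makes the decoupling quantitative) and filling the gaps by continuity of the rescaled averages, the standard subsequence-plus-interpolation scheme. The extension from bounded $f$ to $f$ of polynomial growth then follows from a uniform-integrability bound on $\la |(X_n-nm)/\sqrt n|^r\ra$, obtained from the same identity with $\Phi(u,v)=|u|^r|v|^r$ and the finiteness of all moments of $\gamma_\Sigma$.

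The genuinely new difficulty, and the only place where I use more than the classical scheme, is that $\supp(p)$, hence $\supp(q)$, may be infinite, so none of the finite-range uniform bounds are available. Sub-Gaussianity rescues every step: it is preserved under the tilt defining $q(h)$, since $E_{q(h)}[e^{s|X_1|}]=E_p[e^{s|X_1|+\beta X_1\cdot h}]/\mgfrw_p(\beta h)\le E_p[e^{(|s|+\beta|h|)|X_1|}]/\mgfrw_p(\beta h)$; it guarantees the finite covariance $\Sigma(q(h))$ and the CLT used above; and, most importantly, it provides the uniform moment control needed to pass the diffusive weak limit through the bounded but non-compactly-supported test functions in the decoupling lemma, to obtain the uniform integrability upgrading bounded-$f$ to polynomial-growth $f$, and to make the decoupling quantitative enough for the a.s.\ statement. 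I expect the \textbf{decoupling lemma together with this uniform control}, rather than the algebra of the variance cancellation, to be the main obstacle.
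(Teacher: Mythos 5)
Your route is genuinely different from the paper's, and most of it is sound, but there is a real gap at the almost-sure upgrade. For contrast: the paper (following Comets) proves \eqref{eq:gaussian limit for polynomial functions of endpoint} by the method of moments. It builds space--time harmonic polynomials $\phi_a(n,x)$ as derivatives of $e^{\theta\cdot x-n\rho(\theta)}$ at $\theta=\origin$ (see \eqref{eq:definition of polynomial martingales in weak disorder clt proof}), so that the Gibbs-weighted sums $M_n=E_{q}\left[\phi_a(n,X_n)e^{\beta H(\vx_n)-n\logmgf(\beta)}\right]$ are \emph{exact} martingales for $\weightsF_n$; Doob's inequality plus the second-moment bound of Proposition \ref{prop:growth rate of polynomial martingales in weak disorder} give $\max_{j\le n}|M_j|=O(n^\gamma)$ with $\gamma<k/2$ almost surely, and an induction on the degree of the monomial finishes. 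Sub-Gaussianity enters only once, to prove $E_q^{\otimes 2}\left[|X_n|^{2k}1_{X_n=X_n'}\right]\le Cn^{k-d/2}$. Your scheme ($L^2$ replica bound, characteristic functions, decoupling lemma, four-term cancellation) is instead a replica/characteristic-function argument in the spirit of Bolthausen's original proof. Your decoupling lemma is correct and provable exactly by the freezing argument you sketch, the cancellation giving $B_n\to 0$ in $L^2$ is right, and your observation that sub-Gaussianity survives the tilt defining $q(h)$ is the correct way to handle infinite support.

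The gap is the step you compress into ``the standard subsequence-plus-interpolation scheme.'' First, the rate you can realistically extract from the decoupling (optimizing the freezing time $T$ against the intersection tail $\sum_{j>T}P(X_j=X_j')\lesssim T^{1-d/2}$ and the CLT error $\sqrt{T/n}$) is $\E|B_n|^2\lesssim n^{-\delta}$ with $\delta<1$ (roughly $n^{-1/4}$ in $d=3$), so $\sum_n\E|B_n|^2=\infty$ and you are forced onto a sparse subsequence $n_k$ with polynomially large gaps. Second, and this is the real issue, there is no ``continuity of the rescaled averages'' across a gap: for $n\in(n_k,n_{k+1}]$ the Gibbs average involves fresh disorder in the layers $(n_k,n]$, and a union bound over the block with single-time variances needs $\sum_k(n_{k+1}-n_k)n_k^{-\delta}<\infty$, i.e.\ $\delta>1$, which you do not have. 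Controlling $\max_{n_k\le n\le n_{k+1}}|B_n-B_{n_k}|$ requires a maximal inequality, hence a martingale, and your $B_n$ is not one (both the centering $c_n$ and the scaling $\sqrt{n}$ move with $n$). The same defect appears even earlier: the a.s.\ tightness needed for your L\'evy-continuity reduction, and the a.s.\ uniform integrability needed to pass from bounded $f$ to polynomial growth, require $\limsup_n\la|(X_n-nm)/\sqrt n|^r\ra_{n,\beta,q(h)}<\infty$ almost surely, which does not follow from $\sup_n$ of an expectation. The known repair is to freeze the scaling within each block and work with $\tilde N_n=\mgf(\beta)^{-n}E_q\left[e^{it\cdot(X_n-nm)/\sqrt{n_{k+1}}}e^{\beta H(\vx_n)}\right]\phi(t/\sqrt{n_{k+1}})^{-n}$, $\phi(s)=E_q[e^{is\cdot(X_1-m)}]$, which is an exact $\weightsF_n$-martingale amenable to Doob on each block --- but at that point you have rebuilt, in characteristic-function form, precisely the martingale machinery that the paper's polynomials provide from the start. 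So you misidentify the main obstacle: it is not the decoupling lemma (which is fine) but the passage from $L^2$ to almost-sure convergence along the full sequence, and as written that step fails.
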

\begin{rem}
    Choosing $f(x) = x$, we see that $\lim_{n \to \infty} \big\la \frac{X_n}{n} \big\ra_{n,\beta,q(h)} = E_{q(h)}[X_1] ~\almostsurely$. That is, the endpoint of the polymer is aligned with the drift of the underlying random walk induced by the field. If $p$ has finite range, Theorem \ref{thm:simultaneosly in weak and strong disorder} shows that a low temperature regime exists. This means that the polymer endpoint will be misaligned with the underlying random walk on a nontrivial subset of $h$ in the low temperature regime. 

    Choosing $f(x) = |x|^2$ shows the almost sure diffusive scaling of the endpoint location under the polymer measure.
    \label{rem:misalignment with underlying drift}
\end{rem}
\begin{rem}
    While we have proved a CLT for the endpoint in the $L^2$ region of $\weak_\beta$, it seems clear that one ought to be able to prove a Brownian invariance principle for the entire polymer path in weak disorder, \`a la \citet{MR2271480}. 
\end{rem}
Recall the definition of localization in \eqref{eq:localization definition}. It follows from the proofs of Theorems 5.1 and 5.4 in \citep{MR3444835}, that 
\begin{thm}
If $h \in \strong_{\beta}$, then $\sum_t J_t=\infty$. If, moreover, the polymer is in the low temperature regime, then the polymer endpoint is localized.
\end{thm}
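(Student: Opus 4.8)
The plan is to reduce both assertions to a quantitative control of the \emph{replica overlap}
\[
    I_n := \sum_{z \in \Z^d} \mu_{n-1,\beta,q(h)}(X_n = z)^2,
\]
which satisfies $I_n \le \max_z \mu_{n-1,\beta,q(h)}(X_n = z) = J_n$, so that $\sum_t I_t = \infty$ already forces $\sum_t J_t = \infty$, and $\liminf_n \tfrac1n\sum_{t\le n} I_t > 0$ already forces localization in the sense of \eqref{eq:localization definition}. The engine is the nonnegative martingale $W_{n,\beta}(h)$. Writing $a_z := \mu_{n-1,\beta,q(h)}(X_n = z)$ and $\xi_n := W_{n,\beta}(h)/W_{n-1,\beta}(h)$, a one-step conditioning gives $\xi_n = \mgf(\beta)^{-1}\sum_z a_z e^{\beta\w_{n,z}}$, where the weights $\{\w_{n,z}\}_z$ are independent of $\mathcal{F}_{n-1}$; hence $\E[\xi_n \mid \mathcal{F}_{n-1}] = 1$ and $\Var(\xi_n\mid\mathcal{F}_{n-1}) = (\mgf_2(\beta)-1)\,I_n$. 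I would then take the Doob decomposition $\log W_{n,\beta}(h) = M_n + A_n$, where $A_n := \sum_{k=1}^n \E[\log\xi_k\mid\mathcal{F}_{k-1}]$ is the predictable compensator and $M_n$ is a martingale with predictable quadratic variation $V_n := \sum_{k=1}^n \Var(\log\xi_k\mid\mathcal{F}_{k-1})$.

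The heart of the matter is a pair of single-step estimates, valid with constants $c_1, c_2 > 0$ depending only on $\beta$ and the weight law:
\[
    -c_1 I_n \le \E[\log\xi_n\mid\mathcal{F}_{n-1}] \le 0, \qquad \Var(\log\xi_n\mid\mathcal{F}_{n-1}) \le c_2 I_n.
\]
The upper bound on the mean is just Jensen's inequality. I expect the main obstacle to be the matching lower bound (and the variance bound): although $\log\xi_n \approx (\xi_n-1) - \tfrac12(\xi_n-1)^2$ when $I_n$ is small, $\log\xi_n \to -\infty$ as $\xi_n \to 0$, so one must rule out a large negative contribution from the rare event that all relevant weights $\w_{n,z}$ are atypically negative. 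This is exactly where $\mgf(\beta)<\infty$ for all $\beta$ enters: splitting on $\{\xi_n \ge \tfrac12\}$ versus $\{\xi_n < \tfrac12\}$ and using the exponential moments to bound the small-$\xi_n$ event delivers the estimates above. Crucially these are estimates on a single time step, involving only the probability vector $(a_z)$ and one fresh layer of i.i.d.\ weights; they make no reference to $\supp(p)$ and so hold verbatim when $p$ has infinite range. This is the only point at which the present setting departs from the classical finite-range proofs of \cite[Theorems 5.1 and 5.4]{MR3444835}, and it is what lets their argument run here.

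For the first assertion I argue by contraposition, showing $\{\sum_t I_t < \infty\} \subseteq \{W_{\infty,\beta}(h) > 0\}$ up to a null set. On $\{\sum_t I_t < \infty\}$ the variance bound gives $V_\infty < \infty$, so the $L^2$ martingale $M_n$ converges a.s.; the mean bounds give $-c_1\sum_t I_t \le A_n \le 0$ with $A_n$ nonincreasing, so $A_n$ converges as well. Hence $\log W_{n,\beta}(h)$ converges to a finite limit and $W_{\infty,\beta}(h) > 0$. Since $h \in \strong_\beta$ means $W_{\infty,\beta}(h) = 0$ $\Prob$-a.s., it follows that $\sum_t I_t = \infty$ a.s., and therefore $\sum_t J_t \ge \sum_t I_t = \infty$.

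For the second assertion, low temperature is exactly the strict inequality in \eqref{eq:annealed bound for gpl}, which by Proposition~\ref{prop:existence and convexity of gpl} translates into
\[
    \lim_{n\to\infty}\tfrac1n \log W_{n,\beta}(h) = \beta\,\gpl{h} - \logmgf(\beta) - \logmgfrw(\beta h) =: -\delta < 0, \quad \Prob\text{-a.s.}
\]
Because $I_k \le 1$, the variance bound gives $V_n \le c_2 n$, so the martingale strong law yields $M_n/n \to 0$ a.s.; thus $\tfrac1n A_n \to -\delta$. Feeding in the lower bound $A_n \ge -c_1\sum_{k\le n} I_k$ gives
\[
    \liminf_{n\to\infty}\frac1n\sum_{k=1}^n I_k \;\ge\; -\frac1{c_1}\,\limsup_{n\to\infty}\frac1n A_n \;=\; \frac{\delta}{c_1} \;>\; 0,
\]
and since $J_k \ge I_k$ this is precisely the localization statement $\liminf_n \tfrac1n\sum_{t\le n} J_t > 0$.
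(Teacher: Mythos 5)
Your proof is correct and is in substance the paper's own: the paper establishes this theorem by invoking the proofs of Theorems 5.1 and 5.4 in \citep{MR3444835}, and those proofs are exactly your argument---the Doob decomposition $\log W_{n,\beta}(h)=M_n+A_n$, the Comets--Shiga--Yoshida one-step bounds $-c_1 I_n\le \E[\log \xi_n\mid \mathcal{F}_{n-1}]\le 0$ and $\Var(\log\xi_n\mid\mathcal{F}_{n-1})\le c_2 I_n$ (which involve only the probability vector $(a_z)$ and one fresh layer of weights, hence are insensitive to $\supp(p)$, the very point that lets the paper cite the classical proofs verbatim), martingale convergence on $\{\sum_t I_t<\infty\}$ for the first claim, and the martingale strong law combined with $\tfrac1n\log W_{n,\beta}(h)\to -\delta<0$ for the second, all transferred to $J_n$ via $I_n\le J_n$. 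The one soft spot is your sketch of the one-step bounds: Chebyshev on $\{\xi_n<\tfrac12\}$ as written only yields $O\bigl(I_n\log(1/I_n)\bigr)$, which would not suffice for either claim, and the clean $O(I_n)$ bound needs the sharper lower-tail estimate $\Prob\bigl(\xi_n<\tfrac12\mid\mathcal{F}_{n-1}\bigr)\le e^{-c/J_n}\le C I_n$ (Bernstein-type, using that the increments $a_z\bigl(e^{\beta\w_{n,z}}/\mgf(\beta)-1\bigr)$ are bounded below by $-J_n$ and that $J_n^2\le I_n$), which is precisely the content of the lemma underlying the cited proofs.
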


In this paper, we have started the study of the full-phase diagram of the polymer model. We highlight a couple of interesting questions:
\begin{enumerate}
    \item Can one prove that there is no re-entrant phase transition in $h$, as Figure \ref{fig:gpl for different values of beta} suggests? The classical FKG proof of monotonicity in $\beta$ for $F(\beta) - \logmgf(\beta)$ in \citep{MR2271480} does not appear to work.
    \item Can one define and describe the boundary between $\weak_\beta$ and $\strong_\beta$?
\end{enumerate}

\subsection{Acknowledgements}
The authors would like to thank H.\,Lacoin for a valuable discussion, and F.\,Rassoul-Agha, T.\,Sepp\"al\"ainen and A.\,Yilmaz for comments on a first draft. 

A.\,Krishnan was partially supported by a Simons Collaboration grant 638966. S.\,Mkrtchyan was partially supported by a Simons Collaboration grant 422190. S.\,Neville was partially supported by a NSF RTG 1840234. This research was supported in part through computational resources and services provided by Advanced Research Computing at the University of Michigan, Ann Arbor and by the Center for Integrated Research Computing at the University of Rochester. 

\section{Existence of the free-energy}
\label{sec:shape theorem}

In this section, we prove Proposition \ref{prop:existence and convexity of gpl}. When $p$ is finite-range, there is a ``point-to-point'' analog in \citep[Theorem 2.2]{rassoul-agha_quenched_2013}; this and the duality relationship \eqref{eq:gpl in terms of gpp} imply Proposition \ref{prop:existence and convexity of gpl}. We establish the analogous theorem for the point-to-level free energy simultaneously for all temperatures $\beta \in (0,\infty)$ and external fields $h \in \R^d$.

\begin{proof}
    For each $(\beta,h) \in (0,\infty)\times\Q^d$, the existence of the limit follows directly from the proof of Proposition 2.5 in \citep{MR1996276}, which goes through without modification assuming \eqref{eq:assumptions on both mgfs}. Let 
    $$
    g_n(\beta,h) = \frac{ \log E_p  \left[ \exp\left(  \beta ( H(\vx_n) + h \cdot X_n)  \right) | X_0 = 0 \right] }{ \beta n}.
    $$
    We assume in the following that $X_0 = 0$ to avoid repeating the conditioning in the expectation. Then, it is easy to check that
    \begin{align}
        \nabla_h g_n(\beta,h) 
    & = n^{-1} \frac{ E_p \left[ X_n \exp\left(  \beta ( H(\vx_n) + h \cdot X_n)  \right)  \right] }{ E_p \left[ \exp\left(  \beta ( H(\vx_n) + h \cdot X_n)  \right)  \right] } \label{eq:numerator and denominator in shape theorem} \\
    & = n^{-1} \la X_n \ra_{n,\beta,q(h)}.\nonumber
    \end{align}
    Similarly, 
$\partial_\beta g_n(\beta,h) = n^{-1} \la H(\vx_n)+h\cdot X_n \ra_{n,\beta,q(h)}$.

    \begin{claim}
    \label{claim: derivative of free energy bounded}
     For any fixed $K>0,0<\beta_1<\beta_2$ and for all $|h| \leq K, \beta\in[\beta_1 , \beta_2]$, there is a constant $C$ such that eventually $\Prob$-almost-surely
     \begin{enumerate}
         \item \label{item:derivative of logZn in h bounded} $|\grad_h g_n(\beta,h)| \leq C$, and
         \item \label{item:derivative of logZn in beta bounded} $|\partial_\beta g_n(\beta,h)| \leq C$.
     \end{enumerate}
    \end{claim}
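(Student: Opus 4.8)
The plan is to deduce both derivative bounds from a single uniform control on a jointly convex finite-$n$ free energy, using the elementary fact that a convex function with bounded oscillation on a neighborhood of a point has a bounded gradient at that point. Set $\eta = \beta h$ and define
$$
\phi_n(\beta,\eta) := \frac1n \log E_p\!\left[ \exp\!\big(\beta H(\vx_n) + \eta\cdot X_n\big) \,\middle|\, X_0 = \origin \right],
$$
so that $g_n(\beta,h) = \beta^{-1}\phi_n(\beta,\beta h)$. As the logarithm of the moment generating function of the $\R^{1+d}$-valued vector $(H(\vx_n),X_n)$ under $P_p$, the map $(\beta,\eta)\mapsto\phi_n(\beta,\eta)$ is convex on all of $\R^{1+d}$ (finite everywhere by \eqref{eq:assumptions on both mgfs}). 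By the chain rule, $\grad_h g_n(\beta,h) = \grad_\eta\phi_n(\beta,\beta h)$ and $\partial_\beta g_n(\beta,h) = -\beta^{-2}\phi_n + \beta^{-1}\big(\partial_\beta\phi_n + h\cdot\grad_\eta\phi_n\big)$, all evaluated at $(\beta,\beta h)$. Since $\beta\ge\beta_1>0$ on the region of interest, it suffices to bound $\phi_n$ and $\grad_{(\beta,\eta)}\phi_n$, uniformly and eventually $\Prob$-almost surely, on the compact set $\mathcal{K} := [\beta_1,\beta_2]\times\{|\eta|\le\beta_2 K\}$ containing all relevant $(\beta,\beta h)$. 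By the convex-oscillation fact, the gradient bound follows from a two-sided estimate $|\phi_n|\le C_0$ on a slightly larger compact neighborhood $\mathcal{K}'$ of $\mathcal{K}$, still with $\beta$ bounded away from $0$.

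For the upper bound I would use the annealed identity $\E[\exp(n\phi_n(\beta,\eta))] = \mgf(\beta)^n\,\mgfrw_p(\eta)^n$ (the weights at distinct time slices are independent, so $\E[e^{\beta H}]=\mgf(\beta)^n$ for every fixed path). Markov's inequality gives $\Prob\big(\phi_n(\beta,\eta) > \logmgf(\beta) + \logmgfrw(\eta) + \e\big) \le e^{-\e n}$, which is summable, so Borel--Cantelli yields $\phi_n(\beta,\eta)\le\logmgf(\beta)+\logmgfrw(\eta)+\e$ eventually a.s.\ at any \emph{fixed} $(\beta,\eta)$. To upgrade this to a uniform bound I would enclose $\mathcal{K}'$ in a simplex $S$ with finitely many rational vertices $v_1,\dots,v_m$, apply the pointwise bound simultaneously at the $v_k$ (a finite intersection of full-probability events), and extend to all of $S$ by convexity: writing $(\beta,\eta) = \sum_k\lambda_k v_k$ gives $\phi_n(\beta,\eta)\le\sum_k\lambda_k\phi_n(v_k)\le\max_k(\logmgf+\logmgfrw)(v_k)+\e$.

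For the lower bound, Jensen's inequality applied to $P_p$ gives the deterministic estimate $\phi_n(\beta,\eta)\ge \beta\,n^{-1}E_p[H(\vx_n)] + \eta\cdot m_p$, where $m_p = E_p[X_1\mid X_0=\origin]$ exists by \eqref{eq:assumptions on both mgfs}. Here $n^{-1}E_p[H(\vx_n)] = n^{-1}\sum_{i=1}^n Y_i$ with $Y_i := \sum_x \rho_i(x)\,\w_{i,x}$ and $\rho_i(x) := P_p(X_i=x)$; the $Y_i$ are independent across $i$ (distinct time slices), have mean $\E[\w]$, and variance $\Var(\w)\sum_x\rho_i(x)^2\le\Var(\w)<\infty$. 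Since $\sum_i\Var(Y_i)/i^2<\infty$, Kolmogorov's strong law gives $n^{-1}E_p[H(\vx_n)]\to\E[\w]$ on a single a.s.\ event independent of $(\beta,\eta)$, so $\phi_n(\beta,\eta)\ge \beta(\E[\w]-\e) + \eta\cdot m_p$ eventually a.s., uniformly on $\mathcal{K}'$.

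Combining the two bounds gives $|\phi_n|\le C_0$ on $\mathcal{K}'$ eventually a.s., and the convexity argument then bounds $\grad_{(\beta,\eta)}\phi_n$ on $\mathcal{K}$, which is exactly items \ref{item:derivative of logZn in h bounded} and \ref{item:derivative of logZn in beta bounded}. I expect the main obstacle to be the uniformity of the \emph{upper} bound: Borel--Cantelli is intrinsically pointwise, and a naive union over the uncountable parameter set fails, so the crucial device is reducing to the finitely many vertices of an enclosing polytope via the joint convexity of $\phi_n$ --- the same convexity that underlies Proposition \ref{prop:existence and convexity of gpl}. The lower bound is comparatively soft, being explicit in $(\beta,\eta)$ and obtained from one application of the strong law; it requires only finiteness of the second moment of the weights and of $m_p$, both guaranteed by \eqref{eq:assumptions on both mgfs}.
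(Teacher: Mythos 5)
Your proposal is correct, but it takes a genuinely different route from the paper's proof. The paper differentiates first and then estimates: it writes $\grad_h g_n$ and $\partial_\beta g_n$ as normalized Gibbs averages $n^{-1}\la X_n \ra_{n,\beta,q(h)}$ and $n^{-1}\la H(\vx_n)\ra_{n,\beta,q(h)}$, bounds the numerator by splitting on the event $\{X_n \notin \operatorname{END}(C_0)\}$ and invoking Cram\'er's theorem for the walk, and bounds the denominator from below by restricting to a set $Y_1$ of entropy-typical paths and applying large deviations for the iid weights over the $e^{O(n)}$ paths in $Y_1$; Chebyshev plus Borel--Cantelli then give the eventual almost-sure bounds. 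You instead bound the free energy itself and let convexity do the differentiation: after the substitution $\eta = \beta h$, the finite-$n$ free energy $\phi_n$ is a log-moment-generating function of $(H(\vx_n),X_n)$ under $P_p$, hence jointly convex; the annealed bound (Markov plus Borel--Cantelli applied at the finitely many vertices of an enclosing polytope, extended to the whole polytope by convexity) and the Jensen/Kolmogorov-SLLN lower bound give a two-sided bound on $\phi_n$ on a compact neighborhood, and the standard fact that a convex function with bounded oscillation has bounded (sub)gradients yields both items simultaneously via the chain rule. Your route is softer and shorter: it avoids Cram\'er's theorem and the path-counting argument entirely, the uniformity in $(\beta,h)$ comes for free from convexity rather than from tracking suprema inside expectations, and it handles $|\supp(p)| = \infty$ with no extra effort, which is exactly the difficulty the paper highlights. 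What the paper's harder estimates buy is quantitative control of the Gibbs expectations themselves (e.g.\ $n^{-1}\la X_n\ra_{n,\beta,q(h)}$ is eventually bounded by a constant tied to the walk's rate function), which carries probabilistic meaning of its own; but for the Claim's purpose --- uniform Lipschitz bounds to feed Arzel\`a--Ascoli --- your argument suffices. The one point you should make explicit is differentiability of $\phi_n$ (and hence of $g_n$) for large $n$: finiteness at the polytope vertices plus convexity gives finiteness on an open neighborhood, and a log-MGF finite on an open set is smooth in its interior (alternatively, run the oscillation bound with subgradients); the paper glosses over the same issue when it differentiates under $E_p$.
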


Note that we can write  $\partial_\beta g_n(\beta,h) = n^{-1} \la H(\vx_n)\ra_{n,\beta,q(h)}+h\cdot \grad_h g_n(\beta,h)$, so proving the Claim is equivalent to proving item \ref{item:derivative of logZn in h bounded} and 
     \begin{enumerate}
         \item[(3)] $|n^{-1} \la H(\vx_n)\ra_{n,\beta,q(h)}| \leq C$.
         \label{item:Gibbs expectation of H is bounded}
     \end{enumerate}

Claim \ref{claim: derivative of free energy bounded} shows that $g_n$ is eventually uniformly Lipschitz on $\{(\beta,h) \colon |h| \leq K,\beta_1\leq\beta\leq\beta_2\}$. Since $K,\beta_1$ and $\beta_2$ are arbitrary and $g_n(\beta,h)$ converges on $(\Q\cap(0,\infty)) \times \Q^d$, the Arzela-Ascoli theorem shows that $g_n(\beta,h)$ converges to a $\gpl{h}$ that is continuous on $(0,\infty) \times \R^d$.

Taking the second partial derivatives with respect to $h$, we see that 
\begin{equation*}
\beta^{-1} n\partial_{h_i}\partial_{h_j} g_n(\beta, h) = \la X_n \cdot e_i X_n \cdot e_j \ra_{n,\beta,q(h)} - \la X_n \cdot e_i \ra_{n,\beta,q(h)}\la X_n \cdot e_j \ra_{n,\beta,q(h)},
\end{equation*}
which is a covariance, and thus the Hessian is positive definite. Therefore,  $g_n(\beta,h)$ is convex in $h$, and thus, so is $\gpl{h}$. Similarly, the second partial derivative of $g_n(\beta,h)$ with respect to $\beta$ is the variance of $H(\vx_n)+h\cdot X_n$, and hence $g_n(\beta,h)$ and $\gpl{h}$ are convex in $\beta$ as well. The Hessian of $g_n(\beta,h/\beta)$ is again a covariance matrix, which implies the joint convexity of $\gpl{h/\beta}$ in $(\beta,h)$.

We first prove item \ref{item:derivative of logZn in h bounded} of the claim. When $p$ is finite-range, this is obvious; in the general case, we will use the fact that both $\mgf(s)$ and $\mgfrw_p(t)$ are finite on their respective domains, and thus, good large deviations estimates are available. 
        
Let the numerator and denominator of the fraction in \eqref{eq:numerator and denominator in shape theorem} be $\operatorname{NUM}$ and $\operatorname{DENOM}$ respectively. To prove item \ref{item:derivative of logZn in h bounded} we bound the $\operatorname{NUM}$ from above and $\operatorname{DENOM}$ from below on an event with high probability. Let
    \begin{equation*}
    \operatorname{END}(C_0) = \{ y \in \Z^d \colon |y - n E_p[X_1] | \leq C_0 n \}.
    \end{equation*}
We split up the expectation in the numerator into two terms,
    \begin{align}
        \operatorname{\operatorname{NUM}} 
        & = E_p \left[ X_n \exp\left(  \beta ( H(\vx_n) + h \cdot X_n)  \right) (1_{\operatorname{END}(C_0)} + 1_{\operatorname{END}(C_0)^c})  \right]  
        \label{eq:definition of numerator in upper bound of derivative log Zn}
    \end{align}
    and note that the second term is the one we must control since $X_n$ is well-behaved in the first. Call this term $T_2$. We have
    \begin{align*}
       |T_2| & \leq E_p \left[ |X_n| \exp\left(  \beta ( H(\vx_n) + h \cdot X_n)  \right)  1_{\operatorname{END}(C_0)^c}\right]  \\
       & \leq E_p \left[  \exp\left(  \beta  H(\vx_n) +  (\beta_2 K + 1) |X_n|   \right) 1_{\operatorname{END}(C_0)^c}  \right]\\
       & =: \hat{T}_2
    \end{align*} 
    where we have used H\"older to bound the $h \cdot X_n$ term, the condition $|h| \leq K$, and the inequality $x<e^x$ to absorb the term $|X_n|$ into the exponential. Now $\hat{T}_2$ is independent of $h$ and much easier to control. Taking expectation, we get
    \begin{align*}
        \E\left[\sup_{\beta\in[\beta_1,\beta_2]} \hat{T}_2\right] 
        & =  E_p \left[ \E\left[\sup_{\beta\in[\beta_1,\beta_2]}\exp(\beta H(\vx_n))\right] \exp\left(   (\beta_2 K + 1) |X_n|   \right) 1_{\operatorname{END}(C_0)^c}  \right].
    \end{align*} 
    Splitting the inner expectation into the events $H(\vx_n)>0$ and $H(\vx_n)<0$, we get 
    \begin{equation*}
    \E\left[\sup_{\beta\in[\beta_1,\beta_2]}\exp(\beta H(\vx_n))\right]\leq \mgf(\beta_1)^n+\mgf(\beta_2)^n .
    \end{equation*}
    Taking this term out, and using the Cauchy-Schwartz inequality for the remaining $E_p$-expectation, we obtain 
    \begin{align}
        \E\left[\sup_{\beta\in[\beta_1,\beta_2]} \hat{T}_2\right] 
        & \leq  (\mgf(\beta_1)^n+\mgf(\beta_2)^n) \overline{M}_p(d (\beta_2 K + 1))^{n/2} P_p( \operatorname{END}(C_0)^c )^{1/2}. \label{eq:bound needed for numerator in shape theorem}
    \end{align} 
    In the previous line, $\overline{M}_p$ is given by
    \begin{align*}
        E_p [ \exp( C |X_1|) ]  
        & \leq E_p [ \exp( C |X_1|_1 ) ]\\
        & \leq \prod_{j=1}^d E_p \left[ \exp( C d |X_1 \cdot e_j| ) \right]^{1/d}\\
        & \leq \max_{i=1,\ldots,d} (M_p(C d e_i) + M_p(-C d e_i)) \\
        & : = \overline{M}_p(Cd),
    \end{align*} 
    where $|\cdot|_1$ is the $\ell^1$ norm. Since $M_p(t) < \infty$ is finite for all $t$, $X_n$ has a good rate function, and $P_p( \operatorname{END}(C_0)^c ) \leq \exp( - n (I(C_0) + o(1) ))$ where $I(C_0)>0$ for all $C_0>0$ and $I(C_0) \to \infty$ as $C_0 \to \infty$ (see Cram\'er's theorem \citep{rassoul2015course}). Since $K$, $\beta_1$ and $\beta_2$ are fixed, the right-hand side of \eqref{eq:bound needed for numerator in shape theorem} can be bounded by $\exp( - C_1 n)$ where $C_1$ can be chosen to be arbitrarily large by choosing $C_0$ to be large. Using Chebyshev's inequality we obtain, 
\begin{equation}
    \Prob(|\hat{T}_2|>\exp(-C_1n/2))\leq \exp(-C_1n/2).
    \label{eq:exponential bound for THat2}
\end{equation}
    
    

    Next, we need a lower bound for the denominator in \eqref{eq:numerator and denominator in shape theorem}. It will be enough for our purposes to bound it by some exponentially decaying function in $n$, as long as the rate is not too small.

    As long as $E_p[ p(X_1)^u ]$ is finite in some interval around the origin, Cram\'er's theorem again gives us a good rate function $I_{1}$ such that $I_1(t) > 0$ for all $t \neq 0$ \cite[Exercise 3.8]{rassoul2015course}, and 
    \begin{equation*}
        \frac{1}{n}\log  P_p\left(  \bigg| \sum_{i=1} \left(\log  p (X_{i} - X_{i-1}) - E_p[ \log p(X_1) ] \right) \bigg|  \geq n t \right) \to - I_1(t).
    \end{equation*}
    $E_p[ p(X_1)^u ]$ can be shown to be finite for $u \in (-1,1)$ as follows: Since $\mgfrw_p$ is finite on its domain, Chebyshev's inequality gives $P_p( |X_1| \geq s) \leq \exp(-C_1 s) E_p[\exp(C |X_1|)]$. Then,
    \begin{equation}
    \label{eq:p(X_1)_finite_moment}
        E_p[ p(X_1)^u ] = \sum_{x \in \Z^d} p(x)^{1+u} \leq \sum_{k =0 }^{\infty} C_2 k^{d-1} e^{- C_1(1 + u) k} < \infty.
    \end{equation}
    Thus, there is a set $Y_1$ of random walk paths such that $P_p( Y_1^c) \leq \exp(-n I_1(t)/2)$ and
    \begin{equation*}
        \exp( n ( E_p[ \log p(X_1) ] - t) ) \leq P_p(\vx_n) \leq \exp( n ( E_p[ \log p(X_1) ] + t) ) \quad \forall \vx_n \in Y_1.
    \end{equation*}
    Choosing $t =  -E_p[ \log p(X_1) ]/2$, it follows that $|Y_1| \leq  \exp( n ( - 3 E_p[ \log p(X_1) ]/2) )$. 

    We can control the weight on the exponentially many paths in $Y_1$ using large deviation bounds. Since $\mgf(\beta)<\infty$, there exists a good rate function $I_2$ for iid sums of weights. So, for any $C>0$, there exists $c>0$ such that 
    \begin{equation}
        \Prob \left ( \bigcup_{\vx_n \in Y_1} \left\{ | H(\vx_n) - n\E[\w_0]|> cn \right\} \right) 
        \leq |Y_1| \exp( - n C). \label{eq:event on which weights are lower bounded in existence proof}
    \end{equation}
By choosing $c$ to be large enough, we can get $C>-E_p[ \log p(X_1) ] + t$, and thus the probability of the event in \eqref{eq:event on which weights are lower bounded in existence proof} decays exponentially in $n$.

Since $\operatorname{DENOM}$ is the $P_p$-expectation of a positive random variable, we can restrict to a smaller set of paths $Y_1 \cap \operatorname{END}(C_2)$. On the complement of the event in \eqref{eq:event on which weights are lower bounded in existence proof} we obtain
    \begin{align*}
        \operatorname{DENOM} 
        & = E_p \left[ \exp( \beta H(\vx_n) + \beta X_n \cdot h ) \right] \\
        & \geq E_p\left(\exp( n \beta ( \E[\w_0] - c)) \exp(- n \beta K (C_2+|E_p[X_1]|)) 1_{Y_1 \cap \operatorname{END}(C_2)} \right)\nonumber \\
        & \geq \frac 12 \exp( n \beta ( \E[\w_0] - c- K (C_2+|E_p[X_1]|)),     \end{align*}
    where we have again used the H\"older inequality to control $h \cdot X_n$ for $X_n \in \operatorname{END}(C_2)$, and $P_p(Y_1 \cap \operatorname{END}(C_2))\geq \frac 12$, which holds since both $Y_1$ and $\operatorname{END}(C_2)$ have complements with exponentially small probabilities. 
  Note that here $C_2$ is an arbitrary positive constant and $c>0$ depends on the constant $C$ in \eqref{eq:event on which weights are lower bounded in existence proof}, so there exists a constant $C_3>0$, independent of $C_0$ and $C_1$, such that with probability close to $1$, we have
    \begin{align}
        \operatorname{DENOM} 
        & \geq  \exp( -n \beta C_3). \label{eq:lower bound on denominator in existence proof}
    \end{align}

Combining \eqref{eq:exponential bound for THat2} and \eqref{eq:lower bound on denominator in existence proof}, on the complement of the union of the events in \eqref{eq:exponential bound for THat2} and \eqref{eq:event on which weights are lower bounded in existence proof} we obtain 
\begin{align*}
\left|\frac{\operatorname{NUM}}{\operatorname{DENOM}}\right|&\leq (C_0+|E_p[X_1]|)n+\frac{|\hat{T}_2|}{\operatorname{DENOM}}\\&\leq (C_0+|E_p[X_1]|)n+\exp(-(C_1/2-\beta C_3)n).
\end{align*}
Dividing by $n$ and using the Borel-Cantelli lemma, we get $|\nabla_h g_n(\beta,h)| \leq 2C_0$ eventually almost-surely.

    The proof of item \ref{item:Gibbs expectation of H is bounded} of the claim is similar. The lower bound for the denominator is identical. As in \eqref{eq:definition of numerator in upper bound of derivative log Zn}, we write the numerator of $\la H(\vx_n)\ra_{n,\beta,q(h)}$ as
    \begin{multline}
        E_p \left[ H(\vx_n) \exp\left(  \beta ( H(\vx_n) + h \cdot X_n)  \right) 1_{Y_1}  \right]\\+E_p \left[ H(\vx_n) \exp\left(  \beta ( H(\vx_n) + h \cdot X_n)  \right)  1_{Y_1^c}  \right] . 
        \label{eq:definition of numerator in upper bound of derivative log Zn in beta}
    \end{multline}
Using the inequality $|H(\vx_n)|\leq e^{H(\vx_n)} + e^{-H(\vx_n)}$ and $|h|\leq K$, the second term in \eqref{eq:definition of numerator in upper bound of derivative log Zn in beta} can be bounded from above by
\begin{equation*}
\hat{T}_3= E_p \left[  \exp\left(  (\beta+1)  H(\vx_n) +  \beta_2 K  |X_n|   \right)1_{Y_1^c}  \right] +
E_p \left[  \exp\left(  (\beta-1)  H(\vx_n) +  \beta_2 K  |X_n|   \right)1_{Y_1^c}  \right].
\end{equation*}
As in \eqref{eq:bound needed for numerator in shape theorem} we obtain 
    \begin{align*}
        \E\left[\sup_{\beta\in[\beta_1,\beta_2]} \hat{T}_3\right] 
        & \leq  4 \left( \max_{\beta \in [\beta_1-1,\beta_2+1]\}} \mgf(\beta)^n \right) \overline{M}_p(d \beta_2 K )^{n/2} P_p(Y_1^c )^{1/2}, \label{eq:bound needed for numerator in shape theorem}
    \end{align*} 
which is exponentially small in $n$ if the parameter $t$ in the definition of $Y_1$ is large enough. Again, as in \eqref{eq:exponential bound for THat2}, Chebyshev's inequality implies that $|\hat{T}_3|$ is exponentially small with probability exponentially close to $1$.

To bound the first term in \eqref{eq:definition of numerator in upper bound of derivative log Zn in beta} with high probability, note that on the complement of the event in \eqref{eq:event on which weights are lower bounded in existence proof}, for all $\vx_n\in Y_1$ we have $|H(\vx_n)|\leq(c+|\E[\w_0]|)n$ which implies that the first term in \eqref{eq:definition of numerator in upper bound of derivative log Zn in beta} is bounded by $(c+|\E[\w_0]|)n\operatorname{DENOM}$. Combining the bounds for both terms in \eqref{eq:definition of numerator in upper bound of derivative log Zn in beta} gives the necessary bound for the numerator in item \ref{item:Gibbs expectation of H is bounded}.

    This completes the proof of the claim and the proposition, establishing existence and convexity of $\gpl{h}$. 
\end{proof}

\section{Weak disorder}
\label{sec:weak disorder}
In this section, we compute $\pi(q)$, the intersection probability of two independent copies of the random walk with kernel $q$, so that we can apply the classical second-moment condition \eqref{eq:L2 regime defining condition} to determine if the walk is in weak disorder. This is equivalent to determining the return probability to the origin of the difference of the two independent random walks; let $p$ be the transition kernel of the difference. Then, $\origin \in \supp(p)$, and it is symmetric about the origin; i.e., $x \in \supp(p)$ implies $-x \in \supp(p)$. This implies that the range of the random walk ---nonnegative integer linear combinations of $\supp(p)$--- is the integer span of $\supp(p)$; i.e., it is a lattice. It is a well-known fact that lattices have a basis, and these can be put into a matrix $A$. For example, when the two random walks are standard nearest-neighbor random walks on $\Z^2$, then $\origin \in \supp(p)$, and the integer span of $\supp(p)$ is the lattice $A \Z^d$ where $A = \begin{pmatrix} 1 & 1 \\ 1 & -1 \end{pmatrix}$. 
\begin{define}
    We call the random walk on $\Z^d$ with symmetric kernel $p$ truly $d$-dimensional if the sublattice  $A \mathbb{Z}^d$ obtained by the integer span of the support of $p$ is in fact $d$-dimensional. That is, there is an invertible matrix $A$ with coefficients in $\Z$ such that 
    \begin{equation}
        \Z - \operatorname{span}(\supp(p)) = \left\{ \sum_{i=1}^k a_i z_i \colon a_i \in \supp(p), z_i \in \Z \right\} = A \Z^d.    
      \label{def:truly d dimensional if support is a sublattice of Zd}
\end{equation}
\end{define}
\notes{Do we need a reference for the lattice basis here?}
 
To compute the return probability to the origin, we use the proof of the following generalization of the standard local central-limit theorem. 
\begin{thm}[c.f.\,Theorem 3.5.3\citep{Durrett_2019}]
   Let $p$ be symmetric, truly $d$-dimensional, $\origin \in \supp(p)$, and generate the sublattice $A \Z^d$. Let $m_p = E_p[X_1]$ and $\Sigma$ be the covariance matrix of one step of the random walk under $p$. Then
   \begin{equation*}
       \lim_{n \to \infty} \sup_{x \in A \Z^d} n^{d/2} \left| P_p(S_n = x) - \frac{\det(\Sigma^{-1/2}A)}{(2\pi n)^{d/2}} \exp \left( -\frac{\left( (x - m_p n),\Sigma (x - m_p n)  \right)}{2n}   \right) \right| = 0.
   \end{equation*}
   \label{thm:our version of the local clt}
\end{thm}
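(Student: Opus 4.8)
The plan is to run the classical Fourier-analytic proof of the local CLT, adapted to the lattice $A\Z^d$ and to a possibly nonzero mean (recall that here ``symmetric'' only means $\supp(p)=-\supp(p)$, so $m_p$ need not vanish). As a preliminary reduction one may pass to the linear change of variables $Y_n:=A^{-1}S_n$, a walk on $\Z^d$ whose step support has integer span $A^{-1}(A\Z^d)=\Z^d$ and which is aperiodic because $\origin\in\supp(p)$. One can then either quote the standard aperiodic local CLT on $\Z^d$ for $Y_n$ (mean $A^{-1}m_p$, covariance $A^{-1}\Sigma A^{-T}$) and push it through the substitution $y=A^{-1}x$---a computation that turns the normalization $(\det(A^{-1}\Sigma A^{-T}))^{-1/2}$ into $\det(\Sigma^{-1/2}A)$ and the quadratic form into $(x-m_p n)\cdot\Sigma^{-1}(x-m_p n)/(2n)$---or redo the Fourier argument directly. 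I sketch the direct route below, since its internal estimates (e.g.\ $P_p(S_n=\origin)\sim c\,n^{-d/2}$) are what \Secref{sec:weak disorder} actually reuses.

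Let $\phi(\theta)=E_p[e^{i\theta\cdot X_1}]=\sum_x p(x)e^{i\theta\cdot x}$, which is periodic under the dual lattice $2\pi(A^T)^{-1}\Z^d$. For $x\in A\Z^d$ the lattice Fourier inversion formula reads
\[ P_p(S_n=x)=\frac{|\det A|}{(2\pi)^d}\int_D \phi(\theta)^n e^{-i\theta\cdot x}\,d\theta, \]
where $D$ is a fundamental domain of the dual lattice containing $\origin$, of volume $(2\pi)^d/|\det A|$. Let $g_n$ be the density of $N(m_p n,\,n\Sigma)$, with Fourier transform $\widehat{g_n}(\theta)=\exp(in m_p\cdot\theta-\tfrac n2\theta\cdot\Sigma\theta)$; note $|\det A|\,g_n(x)$ is exactly the asserted Gaussian approximant. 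Since only the modulus-one factor $e^{-i\theta\cdot x}$ depends on $x$, I get the uniform-in-$x$ bound
\[ n^{d/2}\bigl|P_p(S_n=x)-|\det A|\,g_n(x)\bigr|\le \frac{|\det A|}{(2\pi)^d}\Bigl(n^{d/2}\!\!\int_D|\phi(\theta)^n-\widehat{g_n}(\theta)|\,d\theta+n^{d/2}\!\!\int_{\R^d\setminus D}|\widehat{g_n}(\theta)|\,d\theta\Bigr). \]

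The estimate is finished by the standard three-region split, rescaling $\theta=s/\sqrt n$ so that $n^{d/2}d\theta=ds$. Near the origin I Taylor expand $\log\phi(\theta)=im_p\cdot\theta-\tfrac12\theta\cdot\Sigma\theta+o(|\theta|^2)$; here $\Sigma$ is positive definite because $\theta\cdot\Sigma\theta=0$ would force $\theta\cdot x$ constant on $\supp(p)$, and evaluating at $\origin\in\supp(p)$ then forces $\theta\perp\supp(p)$, hence $\theta=0$ since the span is full. Consequently $\phi(s/\sqrt n)^n$ and $\widehat{g_n}(s/\sqrt n)$ carry the common oscillatory factor $e^{i\sqrt n\,m_p\cdot s}$ (which cancels in modulus) and both converge to $\exp(im_p\cdot s-\tfrac12 s\cdot\Sigma s)$, while the local bound $|\phi(\theta)|\le e^{-c|\theta|^2}$ supplies the integrable dominating function $e^{-c|s|^2}$; dominated convergence kills this piece. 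The Gaussian tail $n^{d/2}\int_{|\theta|\ge\delta}|\widehat{g_n}|$ is exponentially small.

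The genuinely load-bearing step is the region $D\setminus B(\origin,\delta)$, where I must show $|\phi(\theta)|<1$ for every $\theta\in D\setminus\{\origin\}$. This is exactly where $\origin\in\supp(p)$ enters: equality $|\phi(\theta)|=1$ forces $e^{i\theta\cdot x}$ to be constant on $\supp(p)$, and this constant is $1$ because $\origin\in\supp(p)$, whence $\theta\cdot z\in 2\pi\Z$ for all $z$ in the integer span $A\Z^d$, i.e.\ $\theta\in 2\pi(A^T)^{-1}\Z^d$, which meets $D$ only at $\origin$. Compactness of $\overline D\setminus B(\origin,\delta)$ and continuity of $\phi$ then yield $\sup|\phi|=\rho<1$ there, so this region contributes at most $n^{d/2}\,\mathrm{vol}(D)\,\rho^n\to 0$. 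I expect this aperiodicity/strict-maximum argument, together with the bookkeeping that identifies the lattice normalization $\det(\Sigma^{-1/2}A)$ and the quadratic form $\Sigma^{-1}$, to be the only nonroutine parts; the rest is the classical local CLT proof verbatim.
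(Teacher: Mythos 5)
Your proof is correct and takes essentially the approach the paper intends: the paper does not actually prove Theorem~\ref{thm:our version of the local clt} (it cites it as standard, c.f.\ Durrett), but the machinery you use---Fourier inversion restricted to the lattice via $A^{-1}$, the aperiodicity argument that $|\phi(\theta)|=1$ forces $\theta$ into the dual lattice because $\origin\in\supp(p)$, and the Taylor-expansion/dominated-convergence bound near the origin---is exactly what the paper deploys when it ``follows and modifies'' this proof for Theorem~\ref{thm:weak disorder holds for small h}. One further point in your favor: your quadratic form $(x-m_pn)\cdot\Sigma^{-1}(x-m_pn)/(2n)$ is the correct one, whereas the paper's statement writes $\Sigma$ where $\Sigma^{-1}$ is meant.
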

We do not prove \ref{thm:our version of the local clt} since it is standard, but will follow and modify its proof for the next theorem.
\begin{thm}
    Suppose $d \geq 3$, and $\beta > 0$. Assume that the condition for weak disorder in \eqref{eq:L2 regime defining condition} is satisfied, and that $p$ is truly $d$-dimensional. Then, there is a $C_0 > 0$ such that for all $\norm{h}{} < C_0$, we have $\mgf_2(\beta) < 1/\pi(q(h))$, where $q(h)$ is defined in \eqref{eq:transition probability in direction h}; i.e., the polymer is in the weak disorder regime under the external field $h$.
    \label{thm:weak disorder holds for small h}
\end{thm}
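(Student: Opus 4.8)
The goal is to show weak disorder is stable under small external fields $h$. The strategy is to prove that the intersection probability $\pi(q(h))$ varies continuously with $h$ near $h=0$; since the $L^2$ condition $\mgf_2(\beta) < \pi(p)^{-1}$ holds strictly at $h=0$, continuity will give a neighborhood on which $\mgf_2(\beta) < \pi(q(h))^{-1}$ still holds, and then Theorem \ref{thm:original l2 theorem for polymer} applies directly.

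First I would set up the computation of $\pi(q)$ in terms of a return probability. Let $S_n, S_n'$ be two independent copies of the walk with kernel $q(h)$, and let $p$ denote the transition kernel of the difference $S_n - S_n'$. By symmetry of $p$ about the origin and the usual argument, $\pi(q(h)) = P_p(\exists n \geq 1 \colon S_n = S_n')$ is controlled by the Green's function, and one has the standard identity relating $\pi$ to $\sum_{n \geq 1} P_p(\text{difference walk at } \origin \text{ at time } n)$. The plan is therefore to show that the return probabilities $P_p(\text{difference} = \origin \text{ at time } n)$ depend continuously on $h$, uniformly enough to sum. Here the tilt $q(h)$ introduces a drift into each walk, so the difference walk acquires a drift as well; the key point is that the \emph{difference} of two iid drifted walks recenters around a shifted mean, and the relevant quantity is the return probability to the (moving) expected location.

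The main technical input is Theorem \ref{thm:our version of the local clt}: it gives, uniformly over the lattice $A\Z^d$, the sharp asymptotic $n^{d/2} P_p(S_n = x) \to \frac{\det(\Sigma^{-1/2}A)}{(2\pi)^{d/2}} \exp(-(x - m_p n, \Sigma(x - m_p n))/2n)$. Evaluating at the drifted return point and using $d \geq 3$ so that $\sum_n n^{-d/2} < \infty$, I would argue that the Green's function summed against these Gaussian kernels is finite and depends continuously on the parameters $m_p$ and $\Sigma$ of the difference walk, which in turn depend continuously (indeed real-analytically) on $h$ through $q(\beta,h,\cdot) = e^{\beta x \cdot h} p(x)/\mgfrw_p(\beta h)$. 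Finiteness of $\mgfrw_p$ everywhere (assumption \eqref{eq:assumptions on both mgfs}) guarantees that $m_{q(h)}$ and $\Sigma_{q(h)}$ vary smoothly and stay in the truly-$d$-dimensional regime for small $h$, so the local CLT hypotheses persist. The continuity of $\pi(q(h))$ at $h = 0$ then follows by dominated convergence on the sum, with the tail of the sum controlled uniformly by the $n^{-d/2}$ local-CLT bound and the head by finitely many continuous terms.

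The hard part will be upgrading the pointwise local-CLT asymptotic into a \emph{uniform-in-$h$} summable bound, i.e. establishing a dominating bound of the form $P_p(\text{difference}=\origin \text{ at time } n) \leq C n^{-d/2}$ with $C$ uniform over a neighborhood of $h = 0$, rather than just the per-$h$ limit. The subtlety, flagged in the introduction as the nontrivial step~2 of \citep[Exercise 9.1]{MR3444835}, is that when $|\supp(p)| = \infty$ the error term in the local CLT is not automatically uniform, and the drift point $m_p n$ at which we evaluate moves with $n$. I would handle this by following the characteristic-function proof of Theorem \ref{thm:our version of the local clt} and tracking the dependence of the Fourier-inversion error on $h$: splitting the Fourier integral into a region near the origin where a quantitative Gaussian approximation of $\hat{q}(h)$ applies and a bulk region where $|\hat{q}(h)| \leq 1 - \delta$ uniformly for small $h$ (using that the tilt is a small perturbation of an aperiodic symmetric kernel). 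Establishing that the spectral gap $\delta$ and the Gaussian-approximation constants can be chosen uniformly over a neighborhood of $h = 0$ is the crux; once that is in hand, the summability and continuity of $\pi(q(h))$, and hence the conclusion $\mgf_2(\beta) < \pi(q(h))^{-1}$ for $\|h\| < C_0$, follow.
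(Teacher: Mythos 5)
Your proposal follows essentially the same route as the paper's proof: reduce the theorem to continuity of $\pi(q(h))$ at $h=\origin$ (since $\mgf_2(\beta)$ does not depend on $h$ and the inequality \eqref{eq:L2 regime defining condition} is strict at $h=\origin$), pass to the expected number of returns $R(q(h))=\sum_{n\geq 1}P_{q(h)}(T_n=\origin)=\pi(q(h))/(1-\pi(q(h)))$ of the difference walk $T_n=S_n-S_n'$, prove continuity in $h$ of each term by dominated convergence on characteristic functions, and control the tail of the sum uniformly in $h$ by Fourier analysis: a uniform spectral gap $|\phi_{q(h)}(u)|\leq c<1$ on the bulk region $\delta<\max_i|u_i|\leq\pi$ (this is where the truly $d$-dimensional hypothesis enters) together with a Gaussian bound $|\phi_{q(h)}(u)|\leq e^{-C|u|^2}$ near the origin, summable against $n^{-d/2}$ since $d\geq 3$. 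You also correctly identify the crux, namely that the pointwise local-CLT asymptotic of Theorem \ref{thm:our version of the local clt} is not enough and one must rework its characteristic-function proof to make all constants uniform over a neighborhood of $h=\origin$; this is exactly what the paper does.

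There is, however, one conceptual error you should fix. You assert that ``the tilt $q(h)$ introduces a drift into each walk, so the difference walk acquires a drift as well,'' and that the relevant quantity is ``the return probability to the (moving) expected location.'' This is false: the two copies $S_n$ and $S_n'$ have the \emph{same} kernel $q(h)$, hence identical drift, so the step distribution of $T_n=S_n-S_n'$ is exactly symmetric about the origin with mean zero for every $h$. The relevant quantity is the return probability to the origin itself (as you state correctly earlier in the same paragraph, making your write-up internally inconsistent), and it is precisely this cancellation that makes the whole approach viable --- if $T_n$ genuinely had a drift, $P(T_n=\origin)$ would be exponentially small rather than of order $n^{-d/2}$. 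The error does not propagate, because your Fourier-inversion plan computes $P(T_n=\origin)$ directly and never needs the mean; but the mean-zero property is quietly used in the near-origin estimate (the bound of the form $|\phi_{q(h)}(u)|\leq 1-\tfrac12 E_{q(h)}|u\cdot T_1|^2+E_{q(h)}\min(|u\cdot T_1|^3,2|u\cdot T_1|^2)$ from \citep{Durrett_2019} has the second moment, not the variance, in the quadratic term, which is only legitimate because $E_{q(h)}[T_1]=\origin$), so the symmetry of the difference kernel should be stated explicitly rather than replaced by a fictitious recentering.
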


\begin{proof}[Proof of Theorem \ref{thm:weak disorder holds for small h}]
Since the left-hand side of the inequality $\mgf_2(\beta) < 1/\pi(q(h))$ is independent of $h$, and we are assuming that the inequality \eqref{eq:L2 regime defining condition} holds when $h=\origin$, it is enough to show that $\pi(q(h))$ is continuous (in $h$) at $h=\origin$. Recall that we have
\begin{align*}
\pi(q(h))=&P_{q(h)}(S_n=S_n'\text{ for some }n > 0),
\end{align*}
which can be interpreted to be the return probability of the origin of the random walk $T_n=S_n-S_n'$, where $S_n$ and $S_n'$ are two independent copies of the same random walk. Using the Markov property for this random walk, we have 
\begin{align}
R(q(h)) & := \sum_{n=1}^\infty P_{q(h)}(T_n=\origin)=E_{q(h)}[\text{\# of returns of }T_n] \nonumber \\
&=\sum_{k=1}^\infty k\pi^k(q(h))(1-\pi(q(h))) = \frac{\pi(q(h))}{1-\pi(q(h))}, \label{eq:sum defining expected number of returns of random walk}
\end{align}
where the RHS is to be interpreted as $\infty$ if $\pi(q(h)) = 1$. In $d \geq 3$, since $\pi(q(\origin)) < 1$, the continuity of $\pi(q(h))$ at $h=\origin$ is equivalent to the continuity of $R(q(h))$ at $h=\origin$. Later on in the proof, we show that the tails of the sum defining $R(q(h))$ in \eqref{eq:sum defining expected number of returns of random walk} decay uniformly for $|h|$ small enough; i.e.,  given $\varepsilon>0$, there exists $C_1>0$ and $n_\varepsilon$ such that for all $|h|<C_1$ and $n>n_\varepsilon$, we have $\sum_{n>n_\varepsilon}P_{q(h)}(T_n=\bar{x})<\varepsilon/3$. Thus, 
\begin{align}
    |R(q(h)) - R(q(\origin))|
    & = \left|\sum_{n}P_{q(h)}(T_n=\origin)-P_{q(\origin)}(T_{n}=\origin)\right| \nonumber \\
    & < 2\varepsilon/3+\left|\sum_{n<n_\varepsilon}P_{q(h)}(T_n=\origin)-P_{q(\origin)}(T_n=\origin)\right|. \label{eq:inequality in estimate for expected number of intersectionso of RW}
\end{align}
From the definition \eqref{eq:transition probability in direction h} of the transition kernel $q(h)$, 
it is easy to see that $E_{q(h)}[e^{i u T_n}]$ is a continuous function of $h$: Let $B = \{\pm 1\}^d$. If $|h|_{\infty} \leq C$, then $h \cdot x \leq C (\operatorname{sgn}(x_1),\ldots,\operatorname{sgn}(x_n)) \cdot x$. Therefore, 
\begin{align*}
    |E_{q(h)}[e^{i u T_n}]|
    & \leq \mgfrw_p(\beta h)^{-1} \sum_{x \in \Z^d} p(x) e^{\beta h \cdot x} \\ 
    & \leq \mgfrw_p(\beta h)^{-1} \sum_{x \in \Z^d} p(x) \sum_{y \in B} e^{C \beta y \cdot x} = \mgfrw_p(\beta h)^{-1} \sum_{y \in B} \mgfrw_p(C \beta y)  
\end{align*}
If $h_k \to h$ and $|h_k|_\infty \leq C$, the dominated convergence theorem implies $E_{q(h_k)}[e^{i u T_n}] \to E_{q(h)}[e^{i u T_n}]$. From the Levy continuity theorem, it follows that $P_{q(h)}(T_n = \origin)$ is continuous in $h$, and thus, the sum in \eqref{eq:inequality in estimate for expected number of intersectionso of RW} converges to zero when $h\to \origin$. So $\exists C_2>0$ such that $\forall |h|<C_2$, we have $|\sum_{n<n_\varepsilon}P_{q(h)}(T_n=\origin)-P_{q(\origin)}(T_n=\origin)|<\varepsilon/3$. Setting $C_0=\min(C_1,C_2)$ we obtain the desired continuity of $R(q(h))$.

Next, we establish the uniform in $h$ bound on the tail of the sum in \eqref{eq:sum defining expected number of returns of random walk}, by using ideas from the proof of the local limit theorem (see e.g. \cite{MR2722836}). Consider the $\mathbb{Z}$-span of the support of the random walk $T_{1}$. By assumption, it forms a sublattice of $\mathbb{Z}^d$ which has dimension $d$. It follows that there exists an invertible matrix $A$ with coefficients in $\mathbb{Z}$ such that the sublattice in question is equal to $A\mathbb{Z}^d$. Note that different choices of bases for the sub-lattice correspond to different matrices $A$, but this will not play a major role in the proof. 

We have the identity 
$$
1_{\{T_1=\bar{x}\}}=1_{\{A^{-1}T_1=A^{-1}\bar{x}\}}=\frac{1}{(2\pi)^d}\int_{-\pi}^{\pi}\cdots \int_{-\pi}^{\pi}e^{iA^{-1}T_1\cdot u}e^{-iA^{-1}x\cdot u}du.
$$ Taking expectation, we get
\begin{equation*}
    P_{q(h)}(T_1=\bar{x})=\frac 1{(2\pi)^d}\int_{-\pi }^{\pi }\dots\int_{-\pi}^{\pi }\phi_{q(h)}(u)e^{-iA^{-1}\bar{x}\cdot u}du,
\end{equation*}
where $\phi_{q(h)}$ is the characteristic function of $A^{-1}T_1$. More generally, for the $n$\textsuperscript{th} step of the random walk, after a change of coordinates, we can write 
\begin{equation}
    P_{q(h)}(T_n=\bar{x})=\frac 1{n^{d/2}}\frac 1{(2\pi)^d}\int_{-\pi n^{1/2}}^{\pi n^{1/2}}\dots\int_{-\pi n^{1/2}}^{\pi n^{1/2}}\phi_{q(h)}^n(u/\sqrt n)e^{-i\frac{A^{-1}\bar{x}\cdot u}{\sqrt n}}du.
    \label{eq:probability of hitting the origin after n steps in terms of char function}
\end{equation}
Setting $\bar{x}=\origin$, we get
$$
R(q(h))=\sum_{n=1}^\infty \frac 1{n^{d/2}}\frac 1{(2\pi)^d}\int_{-\pi n^{1/2}}^{\pi n^{1/2}}\dots\int_{-\pi n^{1/2}}^{\pi n^{1/2}}\phi_{q(h)}^n(u/\sqrt n)du.
$$

Since $\phi_{q(h)}$ is a characteristic function, we have $|\phi_{q(h)}(u)|\leq 1$ for all $u\in\mathbb{R}^d$. However, in the region $0<|u_i|\leq \pi$, we must have the strict inequality $|\phi_{q(h)}(u)|<1$. If not, for some $u$ in this region, we would have $|\phi_{q(h)}(u)|=1$. This implies that $e^{iA^{-1}T_1\cdot u}$ is almost surely constant, so $A^{-1}T_1\cdot u\in c+2\pi\mathbb{Z}$ for all possible values of $T_1$ for some constant $c$. Since $T_1=S_1-S_{1}'$, we have that $\origin$ is in the support of $T_1$, and so, without loss of generality $c=0$, giving $A^{-1}T_1\cdot u\in 2\pi\mathbb{Z}$. Since the right-hand side is a lattice, we get the same containment by replacing $T_1$ by the $\mathbb{Z}$-span of its support, namely $A\mathbb{Z}^d$, giving us $A^{-1}A\mathbb{Z}^d\cdot u=\mathbb{Z}^d\cdot u\in 2\pi\mathbb{Z}$. This is impossible for any $u$ such that $0<|u_i|\leq \pi$. Thus, for any $\delta>0$, using the continuity of $\phi_{q(h)}(u)$ in $h$ and $u$, there exists a constant $c\in(0,1)$ such that in the region $|h| \leq C_1$ and $\delta<\max_i |u_i|\leq \pi$, we have $|\phi_{q(h)}(u)|\leq c$. 

Then, we can estimate the integral in \eqref{eq:probability of hitting the origin after n steps in terms of char function} as follows: 
\begin{align*}
&\left|\int_{[-\pi n^{1/2},\pi n^{1/2}]^d} \phi_{q(h)}^n(u/\sqrt n)du\right|
\\&\quad\leq 
\int_{[-\delta n^{1/2},\delta n^{1/2}]^d}\left|\phi_{q(h)}^n(u/\sqrt n)\right|du
+\int_{[-\pi n^{1/2},\pi n^{1/2}]^d\backslash[-\delta n^{1/2},\delta n^{1/2}]^d}\left|\phi_{q(h)}^n(u/\sqrt n)\right|du
\\&\quad\leq \int_{[-\delta n^{1/2},\delta n^{1/2}]^d}\left|\phi_{q(h)}^n(u/\sqrt n)\right|du+
c^n (2\pi)^dn^{d/2}.
\end{align*}

Using \citep[Theorem 3.3.20]{Durrett_2019}, we can write
$$|\phi_{q(h)}(u)|\leq 1-\frac 12E_{q(h)}|u\cdot T_1|^2+E_{q(h)}\min(|u\cdot T_1|^3,2|u\cdot T_1|^2).$$
If $u$ is sufficiently close to the origin, the minimum in the last expectation is taken by the cubic term, and we get 
$$|\phi_{q(h)}(u)|\leq 1-\frac 14E_{q(h)}|u\cdot T_1|^2\leq e^{-\frac 14E_{q(h)}|u\cdot T_1|^2},$$ 
which implies 
$$|\phi^n_{q(h)}(u/\sqrt{n})|\leq e^{-\frac 14E_{q(h)}|u\cdot T_1|^2} \leq e^{-C |u|^2}$$ 
since $q(h)$ is a continuous function of $h$, and $|h| < C_1$. Thus, the tail $n\geq m$ of the series for $R(q(h))$ can be estimated from above by
$$\sum_{n=m}^\infty\frac 1{n^{d/2}}\frac 1{(2\pi)^d}\left(\int_{[-\delta n^{1/2},\delta n^{1/2}]^d}e^{-C |u|^2}du+
c^n (2\pi)^dn^{d/2}\right),$$
which, if $m$ is large enough, is arbitrarily small uniformly in $h$, as needed.
\notes{Basically we use 
    $$
    E_p[ e^{\beta h \cdot (X_1 + X_1')} (u \cdot (X_1 - X_1'))^2 ] 
    $$
    is a continuous function in $h$.
}
\end{proof}

\subsection{Endpoint behavior in weak disorder}
\label{label:endpoint behavior in weak disorder}
In this section, we sketch the proof of Theorem \ref{thm:clt for the endpoint in weak disorder}. It follows the presentation in \citet[Theorem 3.4]{MR3444835} to a T, and so we will merely indicate where changes need to be made for a general underlying walk $p$. The best strategy to read this sketch is to read the version in \citep{MR3444835} first, and then return here to understand the changes. The only place where the transition probability $p$ matters is in the proof of \Propref{prop:growth rate of polynomial martingales in weak disorder} below.

Without loss of generality, we will assume that $m = E_p[X_1] = 0$ since we can always subtract the mean from the random walk and shift the lattice appropriately. Let $\phi_k(n,x)$ be a polynomial in $n$ and $x$ such that $\phi_k(n,X_n)$ is a martingale for the random walk filtration $\rwF_n = \sigma( X_0, \ldots, X_n)$ satisfying the growth condition
\begin{equation}
    \label{eq:growth condition on the polynomial martingales for the random walk}
    |\phi_k(n,x)| \leq C_0 + C_1 |x|^k + C_2 n^{k/2} \quad \forall n,x \in \Z_{\geq 0} \times \Z^d. 
\end{equation}

\begin{prop}[Proposition 3.2 in \citep{MR3444835} or Proposition 3.2.1 in \citep{MR2073332}]
   Suppose $d \geq 3$, $p$ is a random walk kernel whose step distribution is sub-Gaussian, $\mgf_2(\beta) < \pi(p)^{-1}$, and $\phi_k(n,X_n)$ is a polynomial martingale for the random walk filtration $\rwF_n$. Then,
   \begin{equation*}
       M_{n} := E_p\left[ \phi_k(n,X_n) \exp\left(  \beta H(\vx_n) - n \log \logmgf(\beta) \right) \right] 
   \end{equation*}
   is a martingale for the filtration $\weightsF_n =\sigma\left(\{\w_{t,x}\}:{t \leq n, x \in \Z^d}\right)$, and satisfies
   \begin{enumerate}
       \item $\exists \gamma \in [0,k/2)$ such that $\max_{0 \leq j \leq n} |M_{j}| = O(n^\gamma)$ almost surely,
       \item \label{it:second moment of martingale is finite} $\E[ M_{n}^2] = O(b_{n,k})$, where 
           $$
           b_{n,k} = 
           \begin{cases}
              1 & k < d/2 - 1 \\ 
              \log(n) & k = d/2 - 1\\
              n^{k-d/2+1} & k > d/2 -1
           \end{cases}
           $$
   \end{enumerate}
   \label{prop:growth rate of polynomial martingales in weak disorder}
\end{prop}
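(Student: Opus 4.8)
The plan is to carry out the replica second-moment computation of \citep[Proposition~3.2]{MR3444835} (see also \citep[Proposition~3.2.1]{MR2073332}), but reorganized around a telescoping identity for the collision weight, so that the finite-range inputs used there can be replaced by the sub-Gaussian hypothesis \eqref{eq:subgaussianity assumption on p} and the local CLT of Theorem~\ref{thm:our version of the local clt}. First I would verify that $M_n$ is an $\weightsF_n$-martingale: splitting $H(\vx_{n+1}) = H(\vx_n) + \w_{n+1,X_{n+1}}$ and conditioning on $\weightsF_n$, the fresh weight is independent of $\weightsF_n$ with $\E[e^{\beta\w_{n+1,X_{n+1}}}] = \mgf(\beta)$, which cancels one normalization factor, after which the random-walk martingale property $E_p[\phi_k(n+1,X_{n+1})\mid \rwF_n] = \phi_k(n,X_n)$ returns $M_n$ (Fubini being justified by \eqref{eq:assumptions on both mgfs}). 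Squaring, averaging over the environment with two independent copies $\vx_n,\vx_n'$, and using the independence of the weights across sites and times, I obtain
\[
\E[M_n^2] = E_p^{\otimes 2}\!\left[\phi_k(n,X_n)\,\phi_k(n,X_n')\,\lambda^{L_n}\right],\qquad \lambda := \mgf_2(\beta),\quad L_n := \#\{1\le i\le n: X_i=X_i'\},
\]
exactly as in the scalar case $\phi_k\equiv 1$ that defines $W_n$.

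The key manipulation is the telescoping identity $\lambda^{L_n} = 1 + (\lambda-1)\sum_{j=1}^n 1_{\{T_j=\origin\}}\lambda^{L_{j-1}}$ for the difference walk $T_j := X_j - X_j'$, which follows from $\sum_{j=1}^n 1_{\{T_j=\origin\}}\lambda^{L_{j-1}} = (\lambda^{L_n}-1)/(\lambda-1)$. Substituting it, and noting that $1_{\{T_j=\origin\}}\lambda^{L_{j-1}}$ is measurable with respect to the first $j$ steps, I would condition on those steps; the martingale property of $\phi_k$ applied to each copy, the independence of the two walks, and the equality $X_j=X_j'$ on $\{T_j=\origin\}$ collapse the unconstrained tail and give
\[
\E[M_n^2] = \phi_k(0,\origin)^2 + (\lambda-1)\sum_{j=1}^n E_p^{\otimes 2}\!\left[\phi_k(j,X_j)^2\,1_{\{T_j=\origin\}}\,\lambda^{L_{j-1}}\right].
\]
The crucial feature is that the summand is now $n$-independent: there is no leftover $n^{k/2}$ growth from the free evolution of the walks after their last recorded meeting, which is precisely what would break the bound were one to estimate $\phi_k(n,X_n)$ crudely.

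What remains, and what I expect to be the main obstacle, is the estimate of the (nonnegative) summand. Using the growth bound \eqref{eq:growth condition on the polynomial martingales for the random walk} I would replace $\phi_k(j,X_j)^2$ by $C(|X_j|^{2k}+j^k+1)$ and establish
\[
E_p^{\otimes 2}\!\left[(|X_j|^{2k}+j^k)\,1_{\{T_j=\origin\}}\,\lambda^{L_{j-1}}\right] = O\!\left(j^{\,k-d/2}\right).
\]
This needs three ingredients: (i) the local estimate $P_p^{\otimes 2}(T_j=\origin)=O(j^{-d/2})$ for the infinite-range difference walk, supplied by Theorem~\ref{thm:our version of the local clt}; (ii) that conditioning on $\{T_j=\origin\}$ leaves $X_j$ at the diffusive scale $\sqrt{j}$ with good moments, so that $E_p^{\otimes 2}[|X_j|^{2k};\,T_j=\origin]=O(j^{\,k-d/2})$ --- here the sub-Gaussian assumption \eqref{eq:subgaussianity assumption on p} furnishes the uniform moment control that is automatic when $p$ has finite range; and (iii) a decoupling of the endpoint factor from the collision weight $\lambda^{L_{j-1}}$, whose aggregate contribution is finite because $\sum_j 1_{\{T_j=\origin\}}\lambda^{L_{j-1}} = (\lambda^{L_\infty}-1)/(\lambda-1)$ has finite expectation exactly when $\lambda\,\pi(p)<1$, i.e. under \eqref{eq:L2 regime defining condition}. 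Summing the resulting bound gives $\E[M_n^2]=O\big(\sum_{j=1}^n j^{\,k-d/2}\big)$, and the elementary trichotomy of this sum according to the sign of $k-(d/2-1)$ reproduces $b_{n,k}$, yielding item~\ref{it:second moment of martingale is finite}.

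For item~(1), I would apply Doob's $L^2$ maximal inequality to $M_n$ over dyadic blocks $n\in[2^m,2^{m+1}]$, so that $\E[\max_{j\le 2^{m+1}}M_j^2]=O(b_{2^{m+1},k})$, and then invoke Borel--Cantelli. Since $b_{n,k}\le C\,n^{\,k-d/2+1}$ and $d\ge 3$ makes $k-d/2+1<k$, any $\gamma$ with $(k-d/2+1)/2<\gamma<k/2$ renders $b_{2^{m+1},k}\,2^{-2m\gamma}$ summable in $m$; this forces $\max_{j\le n}|M_j|=O(n^\gamma)$ almost surely with $\gamma\in[0,k/2)$, which is item~(1) and completes the proof.
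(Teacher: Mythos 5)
Your proposal is correct and follows essentially the same route as the paper's proof: both reduce everything to Comets's second-moment/replica argument (martingale identity, telescoping in the collision count, Doob plus Borel--Cantelli for the almost-sure growth bound) and both isolate the same single new ingredient needed for infinite-range $p$, namely the collision-moment bound $E_p^{\otimes 2}\left[ |X_j|^{2k} 1_{X_j = X_j'} \right] = O(j^{k-d/2})$, obtained from the uniform local-CLT bound $O(j^{-d/2})$ of Theorem~\ref{thm:our version of the local clt} together with the sub-Gaussian moment estimate $E_p[|X_j|^{2k}] = O(j^k)$. The only difference is presentational: the paper cites \citep{MR2073332} for all the classical bookkeeping (including the decoupling of the weight $\lambda^{L_{j-1}}$, your step (iii)) and proves only the key estimate, whereas you reproduce more of that classical structure explicitly.
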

\begin{proof}[Proof of \Propref{prop:growth rate of polynomial martingales in weak disorder}]
   We follow the proof of \citep[Proposition 3.2.1]{MR2073332}, which is the corresponding statement in the case when $p$ is the simple random walk. In that proof the fact that $p$ is the simple random walk is only used to obtain the estimate (3.15) \citep[Lemma 3.2.2]{MR2073332}, which, in our notation, states 
\begin{equation}
\label{eq:k'th moment when independent walks intersect}
       E_p^{\otimes 2} \left[ |X_n |^{2k} 1_{X_n = X_n'} \right] 
       \leq C n^{k - d/2}.
\end{equation}

While \eqref{eq:k'th moment when independent walks intersect} is not true for general random walks $p$ with an exponential moment generating function, we show that it holds under the sub-Gaussianity assumption in \eqref{eq:subgaussianity assumption on p}. First, we note that if $p$ is sub-Gaussian, we have the tail estimate $P_p( |X_n| \geq t ) \leq e^{-t^2/(2 C n)}$ and the moment bound
   \begin{equation}
       E_p[ |X_n|^k ] = k \int_0^\infty t^{k-1} P_p( |X_n| \geq t) dt \leq C_k n^{k/2} \quad \forall n.
       \label{eq:gaussian moment estimate for random walks}
   \end{equation}
Using this estimate, we obtain
   \begin{align*}
       E_p^{\otimes 2} \left[ |X_n |^{2k} 1_{X_n = X_n'} \right] 
       & = \sum_{x \in \Z^d}  E_p \left[ |X_n |^{2k} 1_{X_n = x} \right] P_p \left[ X_n' = x \right]\\
       & \leq \frac{C}{n^{d/2}} \sum_{x \in \Z^d}  E_p \left[ |X_n |^{2k} 1_{X_n = x} \right] \\
       & = \frac{C}{n^{d/2}}  E_p \left[ |X_n |^{2k} \right] \\
       & \leq C n^{k - d/2},
   \end{align*}
   where we have used the local limit theorem (Theorem \ref{thm:our version of the local clt}) and \eqref{eq:gaussian moment estimate for random walks}. Note that in the above computations, $C$ is a constant that may change from line to line. Using this estimate, the rest of the argument is identical to \citep{MR2073332} or \citep{MR3444835} and proves Proposition \ref{prop:growth rate of polynomial martingales in weak disorder}.
\end{proof}

\begin{proof}[Proof of Theorem \ref{thm:clt for the endpoint in weak disorder}]
   Next, we construct the family of martingales $\phi_a(n,X_n)$ indexed by $a \in \Z_{\geq 0}^d$ satisfying \eqref{eq:growth condition on the polynomial martingales for the random walk}. Let $a=(a_j)_{j=1}^d$, and 
   $$
   |a|_1 = a_1 + \cdots + a_d, x^a = x_1^{a_1}\cdots x_d^{a_d}.
   $$
   We introduce
   \begin{equation}
       \phi_a(n,x) = \left( \frac{\partial}{\partial \theta} \right)^a \exp\left( \theta \cdot x - n \rho(\theta) \right) \bigg\vert_{\theta = \origin},
       \label{eq:definition of polynomial martingales in weak disorder clt proof}
   \end{equation}
   where $\rho(\theta) = \logmgfrw(\theta)$ is the log moment generating function of $X_1-X_0$, the first step of the random walk with kernel $p$. It is easy to check that $\phi_a(n,X_n)$ is a polynomial martingale that satisfies \eqref{eq:growth condition on the polynomial martingales for the random walk}. With this setup, the proof of Theorem \ref{thm:clt for the endpoint in weak disorder} is identical to \citep[Theorem 3.4]{MR3444835}. In our case, the function $\psi_a(n,x)$ needed in the proof is the Gaussian version of $\phi_a(n,x)$; i.e., $\rho(\theta)$ is replaced by $\hat\lambda(\theta) := (\theta,\Sigma\, \theta)/2$ in \eqref{eq:definition of polynomial martingales in weak disorder clt proof}.

   \notes{This makes some of the statements in \citep{MR3444835} clearer, at least for me, but I'm not sure it's worth including.
   It is enough to prove \Thmref{thm:clt for the endpoint in weak disorder} for monomial functions $f(x) = x^a$. The proof is by induction on $|a|_1$. 
   The base case of $|a|_1=0$ is, of course, trivial. 
   Fix any $a \neq \origin$. By the induction hypothesis, for any $b$ such that $|b|_1 < |a|_1$, we have
   \begin{equation*}
       \la \left( \frac{X_n}{\sqrt{n}} \right) \ra \to \int x^b d\gamma_{\Sigma}(x).
   \end{equation*}
   Our version of $(3.17)$ in \citep{MR2073332} is
   \begin{equation}
     \int \psi_a(1,x) d\gamma_\Sigma(x) = 0    
     \label{eq:our version of 3.17 in comets}
   \end{equation}
   To see this, let $Z$ be a centered Gaussian random variable with covariance matrix $\Sigma$. Then, $e^{\theta Z - n \hat\lambda(\theta)}$ is an analytic function of $\theta$, and we write the Taylor expansion 
   \begin{equation*}
       e^{\theta Z - \hat\lambda(\theta)} = 1 + \sum_{a \in \Z_{\geq 0}^d} \theta^a \psi_a(1,Z).
   \end{equation*}
   Taking expectation and applying Fubini's theorem, we see that \eqref{eq:our version of 3.17 in comets} must hold. As in \citep{MR2073332} and \citep[Lemma 3c]{MR1006293}, we write
   \begin{align}
           \phi_a(n,x) & := x^a + \hat\phi_a(n,x) = x^a + \sum_{|b|_1 + 2j \leq |a|_1, j \geq 1} A_a(b,j) x^b n^j \label{eq:phihat definition in definition of exponential martingales for random walk}\\
           \psi_a(n,x) & := x^a + \hat\psi_a(n,x) = x^a + \sum_{|b|_1 + 2j = |a|_1, j \geq 1} A_a(b,j) x^b n^j \label{eq:psihat definition in definition of exponential martingales for random walk}
   \end{align}
   In \eqref{eq:psihat definition in definition of exponential martingales for random walk}, only the $|b|_1 + 2j = |a|_1$ terms survive since only the second cumulants of the centered Gaussian are nonzero. In such terms, the coefficients $A_a(b,j)$ in both $\hat\psi_a$ and $\hat\phi_a$ match because the covariances of the underlying random walk $p$ and the covariances of the Gaussian are chosen to be equal. 
   Finally, we notice from \eqref{eq:psihat definition in definition of exponential martingales for random walk} that
   \begin{equation}
       \frac{1}{n^{|a|_1/2}} \hat\psi_a(n,x) = \hat\psi_a \left( 1, \frac{x}{\sqrt{n}} \right).
   \end{equation}
   Then,
   \begin{align*}
       \bigg\la \left( \frac{X_n}{\sqrt{n}} \right)^a \bigg\ra 
       & = \frac{1}{n^{|a|_1/2}} E\left[  \left(\phi_a\left( n,X_n \right) - \hat\phi_a\left( n,X_n \right) + \hat\psi_a\left( n,X_n \right) - \hat\psi_a\left( n,X_n \right) \right) \mu(\vx_n)  \right] \\
       & = \frac{1}{n^{|a|_1/2}} E\left[  \left(\phi_a\left( n,X_n \right) - \hat\phi_a\left( n,X_n \right) + \hat\psi_a\left( n,X_n \right) - \hat\psi_a\left( n,X_n \right) \right) \mu(\vx_n)  \right] \\
       & =  \frac{1}{W_n} \frac{M_n}{n^{|a|_1/2}} + \frac{1}{n^{|a|_1/2}} \bigg\la \left( \hat\phi_a \left( n,X_n \right) - \hat\psi_a \left( n,X_n \right) \right) \bigg\ra \\
       & \hphantom{=} - \bigg\la \hat\psi_a\left( 1,\frac{X_n}{\sqrt{n}} \right)  \bigg\ra \\
   \end{align*}
   The first term goes to $0$ from item \ref{it:second moment of martingale is finite} in Proposition \ref{prop:growth rate of polynomial martingales in weak disorder}. The second term is a polynomial with order strictly less than $|a|_1$. So by the induction assumption, each term in that polynomial converges to the Gaussian integral in \eqref{eq:gaussian limit for polynomial functions of endpoint}. Combining this with the observation in \eqref{eq:our version of 3.17 in comets}, we see that  
   \begin{equation}
       \lim_{n \to \infty} - \bigg\la \hat\psi_a\left( 1,\frac{X_n}{\sqrt{n}} \right)  \bigg\ra = \int x^a d\gamma_\Sigma(x).
   \end{equation}
   The integrand in the third term consists entirely of terms of the form $x^b n^j$ where $|b|_1 + 2j < |a|_1$. Thus, again applying \eqref{eq:gaussian limit for polynomial functions of endpoint} for powers of lower order than $|a|_1$, the third term goes to $0$. This completes the proof of the theorem. }
\end{proof}
\section{Strong disorder}
\label{sec:strong disorder}
In this section, we first complete the proof of Theorems \ref{thm:strong disorder statement} and \ref{thm:simultaneosly in weak and strong disorder}. The weak disorder part of Theorem \ref{thm:simultaneosly in weak and strong disorder} (item \ref{item:weak disorder statement}) has been proven in Theorem \ref{thm:weak disorder holds for small h}. The strong disorder part of Theorem \ref{thm:simultaneosly in weak and strong disorder} requires the entropy condition for strong disorder in Theorem \ref{thm:strong disorder statement}, which in turn relies on Proposition \ref{prop:entropy condition for strong disorder}, Lemma \ref{lem:as h goes to infinity the underlying random walk reduces}, and Lemma \ref{lem:simultaneous weak and strong disorder for finite range walks}. 

Proposition \ref{prop:entropy condition for strong disorder} gives a condition for strong disorder that compares a relative entropy of the weights with the Shannon entropy of the underlying random walk. It ought to be interpreted as saying that when the entropy of the weights (impurities) overwhelms the entropy of the underlying random walk (the thermal fluctuations), the polymer is in the strong disorder regime. Then, we prove Lemma \ref{lem:as h goes to infinity the underlying random walk reduces}, which shows that if $p$ has a finite logarithmic moment and the conditions of Theorem \ref{thm:strong disorder statement} are satisfied for some field $h$, then the behavior of the Shannon entropy of the random walk with kernel $q(\lambda h)$ as $\lambda \to \infty$ can be understood. Then, in Lemma \ref{lem:simultaneous weak and strong disorder for finite range walks}, we show that if $p$ has finite range, the favorable $h$ required by Lemma \ref{lem:as h goes to infinity the underlying random walk reduces} exist, and $\entropy(q(\lambda h)) \to 0$ as $\lambda \to \infty$. This shows that there are fields $h$ satisfying the entropy condition in Proposition \ref{prop:entropy condition for strong disorder}, thereby implying high temperature and strong disorder. 

Finally, we show that we cannot draw the same conclusions for Gaussian polymers, since for all $\beta$ small enough and external fields $h$, the polymer is in weak disorder. We also show explicitly that the entropy of the random walk $q(h)$ fluctuates periodically in $h$, is bounded uniformly above and below, and therefore, for all $\beta$ large enough \eqref{eq:gammatwo condition for general random walks} implies strong disorder for all $h \in \R^d$. 

\begin{prop}
   Suppose $\beta > 0$ is such that $\relativeHqp > \entropy(p)$, where $p$ is a (nontrivial) random walk transition probability. Then, the polymer is in the low temperature (and hence strong disorder) regime. 
   \label{prop:entropy condition for strong disorder}
\end{prop}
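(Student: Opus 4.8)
The plan is to bound fractional moments of the normalized partition function $W_n := Z_{n,\beta,p}/\mgf(\beta)^n$ (see \eqref{eq:normalized partition function martingale}), which is precisely the Kahane--Peyri\`ere technique. Since the hypothesis is vacuous unless $\entropy(p)<\infty$, I assume this throughout. I first reduce the claim $F(\beta)<\logmgf(\beta)$ to exponential decay of $\E[W_n^\theta]$ for a single exponent $\theta\in(0,1)$: because $x\mapsto x^\theta$ is concave, Jensen's inequality gives $\E[\log W_n]=\theta^{-1}\E[\log W_n^\theta]\leq\theta^{-1}\log\E[W_n^\theta]$, so a bound $\E[W_n^\theta]\leq\Phi^n$ with $\Phi<1$ yields $n^{-1}\E[\log W_n]\leq\theta^{-1}\log\Phi<0$. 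As $n^{-1}\log Z_{n,\beta,p}\to F(\beta)$ in $L^1$, passing to the limit gives $F(\beta)-\logmgf(\beta)=\lim_n n^{-1}\E[\log W_n]\leq\theta^{-1}\log\Phi<0$, i.e.\ the low temperature regime.

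To estimate the fractional moment, write $W_n=E_p\big[\prod_{i=1}^n e^{\beta\w_{i,X_i}}/\mgf(\beta)\big]$ as a $P_p$-weighted sum over length-$n$ paths $\gamma$ and apply the elementary inequality $(\sum_\gamma a_\gamma)^\theta\leq\sum_\gamma a_\gamma^\theta$, valid for $a_\gamma\geq 0$ and $\theta\in(0,1)$. Taking $\E$ and using that every path visits the $n$ \emph{distinct} space--time sites $(i,\gamma(i))$, so the weights along it are iid, the per-path expectation factorizes to $(\mgf(\theta\beta)/\mgf(\beta)^\theta)^n$ independently of $\gamma$, while the product form of $P_p$ gives $\sum_\gamma P_p(\gamma)^\theta=(\sum_z p(z)^\theta)^n$. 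This yields $\E[W_n^\theta]\leq\Phi(\theta)^n$ with
\[
\Phi(\theta):=\frac{\mgf(\theta\beta)}{\mgf(\beta)^\theta}\sum_{z\in\Z^d}p(z)^\theta .
\]

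It remains to produce $\theta\in(0,1)$ with $\Phi(\theta)<1$. Since $\Phi(1)=1$, it suffices to show $\frac{d}{d\theta}\log\Phi(\theta)\big|_{\theta=1}>0$, for then $\log\Phi$ drops strictly below $0$ as $\theta$ decreases past $1$. Differentiating term by term,
\[
\frac{d}{d\theta}\log\Phi(\theta)\Big|_{\theta=1}=\beta\logmgf'(\beta)-\logmgf(\beta)+\frac{\sum_z p(z)\log p(z)}{\sum_z p(z)}=\relativeHqp-\entropy(p),
\]
where I use \eqref{eq:gammatwo quantity used in strong disorder condition} for the first two terms and \eqref{eq:shannon entropy of transition probability} for the last. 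The hypothesis $\relativeHqp>\entropy(p)$ makes this derivative positive, so $\Phi(\theta)<1$ for $\theta$ slightly below $1$, completing the argument. For the nearest-neighbor walk $\entropy(p)=\log(2d)$, so this recovers Proposition~\ref{prop:condition on gamma 2 for strong disorder}.

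The main technical point, and the only place the infinite-range case intervenes, is justifying the convergence and term-by-term differentiation of $\sum_z p(z)^\theta$ near $\theta=1$. Finiteness for $\theta\in(0,2)$ follows from the standing assumption $\mgfrw_p(t)<\infty$ exactly as in the estimate \eqref{eq:p(X_1)_finite_moment}. Differentiation is legitimate because $\theta\mapsto\sum_z p(z)^\theta$ is a convergent sum of convex functions on the open interval $(0,2)$, hence smooth in its interior, and the differentiated series equals $\sum_z p(z)\log p(z)=-\entropy(p)$, which converges precisely because $\entropy(p)<\infty$.
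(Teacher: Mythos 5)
Your proof is correct and takes essentially the same route as the paper: the Kahane--Peyri\`ere fractional-moment bound $\E[W_n^\theta]\leq r(\theta)^n$ with the identical $r(\theta)=\mgf(\theta\beta)\,\mgf(\beta)^{-\theta}\sum_z p(z)^\theta$, followed by the identical criterion $\frac{d}{d\theta}\log r(\theta)\big|_{\theta=1}=\relativeHqp-\entropy(p)>0$. The differences are cosmetic---a one-shot decomposition over whole paths instead of the paper's one-step recursion, and Jensen plus $L^1$ convergence of $n^{-1}\log Z_{n,\beta,p}$ instead of Markov/Borel--Cantelli to conclude low temperature---though note that your justification of term-by-term differentiation (``a convergent sum of convex functions \dots is smooth'') is not a valid general principle; the standard fix is to observe that $\sum_z p(z)^\theta=E_p\left[e^{(1-\theta)(-\log p(X_1))}\right]$ is a moment generating function finite on the open interval $\theta\in(0,2)$ by \eqref{eq:p(X_1)_finite_moment}, hence analytic near $\theta=1$ with derivative computable inside the expectation.
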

\begin{lem}
   Suppose there is a nonempty set $K \subset \Z^d$ such that $\argmax_{x \in \supp(p)} h \cdot {x} = K \subset \Z^d$. Then,
   \begin{equation}
       \lim_{\lambda \to \infty} \entropy(q(\lambda h)) = \entropy_K(p).
       \label{eq:entropy converges to Hk as lambda goes to infinite}
   \end{equation}
   \label{lem:as h goes to infinity the underlying random walk reduces}
\end{lem}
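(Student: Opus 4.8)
The plan is to normalize the kernel $q(\lambda h,\cdot)$ by its dominant exponential factor, identify its pointwise limit, and then upgrade pointwise convergence of the kernel to convergence of the Shannon entropies via dominated convergence. Write $\alpha := \max_{x \in \supp(p)} h \cdot x$, which is finite and attained precisely on $K$ by hypothesis, and set $w_\lambda(x) := e^{\beta \lambda (h \cdot x - \alpha)}$ and $S_\lambda := \sum_{x \in \supp(p)} p(x) w_\lambda(x) = \mgfrw_p(\beta \lambda h) e^{-\beta \lambda \alpha}$, so that $q(\lambda h, x) = p(x) w_\lambda(x) / S_\lambda$. Since $h \cdot x \leq \alpha$ on $\supp(p)$, we have $w_\lambda(x) \leq 1$, with $w_\lambda(x) = 1$ for $x \in K$ and $w_\lambda(x) \downarrow 0$ for $x \notin K$; moreover $S_\lambda$ is nonincreasing in $\lambda$ and decreases to $p(K) = \sum_{x \in K} p(x) > 0$. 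Consequently $q(\lambda h, x) \to p(x)/p(K)$ on $K$ and $\to 0$ off $K$; that is, $q(\lambda h, \cdot)$ converges pointwise to the conditional law $p(\cdot \mid X_1 \in K)$, and (the key uniform bound) $q(\lambda h, x) \leq p(x)/p(K)$ for every $\lambda \geq 0$.

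The main computation decomposes the entropy using $\log q(\lambda h, x) = \log p(x) + \beta \lambda (h \cdot x - \alpha) - \log S_\lambda$, which gives
\begin{equation*}
    \entropy(q(\lambda h)) = -\sum_{x} q(\lambda h, x) \log p(x) \;+\; \beta \lambda \sum_{x} q(\lambda h, x)(\alpha - h \cdot x) \;+\; \log S_\lambda .
\end{equation*}
The last term tends to $\log p(K)$ by monotone convergence. For the first term, the uniform bound gives the dominating summand $|q(\lambda h, x) \log p(x)| \leq p(x)|\log p(x)|/p(K)$, which is summable precisely because $\entropy(p) < \infty$ (the finite logarithmic moment assumption); dominated convergence then yields $-\sum_x q(\lambda h, x)\log p(x) \to -\sum_{x \in K} \tfrac{p(x)}{p(K)}\log p(x) = \entropy_K(p) - \log p(K)$. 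Adding these two pieces already produces $\entropy_K(p)$, so it remains only to show that the middle term vanishes.

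I expect the middle term to be the sole genuine obstacle, since it is an indeterminate $\infty \cdot 0$ form: off $K$ the factor $q(\lambda h, x)$ decays like $e^{-\beta\lambda(\alpha - h\cdot x)}$ yet is multiplied by $\lambda$. The resolution is the elementary bound $t e^{-t} \leq e^{-1}$: writing $\delta_x := \alpha - h \cdot x > 0$ for $x \notin K$, the summand equals $\beta\lambda\delta_x \, p(x) e^{-\beta\lambda\delta_x}/S_\lambda \leq e^{-1} p(x)/p(K)$, which is summable in $x$ and tends to $0$ for each fixed $x \notin K$ (as $\beta\lambda\delta_x \to \infty$). Dominated convergence then forces the middle term to $0$, and combining the three limits gives $\lim_{\lambda\to\infty}\entropy(q(\lambda h)) = (\entropy_K(p) - \log p(K)) + 0 + \log p(K) = \entropy_K(p)$, as claimed. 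The only hypothesis that must be invoked beyond the standing assumptions is the finiteness of $\entropy(p)$, used to dominate the $\log p(x)$ term; when $K$ is finite—as in the finite-range application—this is automatic and all the sums involved are finite.
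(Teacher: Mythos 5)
Your proof is correct and takes essentially the same route as the paper's: both decompose $\log q(\lambda h,\cdot)$ into $\log p(x)$, the linear-in-$\lambda$ exponent, and the normalization constant, and both kill the dangerous $\lambda e^{-\lambda \delta_x}$ contribution off $K$ via the boundedness of $z e^{-z}$ combined with dominated convergence, using integrability of $|\log p(X_1)|$ for the remaining term. The only cosmetic difference is that the finiteness of $\entropy(p)$, which you list as an extra hypothesis, is in fact automatic from the standing assumption $\mgfrw_p(t)<\infty$ for all $t$, as the paper notes via \eqref{eq:p(X_1)_finite_moment}.
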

\begin{lem}
   Suppose $p$ is nontrivial and has finite range. Then, there exist $h$ such that $K(h)$ is a singleton, and hence $\entropy_{K(h)} = 0$.
   \label{lem:simultaneous weak and strong disorder for finite range walks}
\end{lem}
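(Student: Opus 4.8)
The plan is to reduce the existence of a good field $h$ to an elementary fact of convex geometry: a linear functional in a generic direction is maximized at a unique point of a finite set. Write $\supp(p) = \{z_1,\ldots,z_k\}$, which is finite by the finite-range hypothesis and nonempty (with $k \geq 2$) since $p$ is nontrivial. For any $h \in \R^d$ the maximum of $x \mapsto h\cdot x$ over the finite set $\supp(p)$ is attained, so $K(h) = \argmax_{x \in \supp(p)} h\cdot x$ is always nonempty; the only question is when it is a singleton.

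The key observation is that $K(h)$ fails to be a singleton precisely when two distinct support points $z_i \neq z_j$ both realize the maximum, which forces $h\cdot z_i = h\cdot z_j$, i.e. $h \cdot (z_i - z_j) = 0$. Hence the set of ``bad'' fields is contained in the finite union of hyperplanes
$$\bigcup_{1 \leq i < j \leq k} \{ h \in \R^d \colon h\cdot z_i = h \cdot z_j\},$$
exactly the set on the right of \eqref{eq:defining the codimension one bad set of fields}. Since $z_i \neq z_j$, each $\{h \colon h\cdot(z_i - z_j)=0\}$ is a genuine hyperplane and so has Lebesgue measure zero in $\R^d$. A finite union of measure-zero sets still has measure zero, so its complement is nonempty (indeed of full measure and dense). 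Choosing any $h$ in this complement gives $|K(h)| = 1$.

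Finally, I would record the consequence for the conditional entropy. If $K(h) = \{v\}$ is a singleton, then conditioning on $\{X_1 \in K(h)\}$ forces $X_1 = v$ almost surely, so on this event $p(X_1) = p(v) = p(K(h))$ and the ratio $p(X_1)/p(K(h))$ equals $1$. Therefore $\entropy_{K(h)}(p) = -E_p[\log 1 \mid X_1 \in K(h), X_0 = \origin] = 0$, as claimed.

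I do not anticipate a serious obstacle: the entire argument rests on the standard fact that $\R^d$ cannot be covered by finitely many proper hyperplanes, justified by the measure-zero observation above (a Baire-category argument works equally well). The only point requiring a moment's care is that distinctness $z_i \neq z_j$ is what makes each $\{h \colon h\cdot(z_i - z_j)=0\}$ a proper hyperplane rather than all of $\R^d$, which is automatic. An equivalent route, if one prefers a constructive statement, is to take any vertex of the polytope $\mathcal{U} = \operatorname{conv}(\supp(p))$ and invoke the fact that every vertex of a polytope is exposed by some supporting functional $h$; that $h$ then satisfies $K(h) = \{v\}$ for the chosen vertex $v$.
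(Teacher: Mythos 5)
Your proposal is correct, and it takes a genuinely different route from the paper's. The paper constructs an explicit field: it fixes $h = (h_1,h_2,0,\ldots,0)$ with $h_1/h_2$ irrational and argues that two distinct maximizers $x \neq y$ in $\supp(p)$ would force $h_1 z_1 + h_2 z_2 = 0$ with integers $z_i = x_i - y_i$, contradicting irrationality. You instead argue by genericity: the set of bad fields is contained in the finite union of proper hyperplanes $\{h \colon h \cdot (z_i - z_j) = 0\}$, $1 \leq i < j \leq k$, which has Lebesgue measure zero and hence cannot be all of $\R^d$. Your route buys two things. First, it is more robust: the paper's explicit choice implicitly requires that no two distinct support points agree in their first two coordinates --- if $x \neq y$ but $x_1 = y_1$ and $x_2 = y_2$, then $z_1 = z_2 = 0$, no contradiction arises, and $K(h)$ genuinely contains both points --- whereas your hyperplane argument automatically uses all $d$ coordinates and handles every finite support (a careful repair of the paper's construction would need a direction with coordinates rationally independent across all slots, which is exactly what your genericity supplies). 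Second, your argument directly yields the containment \eqref{eq:defining the codimension one bad set of fields} together with the fact that the good set of fields has full measure, which is the form in which the lemma feeds into Theorem \ref{thm:simultaneosly in weak and strong disorder}. What the paper's approach buys is a concrete field in closed form with no appeal to measure theory; your closing alternative --- exposing a vertex of $\operatorname{conv}(\supp(p))$ by a supporting functional --- is an equally valid third route. Your treatment of the entropy conclusion ($p(X_1)/p(K(h)) \equiv 1$ on the conditioning event, hence $\entropy_{K(h)}(p)=0$) matches the paper's and is complete.
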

Using these three ingredients, we complete the proof of Theorem \ref{thm:strong disorder statement}, and use that to prove Theorem \ref{thm:simultaneosly in weak and strong disorder}. 
\begin{proof}[Proof of Theorem \ref{thm:strong disorder statement}]
   Fix $h \in \R^d$ such that $K(h)$ is nonempty. Lemma \ref{lem:as h goes to infinity the underlying random walk reduces} guarantees that $\entropy(q(\lambda h)) \to \entropy_{K(h)}(p)$ as $\lambda \to \infty$. Then, choose $\lambda$ so large, such that $\relativeHqp > \entropy(q(\lambda h))$, and it follows from Proposition \ref{prop:entropy condition for strong disorder} that the polymer is in the low temperature regime under the external field $\lambda h$.
\end{proof}

\begin{proof}[Proof of Theorem \ref{thm:simultaneosly in weak and strong disorder}]
    The proof of item \ref{item:weak disorder statement} is a direct consequence of Theorem \ref{thm:weak disorder holds for small h}. 

    Item \ref{item:strong disorder holds when p has finite range} follows from Lemmas \ref{lem:as h goes to infinity the underlying random walk reduces} and \ref{lem:simultaneous weak and strong disorder for finite range walks}. If $p$ is nontrivial and has finite range, then for every $\beta > 0$, there exist $h$ such that $K(h)$ is a singleton and hence $\entropy_{K(h)} = 0$. If $\supp(p) = \{z_1,\ldots,z_k\}$, then the set of $h$ such that $K(h)$ is not a singleton is contained inside the set where $\cup_{1 \leq i < j \leq k} \{h \colon h \cdot z_i = h \cdot z_j \}$ (see \eqref{eq:defining the codimension one bad set of fields}). It is clear that $K(\lambda h) = K(h)$ for all $\lambda > 0$, and since $\mgfrw_p(t) < \infty$ for all $t$, $E_p|\log(p(X_1))| < \infty$.

    Since $\relativeHqp$ is a relative entropy of measures, it is nonnegative. Moreover, it is $0$ if and only if $\Prob(\w)$ is a delta mass or if $\beta = 0$. Then, it follows from \eqref{eq:entropy converges to Hk as lambda goes to infinite} that for large enough $\lambda$, $\entropy(q_p(\lambda h)) < \relativeHqp$. Hence, the conditions of Theorem \ref{thm:strong disorder statement} are satisfied and $\lambda h \in \strong_\beta$.
\end{proof}

\begin{proof}[Proof of Proposition \ref{prop:entropy condition for strong disorder}]
    We mimic the classical Kahane-Peyriere argument~\citep{MR3444835} to obtain the entropic condition in \eqref{eq:gammatwo condition for general random walks} that ensures that the polymer is in the low temperature regime; i.e., strict inequality holds in the annealed bound.

Fix $\theta \in (0,1)$ and for this section alone, let us introduce the notation
\begin{equation*}
    Z_{i,j}(x) = E_{p} \left( \exp( \beta H(\vx_{i,j})) | X_i = x \right)
\end{equation*}
where $\vx_{i,j} = (X_i,\ldots,X_j)$ refers to the section of the random walk path from step $i$ to $j$. Using $(x+y)^\theta \leq x^\theta + y^\theta$ when $\theta \in (0,1)$, we get
\begin{align*}
   Z_{0,n}(\origin)^\theta
   & = \left( \sum_x p(x) Z_{1,n}(x) e^{\beta \w_{1,x}} \right)^{\theta}   \\
   & \leq \sum_x p(x)^\theta Z_{1,n}(x)^\theta e^{\theta \beta \w_{1,x}}. 
\end{align*}
Normalizing the partition function, taking expectation, and using independence, we get
\begin{align*}
    \E W_n^\theta \leq \sum_x p(x)^\theta \frac{\E[Z_{1,n}(x)^\theta]}{\mgf(\beta)^{(n-1)\theta}} \frac{\E[ e^{\theta \beta \w_{1,x}}]}{\mgf(\beta)^\theta} = r(\theta) \E[ W_{n-1}^\theta ],
\end{align*}
where
\begin{equation}
    r(\theta) = \frac{\mgf(\theta \beta)}{\mgf(\beta)^\theta} \sum_x p(x)^\theta.
    \label{eq:r-theta the function that appears in the strong disorder proof}
\end{equation}
We will show $r(\theta) < 1$ next, from which it will follow that 
\begin{equation*}
    \frac{1}{n} \log \E[ W_n^\theta ] \leq \log r(\theta) < 0.
\end{equation*}
Then, the Markov inequality and Borel-Cantelli lemma imply the almost sure exponential decay of $W_n$ to $0$. \notes{$\Prob(W_n > n^{-1}) \leq \E[ W_n^\theta] n^\theta$.}

To show $r(\theta) < 1$, we first show that the function $\log r(\theta)$ is convex. Since $\log r(\theta) = -\theta \logmgf(\beta) + \logmgf(\theta \beta) + \log  \sum_x p(x) \exp( (\theta - 1) \log p(x))$, it is a sum of a term linear in $\theta$, and two terms of the form $\log E[ \exp( \theta X + Y) ]$, where $E$ is either $\E$ or $E_p$, and $X$ and $Y$ are the appropriate random variables. Differentiating with respect to $\theta$, we see that 
\begin{align*}
    \frac{d}{d\theta} \log E[ \exp( \theta X + Y) ] & = \frac{E[ X \exp( \theta X + Y) ]}{E[ \exp( \theta X + Y) ]},\\
    \frac{d^2}{d\theta^2} \log E[ \exp( \theta X + Y) ] & = \frac{E[ X^2 \exp( \theta X + Y) ]}{E[ \exp( \theta X + Y) ]} - \frac{E[ X \exp( \theta X + Y) ]^2}{E[ \exp( \theta X + Y) ]^2}. 
\end{align*}
Thus the first and second derivatives are the mean and variance of the random variable $X$ with respect to a weighted measure, and this proves that the second derivatives are nonnegative. Note that $\log r(0) = \log \sum_x p(x)^0 = \log (\supp(p)) > 0$ if the underlying random walk is nontrivial, and that $\log r(1) = 0$. Thus, we would only be able to find $\theta$ such that $r(\theta) < 1$ if $d \log r(\theta)/d\theta |_{\theta = 1} > 0$. This gives us the condition
\begin{align*}
    \frac{d}{d\theta} \log r(\theta) \big|_{\theta = 1} 
    & = \beta \logmgf'(\theta \beta) - \logmgf(\beta) + \frac{\sum_x p(x)^\theta \log p(x)}{\sum_x p(x)^\theta} \Bigg|_{\theta = 1}\\
    & = \beta \logmgf'(\beta) - \logmgf(\beta) - \entropy(p)=\relativeHqp-\entropy(p)>0,
\end{align*}
where in the last equality we used \eqref{eq:gammatwo quantity used in strong disorder condition}.
In the standard nearest-neighbor random walk case, $\entropy(p) = \log 2d$, and this gives the classical condition for low temperature (see Proposition \ref{prop:condition on gamma 2 for strong disorder}). 
\end{proof}

\begin{proof}[Proof of Lemma \ref{lem:as h goes to infinity the underlying random walk reduces}]
   Let $k = \max_{x \in \supp(p)} h \cdot x$. Then, the dominated convergence theorem gives us that
   \begin{align*}
       \lim_{\lambda \to \infty} E_p[ e^{\beta \lambda (h\cdot X_1 - k)} 1_{ X_1 \in K^c } ] & = 0.
   \end{align*}
   The latter can be written more succinctly as $E_p[ e^{\beta \lambda h \cdot X_1} ] = e^{\beta \lambda k} p(K)(1 + o(1) )$. Then, as $\lambda \to \infty$,
   \begin{align*}
      \entropy(q(\lambda h))
      & = - E_p \left[ \frac{e^{\beta \lambda h \cdot X_1}}{E_p[ e^{\beta \lambda h \cdot X_1} ]} \log\left(  p(X_1) \frac{e^{\beta \lambda h \cdot X_1}}{E_p[ e^{\beta \lambda h \cdot X_1} ] }  \right) \right],\\
      & = - E_p \left[ \frac{1}{( p(K) + o(1) )} \left( \log \frac{p(X_1)}{p(K)} + o(1) \right) 1_{X_1 \in K} \right] \\
      & \quad - E_p \left[ \frac{e^{\beta \lambda (h \cdot X_1 - k)}}{( p(K) + o(1) )} \left( \log \frac{p(X_1)}{p(K)} + o(1) \right) 1_{X_1 \in K^c} \right] \\
      & \quad - E_p \left[ \frac{e^{\beta \lambda (h \cdot X_1 - k)}}{( p(K) + o(1) )}   \beta \lambda (h \cdot X_1 - k) 1_{X_1 \in K^c} \right] \\
      & = \entropy_K(p) + o(1),
  \end{align*}
  where in the last equality we have applied the dominated convergence theorem using the integrability of $|\log p(X_1)|$ for in the second line, and the boundedness of the function $ze^{-z}$ when $z\geq 0$ for the last term. Note that the integrability of $|\log p(X_1)|$ follows from \eqref{eq:p(X_1)_finite_moment} since \eqref{eq:p(X_1)_finite_moment} shows that $\log p(X_1)$ has a moment generating function of positive radius.
\end{proof}

\begin{proof}[Proof of Lemma \ref{lem:simultaneous weak and strong disorder for finite range walks}]
    Fix $h = (h_1,h_2,0,\ldots,0)$ such that $h_1/h_2$ is a finite irrational. Since $\supp(p)$ has finite cardinality, there must be a unique $x \in \supp(p)$ where $h \cdot x$ takes its maximum. Suppose not; then, there must exist integers $z_1,z_2$ (wlog $z_2 \neq 0$) such that $\sum_{i=1}^2 h_i z_i = 0$, or $z_2 = -(h_1/h_2)z_1$. This is a contradiction since $h_1/h_2$ was assumed to be irrational. Therefore, the cardinality of $K(h)$ is $1$, and consequently $\entropy_{K(h)}(p) = 0$. 
\end{proof}

\subsection{Gaussian random walk}
\label{sec:gaussian random walk}

In this section, we discuss the case where $p(x)=Ce^{-|x|^2/2}$ for $x \in \Z^d$, $d\geq 3$, where the normalization constant $C^{-1}=\sum_xe^{-|x|^2/2}$ is chosen so $p$ is a probability distribution.
\begin{proof}[Proof of Proposition \ref{prop:gaussian random walk polymer in weak disorder for all h}]
 By Theorem \ref{thm:original l2 theorem for polymer}, if we show that there exists $c<1$ such that $\pi(q(h))<c$ for all $h$, it will follow that if $\beta$ is chosen so that $\mgf_2(\beta)<c^{-1}$, the polymer will be in the weak disorder regime for all $h$.

The quantity $\pi(q(h))$ only depends on the distribution of $X-X'$, where $X$ and $X'$ are independent random variables with distribution $q(h)$. We have

\begin{align*}
P_{q(h)}(X-X'=z)&=\sum_{w\in\Z^d}\frac 1{C_h}e^{-\frac{|z+w|^2}{2}+\beta h\cdot (z+w)} \frac 1{C_h}e^{-\frac{|w|^2}{2}+\beta h\cdot w}
\\&=\frac{1}{C_h^2}e^{|\beta h|^2}e^{-\frac{|z|^2}2}\sum_{w\in\Z^d}e^{-\left|w-\left(\beta h-\frac z2\right)\right|^2}
\\&=C'_he^{-\frac{|z|^2}2}\sum_{w\in\Z^d-\beta h}e^{-\left|w+\frac z2\right|^2},
\end{align*}
where $C_h$ is the normalization constant to get a probability measure and $C'_h={e^{|\beta h|^2}}/{C_h^2}$. Notice that the sum in the last display depends on $h$ only through the elementwise fractional part $\{\beta h\}$ of the vector $\beta h$, so the distribution of $X-X'$ depends only on $\{\beta h\}$ as well. In the proof of Theorem \ref{thm:weak disorder holds for small h}, it is shown that $\pi(q(h))$ is continuous in $h$ (the argument there shows continuity at $0$, but changing the base measure from $p$ to $q(h_0)$ gives continuity at all $h_0$'s as well). Since $d\geq 3$, for any fixed $h$ we have $\pi(q(h))<1$. By continuity of $\pi(q(h))$ and compactness of $\beta^{-1} [-1,1]^d$, we get a constant $c<1$ such that $\pi(q(h))<c$ for all $h\in \beta^{-1} [-1,1]^d$. Since $\pi(q(h))$ depends on $h$ only through the fractional part $\{\beta h\}$, we get $\pi(q(h))<c<1$ for all $h\in\R^d$ as needed.

To show the statement about strong disorder, we obtain explicit entropy estimates for any dimension $d$. First, we obtain them in $d=1$, and note that for $d \geq 1$, since $p$ is of the form $p(x)=p_1(x_1)\dots p_d(x_d)$, where $p_1,\dots,p_d$ are $d$ copies of the $1$-dimensional discrete Gaussian, $\entropy(p)=\entropy(p_1)+\dots +\entropy(p_d)=d \entropy(p_1)$. In other words, entropy tensorizes over product measures.

Setting $t=\lambda h$, $C_0(t)=\sum_{x\in\mathbb{Z}-t} e^{-x^2/2}$ and $C_2(t)=\sum_{x\in\mathbb{Z}-t} x^2e^{-x^2/2}$, we have
$$
    E_p(e^{\lambda h  X })=C\sum_{x\in\mathbb{Z}}e^{tx-x^2/2}=Ce^{t^2/2}\sum_{x\in\mathbb{Z}}e^{-(x-t)^2/2}=Ce^{t^2/2}C_0(t).
$$
Thus,
\begin{align}
\entropy(q(\lambda h))=& -\sum_x\frac{e^{\lambda h  x }}{E_p(e^{\lambda h  X })}p(x)(\ln p(x)+\lambda h  x -\ln E_p(e^{\lambda h  X }))
\nonumber \\=&\ln C_0(t)-\sum_x\frac{e^{-(x-t)^2/2}}{C_0(t)}\left(-\frac{(x-t)^2}2\right)
\nonumber \\=&\ln C_0(t)+\frac{C_2(t)}{2C_0(t)}. \label{eq:exact entropy expression for gaussian random walk}
\end{align}

Note, that the dependence of $C_0$ and $C_2$ on $t$ is only in the set over which they are summed. If $t$ ranges over the integers, we have $\mathbb{Z}-t=\mathbb{Z}$, so $C_0(t)$ and $C_2(t)$ are constants independent of $t$. Both are super-exponentially decaying series, and so can easily be approximated by the first few terms. In fact, both are approximately equal to $2.5$, giving $\entropy(q(t))\approx 1.4$. 

For non-integer $t$'s we estimate as follows. First, both $C_0(t)$ and $C_1(t)$ are periodic with period $1$, so for our estimates, without loss of generality, we assume $0<t<1$. In fact, $C_0(t)=C_0(1-t)$ and $C_2(t)=C_2(1-t)$, so we can further assume $0<t\leq 1/2$. For $n \in \Z$, using the inequalities $e^{-(n+t)^2/2}\geq e^{-n^2/2}$ for $n\leq -1$, and $e^{-(n+t)^2/2}>e^{-(n+1)^2/2}$ for $n\geq 2$, we obtain 
$$C_0(t)\geq e^{-t^2/2}+e^{-(t+1)^2/2}-e^0-e^{-1/2}-e^{-2^2/2}+C_0(0).$$ 
The right-hand side is minimized when $t=1/2$ giving $C_0(t)>C_0(0)-0.54$. Similarly, we can get $C_0(t)<C_0(0)+0.51$ giving us $1.96<C_0(t)<3.04$.  Arguing in the same way, we get the bounds $2.08<C_2(t)<2.93$, and therefore for $d \geq 1$, using \eqref{eq:exact entropy expression for gaussian random walk} and the tensorization of the entropy, we get  
$$1.01d<\entropy(q_p(t))<1.85d.$$
It follows that if $\beta$ is small enough such that $\relativeHqp <1.01d$, Theorem \ref{thm:strong disorder statement} cannot guarantee the existence of strong disorder.

However, the proof of Corollary 2.1 of \cite{MR3444835} shows that $\relativeHqp \to \infty$ as $\beta \to \infty$. Therefore, for all large enough $\beta$ and all $h \in \R^d$, the condition in \eqref{eq:gammatwo condition for general random walks} holds and implies strong disorder. 
\arjun{Originally, we had explicit computations for all the constants. We  establish $C_2(0)+0.43>C_2(t)>C_2(0)-0.41$ just like we did for $C_0(t)$. Combining these estimates, we get that for any $t$ we have $$1.01<\ln 1.96+\frac{2.08}{2\cdot 3.04}<\entropy(q_p(t))<\ln 3.04+\frac{2.93}{2\cdot 1.96}<1.85.$$}
\arjun{However, if $\sup_t \entropy(q_p(t)) < \entropy(\mathbb{Q}_\beta | \Prob)$, can we ensure the existence of simultaneous strong and weak disorder?} 
\end{proof}

\section{Monotonicity}
\label{sec:monotonicity}
In this section, we prove Theorem \ref{thm:monotonicity}, which says that if $h$ $\in \strong_{\beta_0}$, then $h\beta_0/\beta \in \strong_\beta$ for all $\beta > \beta_0$. Recall the definition of $\gpl{h}$ in \eqref{eq:gpl definition} and $\logmgfrw(\beta h)$ just below \eqref{eq:annealed bound for gpl}. Let 
\begin{equation}
   \hat{g}_n(\beta,h) = \frac{1}{n} \log E_{q(h/\beta)} \left[ \exp( \beta H(\vx_n) ) | X_0 = \origin \right] + \logmgfrw(h).
   \label{eq:ghat definition in the monotonicity proof}
\end{equation}
Note that $q(h/\beta)$ is actually independent of $\beta$, and the only dependence on $\beta$ in the first term in \eqref{eq:ghat definition in the monotonicity proof} is in $\beta H(\vx_n)$. It is clear that $\lim_{n \to \infty} \hat{g}_n(\beta, h) =: \hat{g}(\beta,h) = \beta \gpl{h/\beta}$ in $L^1$ and $\Prob-\almostsurely$.
\begin{prop}
    The function $\beta \mapsto \hat{g}(\beta,h) - \logmgf(\beta) - \logmgfrw(h)$ is nonincreasing on $\R_{\geq 0}$ and nondecreasing on $\R_{\leq 0}$.
    \label{prop:monotonicity result using for gpl}
\end{prop}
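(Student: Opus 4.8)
The plan is to first make an observation that collapses the statement to a classical one, and then to reprove that classical monotonicity by adapting the Comets--Yoshida FKG argument \citep{MR2271480}. The key observation is that $q(h/\beta)$ does not depend on $\beta$: substituting $h \mapsto h/\beta$ into \eqref{eq:transition probability in direction h} gives $q_{\beta,p}(h/\beta,x) = e^{x \cdot h} p(x) / \mgfrw_p(h)$, a fixed kernel I will call $\tilde q$. Hence all the $\beta$-dependence in $\hat g_n$ lives in the factor $\beta H(\vx_n)$, and
\begin{equation*}
    \hat g_n(\beta,h) - \logmgf(\beta) - \logmgfrw(h) = \frac1n \log \frac{ E_{\tilde q}[ \exp(\beta H(\vx_n)) \mid X_0 = \origin ] }{ \mgf(\beta)^n } = \frac1n \log W_n(\beta),
\end{equation*}
where $W_n(\beta) := Z_{n,\beta,\tilde q}/\mgf(\beta)^n$ is the normalized partition function of a \emph{standard} directed polymer whose underlying walk is the fixed kernel $\tilde q$. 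Thus the Proposition is exactly the classical statement that the normalized free energy $\beta \mapsto \lim_n \tfrac1n \log W_n(\beta)$ of a directed polymer is nonincreasing on $\R_{\geq 0}$ and, by symmetry, nondecreasing on $\R_{\leq 0}$.

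To prove this, I would pass to the annealed, finite-$n$ level. Since $\hat g_n \to \hat g$ in $L^1$, the functions $\tfrac1n \E[\log W_n(\beta)]$ converge pointwise to $\Phi(\beta) := \hat g(\beta,h) - \logmgf(\beta) - \logmgfrw(h)$, and a pointwise limit of nonincreasing functions is nonincreasing; so it suffices to show each $\beta \mapsto \tfrac1n \E[\log W_n(\beta)]$ is nonincreasing on $\R_{\geq 0}$. Differentiating under $\E$ (justified by \eqref{eq:assumptions on both mgfs}),
\begin{equation*}
    \frac{d}{d\beta} \E[\log W_n(\beta)] = \sum_{i=1}^n \Big( \E[ \la \w_{i,X_i} \ra_{n,\beta,\tilde q} ] - \logmgf'(\beta) \Big),
\end{equation*}
so the crux is the termwise correlation inequality $\E[ \la \w_{i,X_i} \ra_{n,\beta,\tilde q} ] \leq \logmgf'(\beta) = \E_{\mathbb{Q}_\beta}[\w]$ for every $i$ and every $\beta \geq 0$.

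The heart of the argument is this inequality. Fixing $i$, I would condition on $\mathcal G_i := \sigma(\w_{j,x} : j \neq i)$; writing $a_x := E_{\tilde q}[\exp(\beta \sum_{j \neq i} \w_{j,X_j}) 1_{\{X_i = x\}}] \geq 0$ (which is $\mathcal G_i$-measurable and independent of the time-$i$ weights), one has $\la \w_{i,X_i} \ra_n = \sum_x \w_{i,x} \nu(x)$ with $\nu(x) = a_x e^{\beta \w_{i,x}} / \sum_y a_y e^{\beta \w_{i,y}}$ and $\sum_x \nu(x) = 1$. The obstacle is that this self-normalized average is \emph{not} a monotone function of the time-$i$ weights, so FKG cannot be applied to it directly. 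The device I would use to get around this is, for each fixed $x$, to tilt the single weight $\w_{i,x}$ by $e^{\beta \w_{i,x}}/\mgf(\beta)$ (passing to $\mathbb{Q}_\beta$) and to condition additionally on $D_{-x} := \sum_{y \neq x} a_y e^{\beta \w_{i,y}}$. Under the tilt $\w_{i,x}$ has mean $\logmgf'(\beta)$ and is independent of $D_{-x}$, while as a function of the single scalar $\w_{i,x}$ the factor $1/(a_x e^{\beta \w_{i,x}} + D_{-x})$ is decreasing. The one-dimensional Chebyshev (baby-FKG) correlation inequality for an increasing and a decreasing function of one variable then gives, after integrating out $D_{-x}$ and multiplying through by $a_x \mgf(\beta)$,
\begin{equation*}
    \E[ \w_{i,x} \nu(x) \mid \mathcal G_i ] \leq \logmgf'(\beta)\, \E[ \nu(x) \mid \mathcal G_i ].
\end{equation*}
Summing over $x$ and using $\sum_x \nu(x) = 1$ yields $\E[ \la \w_{i,X_i} \ra_n \mid \mathcal G_i ] \leq \logmgf'(\beta)$, and taking expectations finishes the termwise bound.

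Summing over $i$ gives $\tfrac{d}{d\beta} \E[\log W_n(\beta)] \leq 0$ for $\beta \geq 0$, so $\tfrac1n \E[\log W_n]$ is nonincreasing there, and letting $n \to \infty$ yields the claim on $\R_{\geq 0}$. For $\beta \leq 0$ the same computation applies, except that $a_x e^{\beta \w_{i,x}}$ and hence $1/(a_x e^{\beta \w_{i,x}} + D_{-x})$ become \emph{increasing} in $\w_{i,x}$; the correlation inequality reverses, giving $\E[ \la \w_{i,X_i} \ra_n ] \geq \logmgf'(\beta)$ and thus the nondecreasing conclusion. The main obstacle is precisely the correlation inequality of the previous paragraph: the naive FKG fails because the self-normalized energy density is not monotone in the environment, and it is the tilt-and-condition reduction to a one-dimensional correlation inequality that rescues the argument.
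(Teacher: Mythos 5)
Your proof is correct, and while it rests on the same two pillars as the paper's --- the observation that $q(h/\beta)$ is a fixed kernel $\tilde q$, so the statement reduces to monotonicity of $\beta \mapsto \tfrac1n \E[\log W_n(\beta)]$ for a standard polymer over $\tilde q$, followed by a correlation inequality after exponential tilting --- the key step is implemented by a genuinely different route. The paper follows the Comets--Yoshida scheme: it exchanges $\E$ and $E_{q(h/\beta)}$, tilts the \emph{entire} environment along a fixed path by $\exp(\beta H(\vx_n) - n \logmgf(\beta))$, applies the multivariate FKG (Harris) inequality for product measures to the pair $H(\vx_n)$ (increasing in the weights) and $Z_{n,\beta,q(h/\beta)}^{-1}$ (decreasing), and then resums over paths using $E_{q(h/\beta)}[e^{\beta H(\vx_n)}] = Z_{n,\beta,q(h/\beta)}$. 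You instead decompose $\la H(\vx_n) \ra_n = \sum_i \la \w_{i,X_i} \ra_n$, condition on the weights outside time slice $i$, write the Gibbs average as the self-normalized sum $\sum_x \w_{i,x} \nu(x)$, and handle each site separately by tilting the single weight $\w_{i,x}$ to $\mathbb{Q}_\beta$ and applying the one-dimensional Chebyshev correlation inequality conditionally on $D_{-x}$. Your argument is more elementary --- it uses only the two-point correlation inequality for monotone functions of one real variable rather than FKG for product measures --- and it proves the termwise bound $\E[\la \w_{i,X_i} \ra_n] \leq \logmgf'(\beta)$ for each $i$, which is slightly more refined than the aggregate bound $\E[\la H(\vx_n) \ra_n] \leq n \logmgf'(\beta)$ that the paper obtains; it also makes explicit exactly where naive FKG breaks down (the self-normalized energy density is not monotone in the environment). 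What the paper's route buys in exchange is brevity: one application of FKG handles the whole path energy at once. One technical caveat, though it is at the same level of rigor as the paper's own proof: when $\supp(p)$ is infinite, your interchange of the sum over sites $x$ with the conditional expectation given $\mathcal{G}_i$, and the differentiation under $\E$, require an absolute-integrability justification (e.g., via Tonelli and a crude bound on $\sum_x a_x/D_{-x}$), just as the paper's exchange of $\E$ with $E_{q(h/\beta)}$ and its differentiation under the integral sign do.
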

The proof of the proposition follows the clever application of the FKG inequality used in \cite[Theorem 3.2(b)]{MR2271480}.

\begin{proof}
We will show that for fixed $h$, $\beta \to \E[ \hat{g}_n(\beta,h)] - \logmgf(\beta) - \logmgfrw(h)$ is nonincreasing on $\R_{\geq 0}$ and nondecreasing on $\R_{\leq 0}$. Taking $n \to \infty$ produces the result we need. Differentiating
with respect to $\beta$, we get
\begin{equation}
    \E[ \partial_\beta \hat{g}_n(\beta,h) ] = \frac1{n} \E \left[ \frac{ E_{q(h/\beta)}[ H(\vx_n) \exp(\beta H(\vx_n))]}{Z_{n,\beta,q(h/\beta)}} \right].
    \label{eq:equation for partial deriv of hat g wrt beta}
\end{equation}
For any fixed random walk path $\vx_n$, let the probability measure $\hat{\Prob}(\{\w_x\}_{x \in \Z^d})$ be the measure defined by
\begin{equation}
    \int f(\w) \hat{\Prob}(d \w) = \int f(\w) \exp(\beta H(\vx_n) - n \logmgf(\beta)) \Prob(d \w),
\end{equation}
and let $\hat{\E}$ be the corresponding expectation.

It is easy to see that $\hat{\Prob}$ is a product measure. Note that when $\beta>0$, $H(\vx_n)$ and $Z_{n,\beta,q(h/\beta)}$ are both increasing functions of the weights $\{\w_x\}_{x \in \Z^d}$. Then, using the FKG inequality for the measure $\hat{\Prob}$ as in \citep[pp.\,1755]{MR3444835}, we get
\begin{align*}
   E_{q(h/\beta)} \E \Bigg[ & \frac{  H(\vx_n) \exp(\beta H(\vx_n) - n\logmgf(\beta))}{Z_{n,\beta,q(h/\beta)}} \Bigg] \mgf(\beta)^n  
   =  E_{q(h/\beta)} \hat{\mathbb{E}} \left[ \frac{  H(\vx_n) }{Z_{n,\beta,q(h/\beta)}} \right]  \mgf(\beta)^n \\
   & \leq E_{q(h/\beta)} \bigg[ \E [ H(\vx_n) \exp(\beta H(\vx_n) - n\logmgf(\beta))]  \\ 
   & \qquad \times \E [ Z_{n,\beta,q(h/\beta)}^{-1} \exp(\beta H(\vx_n) - n\logmgf(\beta))]  \bigg] \mgf(\beta)^n\\
   & =  n \logmgf'(\beta). 
\end{align*}
Plugging this back into \eqref{eq:equation for partial deriv of hat g wrt beta}, we see that
\begin{equation*}
    \partial_\beta (\E[ \hat{g}_n(\beta,h)] - \logmgf(\beta) -\logmgfrw(h)) \leq 0.
\end{equation*}
In the case $\beta<0$, the partition function $Z_{n,\beta,q(h/\beta)}$ is a decreasing function of the weights $\{\w_x\}_{x \in \Z^d}$ so the inequalities above are reversed. Taking $n \to \infty$ completes the proof.
\end{proof}

\begin{proof}[Proof of Theorem \ref{thm:monotonicity}]
   Suppose $(h,\beta_0)$ is in low temperature for some $\beta_0 > 0$. Then,
       $\hat{g}(\beta_0,h \beta_0) < \logmgf(\beta_0) + \logmgfrw(h \beta_0),$ and it
   follows from Proposition \ref{prop:monotonicity result using for gpl} that
   \begin{equation}
       \hat{g}(\beta,\beta_0 h) < \logmgf(\beta) + \logmgfrw(\beta_0 h) \quad \forall \beta > \beta_0.
   \end{equation}
   This implies that $\beta \gpl{\beta_0 h/\beta} < \logmgf(\beta) + \logmgfrw(\beta_0 h)$, and thus $(\beta,\beta_0 h/\beta)$ is in low temperature.
\end{proof}

\section{Numerical Results}
\label{sec:experiments}
In this section, we present some numerical experimental results in the $d=1$ and $d=3$ cases where the weights are iid $\Uniform[0,1]$ random variables, $p$ is the nearest-neighbor simple random walk, and $\beta=1$ throughout. Figure \ref{fig:gpl for different values of beta} clearly showed the phase transition in $d=3$ as $\gpl{h}$ diverged away from the annealed bound when $h$ was increased. We were curious to see if the phase transition appeared at the level of fluctuations in numerical simulations. So we simulated the growth exponents for the standard deviation of the log partition function and several other quantities related to the fluctuation of the endpoint. They are
\begin{enumerate}
    \item $\sqrt{\mathbb E [ \log(Z_n)^2] - \mathbb E[\log(Z_n)]^2}$, the variance of the log partition function.  
    \item $\E \left[  \la |X_n|^2\ra - \la |X_n|\ra^2  \right]$, the annealed variance of the endpoint under the Gibbs measure.
    \item $\sqrt{\E [ \la |X_n|_2^2 \ra]  - \E[ \la |X_n| \ra]^2}$, the standard deviation of the endpoint under the annealed Gibbs measure. 
    \item \label{item:argmax of the annealed gibbs measure} $\sqrt{\E[|A_n|^2] - \E[|A_n|]^2 }$, where $A_n = \argmax_{|z|_1 = n} \la 1_{X_n = z} \ra$. This is the standard deviation of the location of the maximum of the Gibbs measure. 
\end{enumerate}
We simulated these statistics for four different parameter combinations.
\begin{itemize}
\item $d=1, h=0, n=10000, 1000$ samples of the environment
\item $d=1, h=3, n=10000, 1000$ samples of the environment
\item $d=3, h=0, n=400, 100$ samples of the environment
\item $d=3, h=3, n=400, 100$ samples of the environment
\end{itemize}
The simulations for $d=3$ were constrained by memory and runtime. If $t(n)$ is the quantity of interest, we determined the growth exponent by fitting a line to the graph of $\log(t(k))$ versus $\log(k)$ for $k=1,\ldots,n$ and determining the slope. The results appear in Table \ref{table:various endpoint and partition function fluctuation statistics}.
\begin{table}
\begin{center}
\begin{tabular}{ |c|cc|cc|c| } 
\hline
\multirow{2}{*}{Statistic} & \multicolumn{2}{c|}{$d=1$} & \multicolumn{2}{c|}{$d=3$} & \multirow{2}{*}{Exponent} \\
\cline{2-5}
 & $h=0$  & $h=3$ & $h=0$  &  $h=3$ & \\ 
 \hline
 $\sqrt{\mathbb E [ \log(Z_n)^2] - \mathbb E[\log(Z_n)]^2}$  & 0.32 & 0.35 & 0.03  & 0.05 & $\chi$ \\ 
 \hline
 $\sqrt{\E \left[  \la |X_n|^2\ra - \la |X_n|\ra^2  \right]}$ & 0.51  & 0.49 & 0.49 & 0.51 & 1/2 \\ 
 \hline
$\sqrt{\E [ \la |X_n|_2^2 \ra]  - \E[ \la |X_n| \ra]^2}$   & 0.59 &  0.63  & 0.49 & 0.51 & $\xi$ \\ 
 \hline
$\sqrt{\E[|A_n|^2] - \E[|A_n|]^2 }$   & 0.65 &  0.67  & 0.41 & 0.46 & $\xi$ \\ 
 \hline
\end{tabular}
\end{center}
\caption{Various endpoint and partition function fluctuation statistics}
\label{table:various endpoint and partition function fluctuation statistics}
\end{table}

The variance of the log partition function is thought to grow like $n^{\chi}$, where $\chi$ is called the fluctuation exponent. For the special log-gamma polymer in $d=1$, $\chi$ is known to be  $2/3$ \citep{MR2917766}, but this is expected to be true in much wider generality; this is the KPZ universality conjecture. For $d \geq 3$, when $\beta$ is in the $L^2$ portion of the weak-disorder regime, it is clear that the fluctuations are $O(1)$, and thus $\chi = 0$. Little is known or conjectured about $\chi$ outside of these regimes.

In the physics literature, \citet{huse_henley_1985} conjectured that the annealed variance of the endpoint under the Gibbs measure grows linearly. This was also seen in the numerical experiments in \citep{MR4402225}. Our results show that this is indeed the case in  $d=1$ and in $d=3$ in both the weak and strong disorder regimes. This is related to the non-zero and bounded curvature of $\gpl{h}$.

In $d=1$, again, physical arguments and numerical simulations suggest that the standard deviation of the argmax of the Gibbs measure grows with transversal fluctuations exponent $\xi = 2/3$. Related quantities have been computed for the special log-gamma polymer: Sepp\"al\"ainen considered the point-to-point polymer, and showed that the transversal fluctuation exponent at intermediate times is bounded above by $2/3$ \citep[Theorem 2.5]{MR2917766}. Some fluctuation results have also been proved for the half-space log-gamma polymer \citep{barraquand2023kpz}.

In weak disorder in $d \geq 3$, these fluctuations ought to be diffusive and grow with exponent $1/2$ at least when $h=0$, which certainly ought to be in the weak disorder phase. However, our simulated growth exponents are a bit smaller than $1/2$. This is probably because $n$ and the number of samples are both too small, and so these results ought not to be taken seriously.

Given the results of item \ref{item:argmax of the annealed gibbs measure} in $d=1$, if the annealed Gibbs measure on the endpoint assigns an $O(1)$ probability to a region around its maximum, the standard deviation of the endpoint under the annealed Gibbs measure should fluctuate with at least exponent $2/3$. However, our results (especially the one for $d=1,h=0$) suggest that the standard deviation grows with an exponent significantly smaller than $2/3$, which implies that the annealed Gibbs measure on the endpoint concentrates quite sharply around its maximum. 

The results for $d=3$ suggest that the standard deviation of the endpoint under the annealed Gibbs measure is diffusive, which is pretty counterintuitive. This again suggests that $n=500$ is probably too low for the purposes of computing the fluctuation exponents in $d=3$.
\graphicspath{ {./Figures/} }

\printbibliography


\end{document}